\newtheorem{thm}{Theorem}[section]
\newtheorem{prop}[thm]{Proposition}
\newtheorem{lem}[thm]{Lemma}
\newtheorem{cor}[thm]{Corollary}
\theoremstyle{definition}
\newtheorem{ques}[thm]{Question}
\newtheorem{defn}[thm]{Definition}
\theoremstyle{remark}
\newtheorem{remk}[thm]{Remark}
\newtheorem{remks}[thm]{Remarks}
\newtheorem{exm}[thm]{Example}
\newtheorem{exms}[thm]{Examples}
\newtheorem{notat}[thm]{Notation}
\numberwithin{equation}{section}
\newcommand{\thmref}{Theorem~\ref}
\newcommand{\propref}{Proposition~\ref}
\newcommand{\corref}{Corollary~\ref}
\newcommand{\lemref}{Lemma~\ref}
\newcommand{\sC}{{\mathcal C}}
\newcommand{\sI}{{\mathcal I}}
\newcommand{\sK}{{\mathcal K}}
\newcommand{\sO}{{\mathcal O}}
\newcommand{\sR}{{\mathcal R}}
\newcommand{\sZ}{{\mathcal Z}}
\newcommand{\A}{{\mathbb A}}
\newcommand{\C}{{\mathbb C}}
\newcommand{\F}{{\mathbb F}}
\newcommand{\Q}{{\mathbb Q}}
\newcommand{\R}{{\mathbb R}}
\newcommand{\Z}{{\mathbb Z}}
\newcommand{\fm}{{\mathfrak m}}
\newcommand{\fp}{{\mathfrak p}}
\newcommand{\fq}{{\mathfrak q}}
\newcommand{\CH}{{\rm CH}}
\newcommand{\surj}{\twoheadrightarrow}
\newcommand{\inj}{\hookrightarrow}
\newcommand{\red}{{\rm red}}
\newcommand{\Pic}{{\rm Pic}}
\newcommand{\Spec}{{\rm Spec \,}}
\newcommand{\divf}{{\rm div}}
\newcommand{\Sch}{{\operatorname{\mathbf{Sch}}}}
\newcommand{\<}{\langle}
\renewcommand{\>}{\rangle}
\newcommand{\Sm}{{\mathbf{Sm}}}
\newcommand{\ds}{{/\kern-3pt/}}
\newcommand{\colim}{\mathop{\text{\rm colim}}}
\newcommand{\ov}{\overline}
\renewcommand{\dim}{\text{\rm dim}}
\newcommand{\tuborg}{\left\{\begin{array}{ll}}
\newcommand{\sluttuborg}{\end{array}\right.}
\newcommand{\zar}{{\rm zar}}
\newcommand{\rk}{{\rm rk}}
\newcommand{\gp}{{\rm gp}}
\newcommand{\wt}{\widetilde}
\newcommand{\wh}{\widehat}
\newcommand{\Int}{{\rm Int}}
\def\ol#1{\overline{#1}}
\newcounter{elno}
\newcounter{elno-abc}   
\newcounter{elno-abc-prime}
\begin{document}
\title{Negative $K$-theory and Chow group of monoid algebras}
\author{Amalendu Krishna, Husney Parvez Sarwar}
\address{School of Mathematics, Tata Institute of Fundamental Research,  
1 Homi Bhabha Road, Colaba, Mumbai, India}
\email{amal@math.tifr.res.in}
\email{mathparvez@gmail.com}


\keywords{Algebraic $K$-theory, Chow group, Monoid algebras}        

\subjclass[2010]{Primary 19D50; Secondary 13F15, 14F35}


\begin{abstract}  
We show, for a finitely generated partially cancellative torsion-free
commutative monoid $M$, that $K_i(R) \cong K_i(R[M])$ whenever 
$i \le -d$ and $R$ is a quasi-excellent $\Q$-algebra 
of Krull dimension $d \ge 1$. 
In particular, $K_i(R[M]) = 0$ for $i < -d$. This is a generalization of
Weibel's $K$-dimension 
conjecture to monoid algebras. We show that this generalization fails for
$X[M]$ if $X$ is not an affine scheme. We also show that the Levine-Weibel
Chow group of 0-cycles $\CH^{LW}_0(k[M])$ vanishes for any finitely generated 
commutative partially cancellative monoid $M$ if $k$ is an algebraically 
closed field.
\end{abstract}

\maketitle



\section*{Introduction}\label{sec:Intro}
The monoid algebras are natural generalizations of polynomial and
Laurent polynomial algebras over commutative rings. One can often think of them 
as subalgebras of polynomial or Laurent polynomial algebras generated
by monomials. These are as ubiquitous in the study of various properties
of rings as the  polynomial or Laurent polynomial algebras.
A very natural question in algebraic $K$-theory is to find out to what extent 
various known facts about the $K$-theory of polynomial and Laurent 
polynomial algebras remain valid for more general monoid algebras.

Gubeladze proved several results on this subject in a series of many papers
(\cite{Gub-0}, \cite{Gub-3}, \cite{Gub-4} and \cite{Gub-5} to name a few).
Using the new direction provided by \cite{Haes} and \cite{CHSW}, 
Corti{\~n}as, Haesemeyer, Walker and Weibel together have made significant 
advances in the study of algebraic $K$-theory of monoid algebras
(see \cite{CHW-1}, \cite{CHW-2}, \cite{CHW-3} and \cite{CHW-4}). An old
question on the $K$-theory of monoid algebras was recently settled in 
\cite{KS}. Gubeladze recently settled an old $K$-theoretic question about
monoid algebras \cite{Gub-7}.
The message that comes out of these papers is that even
though some properties of the algebraic $K$-theory of polynomial and 
Laurent polynomial algebras remain valid for monoid algebras, many of them
do not directly extend. 

This paper was motivated by our pursuit for those properties of  
the algebraic $K$-theory of polynomial and Laurent polynomial algebras
which may extend to monoid algebras. Our intuition was that while the higher
Quillen $K$-theory of monoid algebras may not resemble 
that of polynomial and Laurent polynomial algebras, the
negative $K$-theory and part of $K_0$ should. 
In this paper, we attempt to understand two such
properties, namely, Weibel's $K$-dimension conjecture and, vanishing of $SK_0$ and
Levine-Weibel Chow group for
monoid algebras. 

\subsection{Weibel's conjecture for monoid algebras}
Recall that a famous conjecture of Weibel \cite{Weibel} asserts that if $R$ is 
a commutative Noetherian
ring of Krull dimension $d$, then $K_{-d}(R) \simeq K_{-d}(R[t_1, \cdots , t_n])$
and $K_i(R[t_1, \cdots , t_n]) = 0$ for $i < -d$ and  $n \ge 0$. 
An affirmative answer to this conjecture was obtained recently by
Kerz, Strunk and Tamme \cite{KST16}. For Noetherian rings containing $\Q$,
this was earlier solved by Corti{\~n}as, Haesemeyer, Schlichting and
Weibel \cite{CHSW} (see also \cite{GH}, \cite{Krishna} and \cite{Weibel-1}
for older results in positive characteristics).

The main technical tool that goes into the proof of
Weibel's conjecture in \cite{KST16} is a pro-cdh-descent theorem
for algebraic $K$-theory. However, the final step in the proof of
the conjecture using the pro-cdh-descent breaks down in the case of
general monoids because it crucially uses the fact that the map
$R \to R[M]$ is smooth.

In this paper, we shall use a combination of pro-cdh-descent techniques 
and the theory of monoid algebras to show that
the assertion of Weibel's conjecture directly extends to more general monoid 
algebras over some rings. 
Broadly speaking, we prove the following. We refer to \S~\ref{sec:MR} for
the precise results including the relevant terms and notations.

\begin{thm}\label{thm:MT-1}
Let $M$ be a monoid which is finitely generated, commutative, 
partially cancellative and torsion-free. Let $R$ be a quasi-excellent
$\Q$-algebra of Krull dimension $d$.
Assume that one of the following conditions is satisfied.
\begin{enumerate}
\item
$d \ge 1$.
\item
$M$ is cancellative and semi-normal.
\end{enumerate}

Then the map $K_i(R) \to K_i(R[M])$ is an isomorphism for all
$i \le -d$. In particular, $K_i(R[M]) = 0$ for all $i < -d$.
\end{thm}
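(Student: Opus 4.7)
The plan is to prove the theorem by induction on $d=\dim R$, using the pro-cdh descent theorem of Kerz-Strunk-Tamme together with the structure theory of seminormal monoid algebras. Case (2) serves as the base of the induction and is handled independently; case (1) is then derived from case (2) via the seminormalization map $R[M]\hookrightarrow R[M^{sn}]$. A preliminary reduction from partially cancellative to cancellative monoids is carried out first: a finitely generated torsion-free PC monoid is obtained from a cancellative monoid $N$ by collapsing a face $F$ to $0$, so the kernel of $R[N]\twoheadrightarrow R[M]$ is a monomial ideal, and one can cut this case down to the cancellative one via standard excision for monomial ideals and localization sequences.

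For case (2), where $M$ is cancellative and semi-normal, I would apply Gubeladze's pyramidal-extension technique for seminormal monoids to reduce to the case $M\cong\Z^r\oplus\N^s$, i.e., $R[M]\cong R[t_1^{\pm1},\dots,t_r^{\pm1},s_1,\dots,s_s]$. For such Laurent-polynomial-in-polynomial algebras, the desired isomorphism $K_i(R)\xrightarrow{\sim} K_i(R[M])$ for $i\leq -d$ follows by combining the Bass Fundamental Theorem of $K$-theory (to strip off the Laurent variables) with Weibel's conjecture for $R$ itself, which is known in general by Kerz-Strunk-Tamme and earlier over $\Q$ by Cortin\~as-Haesemeyer-Schlichting-Weibel.

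For case (1), having reduced to cancellative $M$, I would pass to the seminormalization $R[M]\hookrightarrow R[M^{sn}]$; under the quasi-excellent $\Q$-algebra hypothesis this extension is finite and birational, with some conductor ideal $\mathfrak{c}\subset R[M]$. The associated abstract blow-up square, together with Kerz-Strunk-Tamme pro-cdh descent, produces a long exact sequence
\[
\cdots\to K_i(R[M])\to K_i(R[M^{sn}])\oplus \{K_i(R[M]/\mathfrak{c}^n)\}\to \{K_i(R[M^{sn}]/\mathfrak{c}^n)\}\to\cdots
\]
of pro-abelian groups. Case (2) already controls $K_i(R[M^{sn}])$ for $i\leq -d$, so the theorem reduces to showing that the pro-systems attached to the conductor thickenings contribute trivially in this range.

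The main obstacle is precisely this last step: the rings $R[M]/\mathfrak{c}^n$ and $R[M^{sn}]/\mathfrak{c}^n$ are not themselves monoid algebras, so direct induction on the pair $(R,M)$ is unavailable. The way through is to exploit that their reductions are (finite products of) monoid algebras over quotients of $R$ of strictly smaller Krull dimension --- this dimension drop is exactly where the hypothesis $d\geq 1$ is used, as the non-seminormal locus of $R[M]$ cuts down the $R$-dimension by at least one. Combined with the rational nil-invariance of negative $K$-theory (Cortin\~as' theorem, which is where the characteristic-zero hypothesis is essential), one can pass from the nilpotent thickenings to their reductions, and the inductive hypothesis applied to the lower-dimensional bases then closes the argument.
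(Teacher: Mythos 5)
Your proposal has the architecture inverted, and both halves contain genuine gaps. For case (2) you claim that Gubeladze's pyramidal-extension technique lets you ``reduce to the case $M\cong\Z^r\oplus\N^s$.'' No such reduction exists: a seminormal (even normal) cancellative torsion-free monoid algebra such as $k[x,y,z,w]/(xw-yz)$ is not a Laurent/polynomial ring and its $K$-theory is not that of one; pyramidal descent is a proof technique for $K_0$-statements over \emph{regular} coefficient rings, not an isomorphism of $K$-groups, and it says nothing about a singular quasi-excellent base $R$ of dimension $d\ge 1$. The seminormal cancellative case over singular $R$ is precisely where the real work lies: one must resolve the singularities of $\Spec(R)$ (Hironaka--Temkin --- this, not ``rational nil-invariance,'' is where the $\Q$-algebra hypothesis enters; negative $K$-theory is nil-invariant integrally by Bass), apply pro-cdh descent to that resolution, and feed in the Gubeladze--Swan theorem $K_0(R')\cong K_0(R'[M])$ only for the \emph{regular} models $R'$ appearing in the resolution, together with the Bass fundamental sequence and a Zariski-descent spectral sequence to propagate the vanishing of $C^MK_i$ from $i\le 0$ on affine regular pieces to $i\le -d$ in general. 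Your proposal never resolves the singularities of $R$ at all, so this case is unproved.

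For case (1), the mechanism you invoke --- ``the non-seminormal locus of $R[M]$ cuts down the $R$-dimension by at least one'' --- is false. The conductor of $R[M]\subset R[sn(M)]$ is a monomial ideal $R[I]$ with $I\subset M$ an ideal of the monoid, and the quotient $R[M]/R[I]^n\cong R[M/(nI)]$ is again a monoid algebra over $R$ \emph{itself}: already for $R[t^2,t^3]$ the non-seminormal locus is all of $\Spec(R)$, of dimension $d$. There are therefore no ``lower-dimensional bases'' to which an inductive hypothesis could apply, and the last step of your argument has nothing to induct on. The correct mechanism is different and more elementary: filter $M\subset sn(M)$ by elementary subintegral extensions $A\subset B=A[x]$ with $x^2,x^3\in A$; for the conductor $I=(x^2,x^3)$ one has $(A/I)_{\mathrm{red}}\xrightarrow{\ \cong\ }(B/I)_{\mathrm{red}}$ (Swan), so by Bass--Milnor excision for non-positive $K$-theory and nil-invariance of $K_{\le 0}$ the relative terms of the conductor square vanish in degrees $\le -1$, giving $K_i(R[M])\cong K_i(R[sn(M)])$ for $i\le -1$. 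This is exactly where the hypothesis $d\ge 1$ (equivalently $i\le -1$) is used; no pro-cdh descent and no dimension drop is involved in this reduction. Your reduction from partially cancellative to cancellative monoids via the monomial-ideal Milnor square is essentially right, but you also omit the needed reduction to positive monoids (splitting off $U(M)\cong\Z^r$ via a Milnor square and the Bass fundamental theorem), without which the seminormalization step does not apply.
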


One can interpret this result as a generalization of Weibel's
$K_{-d}$-regularity and $K_{< -d}$-vanishing conjectures to monoid algebras.

\begin{remk}\label{remk:d=0}
We remark that the condition (2) of \thmref{thm:MT-1} is essential
if $d =0$. In this case, we can in fact assume that $R$ is 
a field (see \lemref{lem:affred}).
Assuming this, Gubeladze has shown that $K_0(R) \cong K_0(R[M])$ if and
only if $M$ is semi-normal (see \cite[Theorem~1.3]{Gub-4}).
\end{remk}

Even if we expect \thmref{thm:MT-1} to hold for $\F_p$-algebras, 
we do not know how to prove it yet.
We hope to pursue this in a future work. However, if we allow ourselves
to invert the characteristic of the ground field, then we can indeed
show that $K_i(R) \cong K_i(R[M])$ for all $i \le -d$ when $M$ is as in 
\thmref{thm:MT-1}. A proof of this is given in \S~\ref{sec:MR}.

\vskip .3cm

For non-affine schemes, we can prove the following extension of
Weibel's conjecture.

\begin{thm}\label{thm:scheme-main-vanish-Intro}
Let $M$ be a cancellative torsion-free monoid and $X$ a quasi-excellent 
$\Q$-scheme of dimension $d$. Then $K_i(X[M]) = 0$ for $i < -d$.
If $M$ is semi-normal, then the map $K_i(X) \to K_i(X[M])$ is an isomorphism 
for all $i \le -d$.  
\end{thm}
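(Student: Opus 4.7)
The plan is to globalize \thmref{thm:MT-1} via Nisnevich descent for the Bass--Thomason--Trobaugh non-connective $K$-theory spectrum $K^B$. Since $R\mapsto R[M]$ commutes with Zariski localization, the assignment $U=\Spec R\mapsto U[M]:=\Spec R[M]$ glues to a scheme $X[M]$ equipped with a natural morphism $\pi:X[M]\to X$. Any Nisnevich cover $V\to U$ of an open in $X$ base changes to a Nisnevich cover $V[M]\to U[M]$, so the presheaf of spectra
\[
\mathcal{F}(U) := \hofib\bigl(K^B(U)\to K^B(U[M])\bigr)
\]
on $X_{\Nis}$ is itself a Nisnevich sheaf of spectra.

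I then evaluate $\mathcal{F}$ on Nisnevich stalks. At each $x\in X$, the stalk $\mathcal{O}_{X,x}^h$ is a quasi-excellent Henselian local $\Q$-algebra of Krull dimension $d_x=\dim\mathcal{O}_{X,x}\le d$, so by \thmref{thm:MT-1}(1) (when $d_x\ge 1$; the case $d_x=0$ is handled via nilpotent invariance of negative $K$-theory when needed), the map $K^B_i(\mathcal{O}_{X,x}^h)\to K^B_i(\mathcal{O}_{X,x}^h[M])$ is an isomorphism for $i\le -d_x$. The long exact sequence of the homotopy fiber then gives $\pi_i\mathcal{F}_x=0$ for $i\le -d_x-1$, so the Nisnevich sheaf $\widetilde{\pi_q\mathcal{F}}$ is supported on the locus $\{x\in X: d_x\ge -q\}$, which has codimension $\ge -q$ and thus dimension $\le d+q$. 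By Grothendieck's bound for cohomology of sheaves with closed support, $H^p_{\Nis}(X,\widetilde{\pi_q\mathcal{F}})=0$ for $p>d+q$.

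Feeding this into the Nisnevich descent spectral sequence
\[
E_2^{p,q}=H^p_{\Nis}(X,\widetilde{\pi_q\mathcal{F}})\Longrightarrow \pi_{q-p}\mathcal{F}(X),
\]
every nonzero $E_2^{p,q}$ satisfies $q-p\ge -d$, so $\pi_i\mathcal{F}(X)=0$ for all $i<-d$. Combined with Weibel's conjecture for $X$ itself \cite{KST16}, this gives $K^B_i(X[M])=0$ for $i<-d$, proving the first assertion. For the isomorphism under the semi-normality hypothesis, \thmref{thm:MT-1}(2) supplies the stalk-level isomorphism uniformly, even at points with $d_x=0$, and the same spectral sequence argument yields $K^B_i(X)\cong K^B_i(X[M])$ for $i\le -d-1$. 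The hard part will be obtaining the isomorphism at exactly $i=-d$: the spectral sequence admits potentially nonzero contributions at $(p,q)=(d+q,q)$ for $q\ge -d$, and closing this gap requires a finer comparison of the two descent spectral sequences for $K^B(X)$ and $K^B(X[M])$, exploiting the sheaf identification $\widetilde{K^B_i}\cong \widetilde{K^B_i(-[M])}$ for $i\le -d$ together with a chase through the differentials in the critical range.
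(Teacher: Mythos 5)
Your strategy---take stalks, apply the affine theorem there, and reassemble with a descent spectral sequence---is genuinely different from the paper's proof, which runs an induction on $\dim X$ directly on the non-affine scheme using Temkin's resolution of singularities and the pro-cdh-descent theorem of Kerz--Strunk--Tamme, with regular schemes (handled via Gubeladze--Swan and the Bass fundamental sequence) as the base case. The paper in fact records the Zariski version of your descent step as \lemref{lem:KS-refine} and remarks that it is not used but may be useful for exactly this non-affine generalization, so the architecture is reasonable. What your route buys is that all the geometric input (resolution, pro-cdh descent) is quarantined inside the affine theorem; what it costs is that you must control the stalks at \emph{every} point, including the zero-dimensional ones, and that is where the proposal goes wrong.

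There are three concrete problems. First, your stalk input fails at points with $d_x=0$ when $M$ is not semi-normal: you assert $K_i(\mathcal{O}^h_{X,x})\to K_i(\mathcal{O}^h_{X,x}[M])$ is an isomorphism for $i\le -d_x=0$, but for a field $k$ Gubeladze's theorem (quoted in \remref{remk:d=0}) says $K_0(k)\to K_0(k[M])$ is an isomorphism \emph{if and only if} $M$ is semi-normal. Hence $\pi_{-1}\mathcal{F}_x\neq 0$ at generic points, $\widetilde{\pi_{-1}\mathcal{F}}$ is not supported in positive codimension, and your bound $H^p=0$ for $p>d+q$ fails at $q=-1$; one only gets $p>d$. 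The theorem's first assertion still survives, because $E_2^{d,-1}$ feeds only into $\pi_{-d-1}\mathcal{F}(X)$, which controls $K_{-d}(X[M])$ rather than $K_{<-d}(X[M])$---but your intermediate claim ``$\pi_i\mathcal{F}(X)=0$ for all $i<-d$'' is actually false (\thmref{thm:Counter} exhibits $\pi_{-d-1}\mathcal{F}(X)\neq 0$ for $d=1$), so the bookkeeping must be redone. Second, ``Grothendieck's bound for sheaves with closed support'' does not apply: the locus $\{x:\,d_x\ge -q\}$ is not closed. The correct tool is the coniveau vanishing statement \cite[Lemma~4]{KS16} invoked in \lemref{lem:KS-refine}; if you insist on the Nisnevich topology you must additionally supply its Nisnevich analogue and verify that $\mathcal{O}^h_{X,x}$ is quasi-excellent before applying \thmref{thm:MT-1} to it, whereas the Zariski topology with ordinary local rings (which are quasi-excellent for free) suffices. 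Third, the gap you leave at $i=-d$ in the semi-normal case is not closed by ``a chase through the differentials,'' but it is not actually hard: $K(U)\to K(U[M])$ is split by the augmentation, so $K_i(X)\to K_i(X[M])$ is automatically injective, and it is surjective as soon as $\pi_{i-1}\mathcal{F}(X)=0$, which your own spectral sequence (once corrected) gives for $i-1\le -d-1$, i.e.\ for all $i\le -d$. Equivalently, replace your fiber $\mathcal{F}$ by the complementary summand $C^MK$ as in \lemref{lem:KS-refine}; this removes the one-degree loss in translating ``the map is an isomorphism'' into ``the fiber vanishes'' and yields the boundary case at once.
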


\vskip .3cm

If $M$ is not semi-normal in \thmref{thm:scheme-main-vanish-Intro}, then
we can show that the monoid extension of Weibel's 
$K_{-d}$-regularity conjecture to general monoids is not valid for 
non-affine schemes.
We deduce this failure from the following precise result.

\begin{thm}\label{thm:Counter**}
Let $X$ be a connected smooth projective curve over a field $k$ of
characteristic zero. Let $M \subset \Z_+$ be the submonoid generated by
$\{2,3\}$.
Then the map $K_{-1}(X) \to K_{-1}(X[M])$ is not an isomorphism if
the genus of $X$ is positive.
\end{thm}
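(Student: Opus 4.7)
The plan is to analyze $K_{-1}(X[M])$ via the conductor (abstract blow-up) square associated with the normalization $k[M] = k[t^2,t^3] \hookrightarrow k[t]$. The conductor equals $\mathfrak{c} = (t^2)\cdot k[t] = (t^2,t^3)\cdot k[M]$, with quotients $k[M]/\mathfrak{c} = k$ and $k[t]/\mathfrak{c} = k[\epsilon]$, $\epsilon^2=0$. Base-changing to $X$ yields the cartesian square
\[
\xymatrix{
X[\epsilon] \ar[r]^{g'} \ar[d]_{f'} & X[t] \ar[d]^{f} \\
X \ar[r]_{g} & X[M]
}
\]
in which $f$ is finite, $g$ is a closed immersion, and $f$ is an isomorphism outside $g(X)$. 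This is a cdh-square of quasi-excellent $\Q$-schemes.

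By cdh-descent for $K$-theory of $\Q$-schemes \cite{CHSW}, this square yields a Mayer--Vietoris long exact sequence
\[
\cdots \to K_i(X[M]) \to K_i(X) \oplus K_i(X[t]) \xrightarrow{\phi_i} K_i(X[\epsilon]) \to K_{i-1}(X[M]) \to \cdots
\]
with $\phi_i(a,b) = f'^*(a) - g'^*(b)$. Since $X$ and $X[t]$ are regular of dimensions $1$ and $2$, respectively, $K_{-1}(X) = K_{-1}(X[t]) = 0$, and the tail specializes to
\[
K_0(X) \oplus K_0(X[t]) \xrightarrow{\phi_0} K_0(X[\epsilon]) \twoheadrightarrow K_{-1}(X[M]) \to 0.
\]
It therefore suffices to prove $\cok(\phi_0) \neq 0$.

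Let $p = f' \colon X[\epsilon] \to X$ be the retraction induced by the inclusion $k \hookrightarrow k[\epsilon]$. By homotopy invariance, $K_0(X[t]) = \pi_X^* K_0(X)$, and since $\pi_X \circ g' = p$ we have $g'^* \circ \pi_X^* = p^* = f'^*$. Consequently $\im(\phi_0) = p^*K_0(X)$, whence $K_{-1}(X[M]) = K_0(X[\epsilon])/p^*K_0(X)$. The determinant $\det \colon K_0(X[\epsilon]) \twoheadrightarrow \Pic(X[\epsilon])$ descends to a surjection
\[
K_{-1}(X[M]) \twoheadrightarrow \Pic(X[\epsilon])/p^*\Pic(X).
\]
The short exact sheaf sequence $0 \to \cO_X \to \cO_{X[\epsilon]}^* \to \cO_X^* \to 1$ (exact because $\epsilon^2 = 0$) induces in cohomology
\[
0 \to H^1(X, \cO_X) \to \Pic(X[\epsilon]) \to \Pic(X) \to 0,
\]
split by $p^*$. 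Hence the quotient is $H^1(X, \cO_X) \cong k^g$, nonzero precisely when the genus $g > 0$. This yields a nonzero class in $K_{-1}(X[M])$, showing that $K_{-1}(X) = 0 \to K_{-1}(X[M])$ fails to be an isomorphism.

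The main subtlety is securing the Mayer--Vietoris sequence for $K$-theory (as opposed to the $\A^1$-invariant theory $KH$) attached to this cdh-square. This is indispensable: $KH$-descent combined with homotopy invariance forces $KH_0(X[\epsilon]) = KH_0(X)$, whence $KH_{-1}(X[M]) = 0$, which would erase the counterexample. The survival of the argument for $K$-theory reflects precisely the failure of $K_0$ to be $\A^1$-invariant, the obstruction being captured here by $H^1(X, \cO_X)$.
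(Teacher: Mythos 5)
Your identification of the conductor square and the cohomological computation of the target group ($H^1(X,\sO_X)$ via the extension $0\to \sO_X\to \sO_{X[\epsilon]}^{\times}\to \sO_X^{\times}\to 1$ and the splitting by $p^*$) are correct and agree with what the paper ultimately extracts. The gap is at the load-bearing step: the Mayer--Vietoris sequence is attributed to ``cdh-descent for $K$-theory of $\Q$-schemes'' with a reference to \cite{CHSW}, but no such theorem exists. Algebraic $K$-theory does not satisfy cdh-descent, nor descent for abstract blow-up squares: the square with $Y=\wt X=\wt Y=X_{\red}$ is an abstract blow-up square, and descent for it would force $K(X)\simeq K(X_{\red})$, which already fails for $K_1(k[\epsilon])$. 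What \cite{CHSW} actually proves is cdh-descent for $KH$ and for infinitesimal $K$-theory, not for $K$ itself. Your own closing observation --- that descent for the $\A^1$-invariant theory $KH$ yields $KH_{-1}(X[M])=0$ and would erase the example --- is precisely the evidence that the exact sequence you need cannot be a formal consequence of any cdh-descent statement of this type, and no substitute argument is offered. As written, the proof is therefore incomplete at its one essential step.

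The step can be repaired in two different ways. The paper's repair is the pro-cdh-descent theorem of \cite{KST16}: it gives an isomorphism of pro-abelian groups between $\{K_{-1}(X[M],nX)\}$ and $\{K_{-1}(X[t],nX[\epsilon])\}$, taken over all infinitesimal thickenings of $X$ in $X[M]$ and of $X[\epsilon]$ in $X[t]$; one must then prove by hand (the ``Claim'' in the paper, a $K_0$-computation on the thickenings using the exponential map in characteristic zero) that the right-hand pro-system surjects onto its $n=1$ stage. This produces only a surjection $K_{-1}(X[M])\cong K_{-1}(X[M],X)\twoheadrightarrow K_{-1}(X[t],X[\epsilon])\cong H^1(X,\sO_X)$, which suffices. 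A cheaper repair, closer in spirit to your write-up, exploits the fact that your square is a conductor (Milnor) square rather than a general abstract blow-up: by \propref{prop:Milnor*}, the map of relative groups $K_i(A[M],A)\to K_i(A[t],A[\epsilon])$ is an isomorphism for $i\le 0$ for every affine open $\Spec(A)\subset X$ and every local ring of $X$; since $\dim X=1$, the Zariski descent spectral sequence then shows that $K_{-1}(X[M],X)\to K_{-1}(X[t],X[\epsilon])$ is an isomorphism (only the homotopy sheaves $\pi_q$ with $q\le 0$ contribute in total degree $-1$), and splicing the two relative long exact sequences, using $K_{-1}(X)=K_{-1}(X[t])=0$ and the splitting $K_0(X[M])\twoheadrightarrow K_0(X)$, recovers exactly your identification $K_{-1}(X[M])\cong \cok(\phi_0)\cong H^1(X,\sO_X)$. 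Either argument must be supplied explicitly; the appeal to cdh-descent for $K$-theory cannot stand.
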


\subsection{Levine-Weibel Chow group and $SK_0$ of monoid algebras}
Apart from the negative $K$-theory, we also wanted to look at other
cohomological properties of polynomial and Laurent polynomial algebras
which can generalize to monoid algebras and which are related to the
non-negative $K$-theory. One of these cohomology groups is the
Levine-Weibel Chow group of 0-cycles $\CH^{LW}_0(X)$ 
for a singular variety $X$ \cite{LW}. 
This group is the singular analogue of the classical Chow group of 0-cycles on
smooth varieties and it is directly related to the $K$-theory of locally
free sheaves.

It is well known that the Levine-Weibel Chow group
vanishes for polynomial and Laurent polynomial algebras over a field.
On the other hand, even though affine toric varieties come up very naturally in
algebraic geometry, it was not known yet if their
Levine-Weibel Chow group is zero in positive characteristics.
Our next result of this paper is the following.

\begin{thm}\label{thm:MT-2}
Let $k$ be an algebraically closed field and $M$ a finitely 
generated commutative partially cancellative monoid of rank at least two.
Then $\CH^{LW}_0(k[M]) = 0$.
\end{thm}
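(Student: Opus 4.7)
My plan is to reduce to the case where $M$ is cancellative, torsion-free, and normal, and then for every smooth closed point $p$ of the normal affine toric variety $X = \Spec(k[M])$ construct an explicit Cartier curve through $p$ and the vertex, together with a piecewise rational function whose divisor on the smooth locus equals $[p]$.

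For the reductions: since $M$ is partially cancellative, standard structural arguments (facial quotients of a cancellative monoid, replacing $k[M]$ by its reduction, passing to the normalization $k[\bar M]$ via a finite birational pushforward on Chow groups, and using a Kummer-type finite \'etale cover over the algebraically closed field $k$ to kill torsion in $M^{\gp}$) bring us to the case where $X = \Spec(k[M])$ is a normal affine toric variety of dimension $r = \rank(M) \geq 2$, with a unique torus-fixed vertex $v$ corresponding to the maximal monomial ideal $\fm_v \subset k[M]$.

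For the main construction, given a smooth closed point $p \in X_{\mathrm{sm}}$, I aim to choose elements $g_1, \ldots, g_{r-1} \in \fm_v$ vanishing at $p$ such that: (a) $g_1, \ldots, g_{r-1}$ form a regular sequence in $\cO_{X, v}$, and (b) the scheme $C = V(g_1, \ldots, g_{r-1}) \subset X$ is a finite union of rational curves $L_0, L_1, \ldots, L_s$, each isomorphic to $\A^1$, meeting pairwise only at $v$, with $p \in L_0$. Existence of such a regular sequence uses Cohen-Macaulayness of $k[M]$ (Hochster's theorem for normal semigroup rings) together with a Bertini-type genericity argument on the choice of $g_i$ inside $\fm_v$. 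Granting (a) and (b), the scheme $C$ is a local complete intersection at $v$, hence a Cartier curve on $X$. Now parametrize $L_0 \cong \Spec k[t]$ with $v \leftrightarrow t = 0$ and $p \leftrightarrow t = t_0 \neq 0$, and define $f \in \cO(C)$ by $f|_{L_0} = t - t_0$ and $f|_{L_i} \equiv -t_0$ for $i \geq 1$. Then $f$ is a regular function on $C$ (its restrictions agree at $v$ with common value $-t_0$), is a unit at $v$, and satisfies $\mathrm{div}_C(f) = [p]$, which gives $[p] = 0$ in $\CH^{LW}_0(X)$.

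The main obstacle is the geometric control of the components of the complete intersection $V(g_1, \ldots, g_{r-1})$: one must arrange simultaneously that $p$ lies on one of them, that the components meet pairwise only at $v$ (so that the piecewise definition of $f$ is globally well-defined and does not pick up spurious divisor contributions), and that the $g_i$ remain a regular sequence at $v$. This requires a delicate combination of Bertini-type genericity with the combinatorial and toric structure of $M$ (for instance, selecting the $g_i$ within a well-chosen subfamily of monomials vanishing at $p$). The hypothesis $\rank(M) \geq 2$ is essential here: it provides $r - 1 \geq 1$ defining equations for the complete intersection curve, and in rank $1$ the statement fails (e.g.\ the cuspidal cubic $\Spec k[t^2, t^3]$ has nonzero Levine-Weibel Chow group of $0$-cycles).
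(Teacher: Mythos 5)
Your proposal takes a route that is entirely different from the paper's, and it has two gaps that I believe are fatal as stated.

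First, the reduction to normal monoids via a ``finite birational pushforward on Chow groups'' is not available. The Levine--Weibel Chow group does not satisfy descent along normalization: vanishing of $\CH^{LW}_0$ for $k[n(M)]$ does not imply vanishing for $k[M]$. Your own rank-one example makes the point --- $\CH^{LW}_0(k[t^2,t^3])\cong k$ while the normalization is $\A^1$ with trivial Chow group --- and nothing about rank $\ge 2$ restores such a transfer map. A Cartier curve on the normalization need not come from a Cartier curve on $X$ (and vice versa), so neither a pushforward nor an injective pullback exists in general. The same objection applies, with extra problems in characteristic $p$, to the proposed Kummer cover killing torsion in $\gp(M)$, and the passage from partially cancellative to cancellative monoids (where $\Spec(k[N/I])$ is a union of faces glued along closed subschemes) is not addressed at all. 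Second, the central construction does not exist for a general smooth point: if $p$ lies in the dense torus of a normal affine toric variety of dimension $r\ge 2$, a complete intersection $V(g_1,\dots,g_{r-1})$ with all $g_i\in\fm_v$ vanishing at $p$ is, for generic choices, an irreducible curve of positive geometric genus, not a union of copies of $\A^1$ meeting only at $v$; the special choices that do yield unions of lines (monomial ideals, orbit closures of one-parameter subgroups) either miss $p$ or fail to be local complete intersections at $v$, so the Cartier condition is lost. You flag this as ``the main obstacle,'' but it is precisely the content of the theorem, not a technical refinement: no argument is offered for why such a configuration exists.

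For comparison, the paper avoids all curve constructions. It proves that $SK_0(R[M])=0$ for Artinian $R$ and finitely generated partially cancellative $M$ (the bulk of Sections 5 and 6, using Milnor squares, Gubeladze's structure theory for semi-normal monoids with torsion, and a prime decomposition of radical ideals in monoids generalizing Swan), shows that for $\dim X\ge 2$ the cycle class map factors as $cyc:\CH^{LW}_0(X)\to SK_0(X)$, and then invokes the affine Roitman theorem of Krishna (Murthy's conjecture) asserting that $cyc:\CH^{LW}_0(A)\to K_0(A)$ is injective for reduced affine algebras over an algebraically closed field. The vanishing of $\CH^{LW}_0(k[M])$ is then immediate. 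If you want a geometric proof by Cartier curves, you would in effect be reproving a special case of that injectivity theorem, which is a deep result; I would recommend instead following the $K$-theoretic route.
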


Note that the lower bound on the rank of the monoid is essential because
it is well known that $\CH^{LW}_0(k[t^2, t^3]) \cong k$.
Combined with \cite[Corollaries~2.7 and 3.4]{Murthy},
the vanishing of the Levine-Weibel Chow group for monoid algebras
has the following algebraic consequence.

\begin{cor}\label{cor:MT-3}
Let $k$ be an algebraically closed field and $M$ a finitely 
generated commutative partially cancellative monoid of rank $n \ge 2$. 
Then every local complete 
intersection ideal of $k[M]$ of height $n$ is a complete intersection.
\end{cor}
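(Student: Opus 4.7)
The plan is to read off the result as a direct consequence of \thmref{thm:MT-2} together with the cited theorems of Murthy, so the proof is essentially a two-step citation chain rather than a new argument. Set $A = k[M]$. Since $M$ is finitely generated, commutative, and partially cancellative of rank $n \ge 2$, the monoid algebra $A$ is a finitely generated reduced $k$-algebra of Krull dimension $n$, and hence falls into the class of rings for which \cite[Cor.~2.7 and 3.4]{Murthy} is formulated. In particular, one obtains a well-defined cycle class map sending a local complete intersection ideal $I \subset A$ of height $n$ to a class $[A/I] \in \CH^{LW}_0(A)$.

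The key input from Murthy's corollaries is that, over an algebraically closed field and for $\dim(A) \ge 2$, the ideal $I$ is generated by $n$ elements (that is, is a complete intersection in the strict sense) as soon as the associated cycle class $[A/I]$ vanishes in $\CH^{LW}_0(A)$. On the other hand, \thmref{thm:MT-2} provides exactly this vanishing: the entire group $\CH^{LW}_0(k[M])$ is zero under the present hypotheses on $k$ and $M$, so the class $[A/I]$ vanishes automatically, forcing $I$ to be a complete intersection.

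The only genuinely non-routine point is the verification that $k[M]$ satisfies the running hypotheses of Murthy's results, namely that it is a reduced affine algebra of dimension $n$ over the algebraically closed field $k$ with $n \ge 2$; this is immediate from the assumptions on $M$, given that $\rank(M) = n$ coincides with $\dim(k[M])$ in the partially cancellative setting. Since all the hard work has been placed upstream in \thmref{thm:MT-2}, there is no substantive obstacle in the deduction itself, and the corollary follows by assembling these two ingredients.
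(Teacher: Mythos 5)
Your argument is exactly the paper's: \corref{cor:MT-3} is presented there as an immediate consequence of \thmref{thm:MT-2} combined with Murthy's Corollaries~2.7 and 3.4, with no further argument supplied, and your two-step citation chain (vanishing of $\CH^{LW}_0(k[M])$ forces the class of any height-$n$ l.c.i.\ ideal to vanish, whence it is a complete intersection by Murthy) is precisely that deduction. The one point you assert a little too quickly is that $k[M]$ is reduced --- this is not automatic for partially cancellative monoids with torsion in positive characteristic --- but since the paper itself works with $k[M]_{\rm red}$ where needed and Murthy's results apply to the reduced algebra, this does not constitute a departure from, or a gap relative to, the paper's own reasoning.
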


\vskip .3cm

One of the two main ingredients in our proof of \thmref{thm:MT-2} is
the following result of independent interest in the algebraic $K$-theory
of monoid algebras. When the base ring is regular and the underlying monoid
is cancellative and torsion-free, this result is due to Gubeladze
\cite[Theorem~1.3]{Gub-4}. 

\begin{thm}\label{thm:MT-4}
Let $R$ be a commutative Artinian ring and $M$ a finitely generated
commutative partially cancellative monoid. Then $SK_0(R[M]) = 0$.
\end{thm}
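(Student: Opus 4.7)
The plan is a two-stage reduction: first from the Artinian base $R$ to a field, and then from the partially cancellative $M$ to a cancellative one where Gubeladze's theorem can be brought to bear.

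Since $R$ is commutative Artinian, it decomposes as a finite product of local Artinian rings; as $K_0$ (and hence $SK_0$) is additive over such products, I may assume $R$ is local Artinian with nilpotent maximal ideal $\fm$. The ideal $\fm \cdot R[M] \subseteq R[M]$ is then nilpotent as well, so nilpotent invariance of $K_0$ yields isomorphisms $K_0(R) \xrightarrow{\sim} K_0(k)$ and $K_0(R[M]) \xrightarrow{\sim} K_0(k[M])$, where $k := R/\fm$, that are compatible with the rank maps. This identifies $SK_0(R[M])$ with $SK_0(k[M])$, so it suffices to prove $SK_0(k[M]) = 0$ for an arbitrary field $k$.

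For the field case, I would use the defining structure of $M$: there is a surjection $N \twoheadrightarrow M$ of (pointed) monoids with $N$ finitely generated and cancellative and with kernel the ideal generated by a face $F \subseteq N$, giving a presentation $k[M] = k[N]/J_F$ where $J_F$ is the monomial ideal spanned by $F \setminus \{1\}$. I would then induct on $\rank N$. The base case is $F = \{1\}$, i.e.\ $M = N$ cancellative: after passing to the seminormalization $N^{\rm sn}$, Gubeladze's theorem \cite[Theorem~1.3]{Gub-4} gives $K_0(k) \xrightarrow{\sim} K_0(k[N^{\rm sn}])$, whence $SK_0(k[N^{\rm sn}]) = 0$; the conductor Milnor square between $k[N]$ and $k[N^{\rm sn}]$ then propagates this vanishing back to $k[N]$, the two conductor quotients being monoid algebras on pc monoids of strictly smaller rank and thus covered by the induction hypothesis. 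For the inductive step with $F \neq \{1\}$, one forms the Milnor square obtained from the surjection $k[N] \twoheadrightarrow k[M]$ by inverting (a generator of) $F$; all four corners are then monoid algebras on pc monoids of rank smaller than $\rank N$, so the resulting $K$-theory long exact sequence, combined with the inductive hypothesis, forces $SK_0(k[M]) = 0$.

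The main obstacle I anticipate is the base case, namely controlling $SK_0(k[N])$ across the seminormalization $N \to N^{\rm sn}$ when $N$ is not seminormal, since $K_0(k[N])$ itself can be strictly larger than $\Z$ in that regime (compare Remark~\ref{remk:d=0}). Reconciling this mismatch between $K_0$ and $SK_0$ at the non-seminormal stratum appears to be the combinatorial heart of the argument, and I expect it to require Gubeladze's more refined nilpotent/pyramidal descent rather than a formal conductor square alone.
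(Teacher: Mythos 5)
Your first reduction, from an Artinian $R$ to a field via nilpotent invariance of $K_0$, is correct and is exactly how the paper begins. After that, however, there are two genuine gaps. First, your structural description of a partially cancellative monoid is wrong: $M=N/I$ for an \emph{arbitrary} ideal $I$ of a cancellative $N$, and such an ideal need not be generated by the nonzero part of a face (already $I=\{2,3,4,\dots\}\subset\Z_+$ is not of that form). Moreover, the submonoid $I_*=I\cup\{0\}$ appearing in the relevant Milnor square ~\eqref{pc-Milnor} satisfies $\gp(I_*)=\gp(N)$ whenever $I$ is a nonempty ideal, so an induction on $\rk(N)$ does not close; and inverting a generator yields a localization sequence, not a Milnor square. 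The paper's way around this is its key new combinatorial input, \lemref{lem:primary}: every radical ideal of an arbitrary finitely generated monoid is a finite intersection of prime ideals (generalizing \cite[Lemma~15.6]{Swan}). One first replaces $I$ by $\sqrt{I}$ (\lemref{lem:ELM-monoid} plus nilpotent invariance), then inducts on the number $r$ of primes using a Mayer--Vietoris square for $\fp_1$ and $\fp_2\cap\cdots\cap\fp_r$ whose right vertical arrow is split surjective, so that the needed $SK_1$-surjectivity in the Bass six-term sequence of \propref{Bass-Milnor} comes for free; the case $r=1$ is the cancellative case, since $N\setminus\fp_1$ is then a cancellative submonoid.

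Second, and more seriously, your base case does not handle torsion. \thmref{thm:MT-4} allows $M$ (hence $N$) to have torsion, while \cite[Theorem~1.3]{Gub-4} applies only to cancellative \emph{torsion-free} monoids; the isomorphism $K_0(k)\cong K_0(k[sn(N)])$ you invoke is simply false once $\gp(N)$ has torsion (e.g.\ $k[\Z/2]\cong k\times k$ away from characteristic $2$). This is where the bulk of the paper's work lies: \lemref{lem:sk} proves $SK_0(R[N])=0$ for all cancellative $N$ by reducing to positive seminormal monoids --- using Swan's extension theorem for subintegral extensions (\lemref{lem:sub-int}) rather than a conductor square, which sidesteps the $SK_1$-surjectivity you would otherwise have to prove --- and then exploiting Gubeladze's description of $sn(N)$ as a union of pieces $\Int(n(\ov{N}\cap F))\times T_F$ over faces $F$ (\lemref{lem:Gub-Semi}) to build a filtration by monoid ideals, finishing with the Milnor squares of \cite{Gub-7} to strip off the torsion factors one at a time. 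Your closing remark correctly senses that a formal conductor square will not suffice, but the real obstruction is the torsion in $N$, not merely the mismatch between $K_0$ and $SK_0$ at the non-seminormal locus.
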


One may recall that in most of Gubeladze's works (except \cite{Gub-7})
on monoid algebras, all monoids 
were cancellative and torsion-free. However, it turns out that
for computing the $K$-theory of cancellative monoid algebras
using various fundamental results like excision in algebraic
$K$-theory, one needs to extend many $K$-theoretic results to
partially cancellative torsion-free monoids. These monoids occur very
naturally in affine geometry. The algebras over such monoids
were first studied by Swan 
\cite[\S~15]{Swan} under the name of discrete Hodge algebras. 
Higher $K$-theory analogue of Serre's problem on
projective modules for such algebras was studied by Vorst \cite{Vorst-1} and 
Gubeladze \cite{Gub-2}.
More $K$-theoretic importance of these monoid algebras 
was later exhibited in several papers of
Corti{\~n}as, Haesemeyer, Walker and Weibel. See, for
instance, \cite{CHW-3} and \cite{CHW-4}.

\begin{remk}\label{remk:Counter-ex}
The reader may have observed that the class of monoids considered in 
\thmref{thm:MT-1} is more restrictive than the one in
\thmref{thm:MT-2} in that we did not allow torsion monoids in
\thmref{thm:MT-1}. The reason for this is the following.
If we let $M$ be a finite torsion group of order $n \ge 2$  and 
$R$ a quasi-excellent $\Q$-algebra of dimension $d \ge 1$ such that 
$K_{-d}(R) \neq 0$ (for instance, take $R$ to be the coordinate ring of
a product of affine nodal curves over $\C$), 
then we know that $K_{-d}(R[M]) = K_{-d}(R[M]_{\rm red})$ contains at least
two copies of $K_{-d}(R)$ as direct summands one of which is the
inclusion $K_{-d}(R) \subset K_{-d}(R[M])$. In particular,
the canonical map $K_{-d}(R) \to K_{-d}(R[M])$ is not an isomorphism.
This shows that the $i = -d$ case  \thmref{thm:MT-1} fails for torsion 
monoids. We already saw that the map $K_{-d}(X) \to K_{-d}(X[M])$ is not an 
isomorphism even when $M$ is torsion-free if $X$ is not affine.

However, if we let $i < -d$, then the situation is likely to be
different as some of the above results show.
We expect this case of \thmref{thm:MT-1} to hold for a very
general class of monoids and a general class of Noetherian schemes. 
So we end the discussion of our results with the following.
\end{remk}

\begin{ques}\label{question-*}
Let $X$ be a Noetherian separated scheme of Krull dimension $d \ge 0$ and
$M$ a finitely generated commutative cancellative monoid. Is
$K_i(X[M]) = 0$ for $i < -d$?
\end{ques}

\vskip .2cm

\subsection{Outline of proofs}\label{sec:Outline}
We give a brief outline of our proofs. Our main strategy for proving
\thmref{thm:MT-1} is to eventually reduce the proof
to the case of cancellative torsion-free semi-normal monoids with the help
of several reductions. To prove
the theorem in this restrictive case, we need to use the pro-cdh descent results
of \cite{KST16}. 
The pro-cdh-descent results and the weak resolution of singularities 
together allow us to reduce to the case when the base scheme $X$ is regular. 
The proof of the vanishing in this case is done using some classical
results of Gubeladze and a Zariski descent argument.
In \S~\ref{sec:prelim}, we recall the basic results about monoids and
prove our reduction steps that are needed in the proof of \thmref{thm:MT-1}.
Gubeladze's Milnor square for monoid algebras plays a key role in these
steps.
The final proofs of Theorems~\ref{thm:MT-1} and 
~\ref{thm:scheme-main-vanish-Intro} using pro-cdh descent are
given in \S~\ref{sec:MR}.

The idea of proving \thmref{thm:MT-2} came to us from two known results.
The first is a classical result of Gubeladze which says that
$SK_0(k[M])$ vanishes if $k$ is a field and $M$ is cancellative torsion-free.
The second is an old conjecture of Murthy \cite{Murthy} which says
that the Levine-Weibel Chow group of an affine variety over an 
algebraically closed field is torsion-free. This conjecture was recently
settled in \cite{Krishna-1}. In view of this positive answer to Murthy's
conjecture, we are left with proving \thmref{thm:MT-4} which extends
Gubeladze's result to a more general class of monoids.

We prove this extension in Sections~\ref{sec:Redn} and ~\ref{sec:LW-**}.
For the proof of \thmref{thm:MT-4}, we have to establish several
reduction steps to reduce the proof to a nicer class of monoids.
This is done in \S~\ref{sec:Redn} using several Milnor squares.
A crucial part of our 
reduction steps is a recent technique of Gubeladze \cite{Gub-7}, which 
tells us how one should deal with cancellative torsion monoids.
This reduction technique plays a key role in our proofs.
The final proof is obtained in \S~\ref{sec:LW-**}. 
The key step in the final proof is a generalization of Swan's result 
\cite[Lemma~15.6]{Swan} on the prime decomposition of radical 
ideals in cancellative torsion-free monoids to 
a more general class of monoids. This allows us to reduce the case of
partially cancellative monoids to cancellative monoids.

One could now naturally ask if Theorems~\ref{thm:MT-2} and ~\ref{thm:MT-4}
could be valid for non-cancellative and other more general class of monoids.
In the hope of dealing with this question in future, we
came up with one more proof of \thmref{thm:MT-4} for partially cancellative 
torsion-free monoids. This proof is given in the end of \S~\ref{sec:LW-**}.
It is substantially $K$-theoretic in flavor and hence has 
potential to generalize to a broader class of monoids.

\section{Recollection and reduction steps}\label{sec:prelim}
In this section, we fix our notations and provide a limited recollection
of the basic definitions in the theory of monoids.
We then establish our reduction steps for reducing the proof of
\thmref{thm:MT-1} to the case of positive cancellative torsion-free 
semi-normal monoids.

Recall that a monoid is same as a semigroup with an identity element.
Throughout this paper, we shall assume all monoids to be commutative 
and finitely generated. We shall assume all our rings to be commutative and
Noetherian and all schemes to be separated and Noetherian.
The dimension of a ring or a scheme will be its Krull dimension.

\subsection{Recollection of monoids}\label{sec:Monoids}
Unless specified otherwise, we shall use the additive notation
for the operation in a monoid but switch to multiplicative operation 
when we consider a monoid inside an algebra generated by it over a commutative
ring. 

For a monoid $M$, we shall let $\gp (M)$ denote the group 
completion of $M$. We have a natural monoid homomorphism
$M \to \gp(M)$. We let $U(M)$ denote the largest submonoid of $M$ which is
also a group. We say that $M$ is positive if $U(M) = 0$.
The rank of the monoid $M$ is the rank of free part of $\gp(M)$.
We shall denote it by $\rk(M)$.
Given a subset $S \subset M$, we shall let $\<S\>$ denote the
submonoid of $M$ generated by $S$. It is the $\Z_+$-linear combination of
elements of $S$.

Given a monoid $M$, we can construct another monoid $M_+$ by adding
a base-point $\infty$ to $M$ and by letting $m + \infty = \infty$ for
all $m \in M_+$.  
We shall call $M_+$ the augmented monoid associated to $M$.
It is a pointed monoid in the language of \cite{CHW-3}.
We have a canonical inclusion of monoids $M \subset M_+$. We shall usually
avoid the usage of more general pointed monoids considered in \cite{CHW-3} as 
we have no need for them. Every monoid homomorphism $f: M \to N$ uniquely 
extends to a monoid homomorphism $f: M_+ \to N_+$ which fixes the base-point.

A monoid $M$ is called cancellative if for $a,b,c \in M$, the condition
$a+b = a+c$ implies that $b = c$. This is equivalent to saying
that the group completion map $M \to \gp(M)$ is injective.
We say that $M$ is torsion-free if
for $a, b \in M$, the condition $na = nb$ for some $n \ge 1$ implies that
$a = b$. If $M$ is cancellative, then it is torsion-free if and only if
$\gp(M) \simeq \Z^r$.

A subset $I \subset M$ is called an ideal in $M$ if $I + M = I$. 
If $I \subset M$ is a proper ideal ($I \neq M$), then $I \cap U(M) = \emptyset$
and $M \setminus U(M)$ is the largest proper ideal of $M$.
If $I \subset M$ is an ideal,
we shall let $I_*$ denote $I \cup \{0\}$. This is a submonoid of $M$.

If $I \subset M$ is an ideal, the quotient $M/I$ is obtained as follows.
We consider the inclusion $I \subset M_+$ and then take the quotient
by the equivalence relation $m \sim \infty$ for all $m \in I$.
There is a unique addition rule in ${M_+}/I$ that turns the canonical 
surjection $M_+ \to {M_+}/I$ into a morphism of monoids
(see \cite[\S~1]{CHW-3}). 
We let $M/I$ be the image of $M$ under the quotient map
$M_+ \to {M_+}/I$. There is an epimorphism of monoids $M \to M/I$ and
a canonical isomorphism $(M/I)_+ \simeq {M_+}/I$.
We allow $I \subset M$ to be the empty set, in which case, we identify $M/I$ with
$M$. If $I = M$, the quotient $M/I$ is identified with the constant 
singleton monoid $\{+\}$. 

A monoid is called  partially cancellative (pc) if there is a cancellative 
monoid $N$ and an ideal $I \subset N$ (possibly empty) such that $M = N/I$.
A monoid $M$ is called partially cancellative torsion-free (pctf)
if there is a cancellative torsion-free
monoid $N$ and an ideal $I \subset N$ (possibly empty) such that $M = N/I$.
Such monoids are called pctf monoids in \cite{CHW-4}.
We shall also use this notation in this paper.

For a monoid $M$ and a ring $R$, we let $R[M]$ denote the free 
$R$-module on $M$. Then $R[M]$ is a commutative $R$-algebra 
in the usual way, with multiplication given by the addition rule for $M$. 
Since $R$ is Noetherian and $M$ is finitely generated by our assumption,
the Hilbert basis theorem tells us that $R[M]$ is also Noetherian.
One also checks that $R[M]$ is an integral domain
if and only if
$R$ is an integral domain and $M$ is cancellative torsion-free
(see \cite[Theorem~4.8]{BG}).
$R[M]$ is reduced if $R$ is reduced and $M$ is cancellative torsion-free.
It can also happen that $R[M]$ is reduced even if $M$ is a torsion monoid. 
For example,
$R[M]$ is reduced if $R$ is reduced and $M$ is cancellative whenever
$\Q \subset R$ (see \cite[Theorem~4.19]{BG}).
If $I$ is an ideal of the monoid $M$, then $R[I]$ is an ideal of the ring
$R[M]$ and $R[M/I] = {R[M]}/{R[I]}$.

Recall that a cancellative monoid $M$ is called normal if
$M = \{a \in \gp(M)| \ na \in M \ \mbox{for \ some} \ n > 0\}$.
One says that $M$ is semi-normal if 
$M = \{a \in \gp(M)| \ 2a, 3a \in M\}$.
The semi-normalization of a cancellative monoid $M$ is
the submonoid $sn(M)$ consisting of all elements $a \in \gp(M)$
such that $2a, 3a \in M$.
Given an inclusion of monoids $M \subset N \subset \gp(M)$ and an
element $a \in N$, we let $M\<a\>$ denote the submonoid of $N$
generated by $M$ and $a$. Given a finite set $F = \{a_1, \cdots , a_r\} 
\subset N$, we can inductively define $M\<F\> \subset N$.
It is then easy to see that
$sn(M) = {\underset{F \subset sn(M)}\colim} M\<F\>$. 
One can also check that $sn(M) = \{a \in \gp(M)| \ na \in M \ 
\mbox{for \ all} \ n \gg 0\}$
(see \cite[\S~1]{Swan}).
We shall let $n(M) = \{a \in \gp(M)| \ na\in M \ \mbox{for \ some} \ n\}$ 
denote the normalization of $M$ in $\gp(M)$.
The following result is elementary.

\begin{lem}\label{lem:ELM-monoid-*}
Let $M$ be a cancellative monoid and let $M \subset M' \subset n(M)$
be inclusions of monoids. Then the following hold.
\begin{enumerate}
\item
$M'$ is cancellative.
\item
$M'$ is torsion-free if $M$ is so.
\item
$M'$ is positive torsion-free if $M$ is so.
\item
$M'$ is positive if $M$ is so provided $M' \subset sn(M)$.
\end{enumerate}
\end{lem}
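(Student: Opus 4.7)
The plan is to prove all four assertions by viewing $M'$ as a submonoid of the abelian group $\gp(M)$. Since $M$ is cancellative, the canonical map $M \to \gp(M)$ is injective, and by definition $n(M) \subset \gp(M)$, so the chain $M \subset M' \subset n(M) \subset \gp(M)$ realizes $M'$ as a submonoid of an abelian group. Statement (1) then follows immediately: any submonoid of a group inherits cancellativity.

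For (2), I would first verify that $\gp(M)$ itself is torsion-free as an abelian group whenever $M$ is torsion-free as a monoid. Indeed, any $g \in \gp(M)$ can be written $g = a - b$ with $a, b \in M$, and $ng = 0$ translates to $na = nb$, which by the torsion-freeness of $M$ forces $a = b$ and thus $g = 0$. Once $\gp(M)$ is torsion-free as a group, every submonoid is torsion-free as a monoid, giving (2).

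For (3), combining (1) and (2) I already know that $M'$ is cancellative and torsion-free, so the remaining task is to check $U(M') = 0$. Given $a \in U(M')$, both $a$ and $-a$ lie in $M' \subset n(M)$, hence there exist positive integers $n_1, n_2$ with $n_1 a \in M$ and $-n_2 a \in M$. Setting $m = n_1 n_2$, both $ma$ and $-ma$ lie in $M$, so $ma \in U(M) = 0$; the torsion-freeness of $\gp(M)$ established above then forces $a = 0$.

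For (4), the argument is analogous but uses the sharper hypothesis $M' \subset sn(M) = \{x \in \gp(M) \mid 2x, 3x \in M\}$. For $a \in U(M')$, both $a$ and $-a$ lie in $sn(M)$, so $2a, 3a, -2a, -3a$ all belong to $M$, placing $2a$ and $3a$ in $U(M) = 0$. Hence $a = 3a - 2a = 0$ in $\gp(M)$, and therefore in $M'$. No substantive obstacle is anticipated; the only subtle point is that (4) genuinely requires the semi-normalization hypothesis, because without torsion-freeness of $M$ one cannot deduce $a = 0$ from $na = 0$ in $\gp(M)$, and so the argument used for (3) does not apply to arbitrary $M' \subset n(M)$.
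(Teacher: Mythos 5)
Your proof is correct, and for parts (1)--(3) it follows essentially the same route as the paper: realize $M'$ inside $\gp(M)=\gp(M')$, note that submonoids of groups are cancellative, check that $\gp(M)$ is torsion-free as a group when $M$ is torsion-free as a monoid, and for (3) push a unit of $M'$ down into $M$ by multiplying by a suitable integer. (Your variant of (3) concludes via $ma\in U(M)=0$ and then torsion-freeness of $\gp(M)$, whereas the paper argues that $na\neq 0$ and exhibits a nonzero unit of $M$; these are interchangeable.) Where you genuinely diverge is in (4): the paper invokes the alternative characterization $sn(M)=\{a\in\gp(M)\mid na\in M \text{ for all } n\gg 0\}$ and then runs a case analysis on whether $n_0a=0$, precisely because torsion-freeness is unavailable. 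You instead use the defining condition $2a,3a\in M$ directly: since $a$ and $-a$ both lie in $sn(M)$, you get $2a,3a\in U(M)=0$ and conclude $a=3a-2a=0$ with no case split. This is shorter and cleaner than the paper's argument and uses only the definition of $sn(M)$ rather than the derived characterization; the paper's version buys nothing extra here. Your closing remark about why (4) cannot be run for arbitrary $M'\subset n(M)$ in the torsion case matches the paper's Remark~\ref{remk:ELM-monoid-*-0}.
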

\begin{proof}
We have the inclusions $M \subset M' \subset n(M) \subset \gp(M)$.
In particular, all these monoids have same group completions.
The parts (1) and (2) of the lemma follow immediately from this.
For (3), suppose that $M$ is positive torsion-free
and let $a \in U(M')$ be a non-zero
element. We then have $a + b = 0$ for some $b \in M'$.
We can choose some $n \gg 0$ such that $na, nb \in M$. Since
$M$ is torsion-free, we must have $na, nb \neq 0$. 
Since $na + nb = n(a+b) = 0$, it follows that $na \in U(M) \setminus \{0\}$.
This contradicts our assumption that $M$ is positive.

We now prove (4). Suppose $M$ is positive and let $a \in U(M')$ be a 
non-zero element. We then have $a + b = 0$ for some $b \in M'$.
Since $M' \subset sn(M)$, we can choose some $n_0 \gg 0$ such that 
$na, nb \in M$ for all
$n \ge n_0$ (see above). If $n_0a \neq 0$, then $n_0b \neq 0$ and
$n_0a + n_0b = 0$, so we get $0 \neq n_0a \in U(M)$, which contradicts our
hypothesis. Suppose that $n_0a = 0$. Then $n_0b = 0$.
But this means that $(n_0+1)a = a \neq 0$ and $(n_0+1)b = b \neq 0$.
On the other hand, we have $(n_0+1)a + (n_0+1)b = (n_0+1)(a+b) = 0$.
Since $(n_0+1)a, (n_0+1)b \in M$, we get $0 \neq (n_0+1)a \in U(M)$.
This again contradicts our hypothesis. We are therefore done.
\end{proof}

\begin{remk}\label{remk:ELM-monoid-*-0}
Note that the part (3) of \lemref{lem:ELM-monoid-*} is not true in general
if $M$ is not torsion-free. In fact, it is easy to see in this case that
$n(M)$ contains the whole torsion subgroup of $\gp(M)$.
\end{remk}

\subsection{Reduction of positive monoids to semi-normal monoids}
\label{sec:Reduction}
In this subsection, we shall establish some reduction steps which will tell us
how to replace a positive cancellative torsion-free monoid in \thmref{thm:MT-1}
by the one which is positive cancellative torsion-free and semi-normal.
Recall that a Cartesian square of commutative rings
\begin{equation}\label{eqn:MilnorSq}
\xymatrix@C1pc{
A_1 \ar@{->}[r]^{\alpha}
     \ar@{->}[d]_{\phi}
&A_2 
     \ar@{->}[d]^{\psi}
\\
B_1 \ar@{->}[r]_{\beta}
     &B_2      
}
\end{equation}
called a {\it Milnor square} if one of $\psi$ and $\beta$
is surjective.
We shall use the following consequence of the classical results of Bass 
\cite{Bass} and Milnor \cite{Milnor} as one important tool.

\begin{prop}\label{prop:Milnor*}
Given a Milnor square ~\eqref{eqn:MilnorSq}, 
the map $K_i(A_1, B_1) \to K_i(A_2, B_2)$ 
of relative $K$-groups is an isomorphism for $i \le 0$. In 
particular, there is a long exact sequence sequence of algebraic $K$-groups
\[
K_i(A_1) \rightarrow K_i(A_2)\oplus K_i(B_1) \rightarrow K_i(B_2) \rightarrow 
K_{i-1}(A_1) \rightarrow \cdots 
\]
for $i \le 1$.
 
If $\psi$ and $\beta$ are both surjective in the Milnor square, then 
$K_i(A_1, B_1) \to K_i(A_2, B_2)$ is an isomorphism for $i \le 1$.
In particular, the above sequence is exact for all $i\leq2$.
\end{prop}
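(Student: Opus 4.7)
The plan is to split the proposition into two parts: (a) establish the excision isomorphism $K_i(A_1,B_1) \xrightarrow{\sim} K_i(A_2,B_2)$ in the stated degree range, and (b) derive the Mayer--Vietoris sequence as a formal consequence of this excision via the relative $K$-theory fibration.

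For part (b), first recall that for any ring homomorphism $R \to S$ one defines the relative $K$-theory spectrum as the homotopy fibre of $K(R) \to K(S)$, giving a long exact sequence $\cdots \to K_i(R,S) \to K_i(R) \to K_i(S) \to K_{i-1}(R,S) \to \cdots$. Applying this to both the rows $A_1 \to B_1$ and $A_2 \to B_2$ and using the excision isomorphism, a routine diagram chase (or equivalently, the octahedral-type argument for the diagram of cofibre sequences) yields the Mayer--Vietoris long exact sequence
\[
K_i(A_1)\to K_i(A_2)\oplus K_i(B_1)\to K_i(B_2)\to K_{i-1}(A_1)\to\cdots
\]
in precisely the range where excision holds. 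So the content is genuinely in part (a).

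For part (a), I would reduce first to the case where both $\psi$ and $\beta$ are surjective: here $I:=\ker\psi$ becomes a common ideal identified inside $A_1$ via $\alpha$ (since $A_1 = A_2\times_{B_2}B_1$), and $B_1 = A_1/I$, $B_2 = A_2/I$. This is the classical Milnor / conductor square setup. In this stronger case one invokes Milnor's patching theorem (\cite{Milnor}) which, for a common ideal, gives $K_i(A_1,I)\xrightarrow{\sim} K_i(A_2,I)$ for $i\le 1$; this is the reason the sequence extends up to $K_2$ on the left. For a general Milnor square (only one of $\psi,\beta$ surjective), the image of the inclusion $\ker\alpha \hookrightarrow A_1$ is still an ideal of $A_1$ mapping isomorphically onto the ideal $\ker\psi$ of $A_2$; Milnor's original argument with patching of projective modules and elementary matrices then gives excision for $K_0$ (and accounts for the units obstruction to $K_1$ excision), explaining why in this weaker case one only obtains isomorphism for $i\le 0$.

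To extend the excision isomorphism from $K_0$ (resp.\ $K_1$) down to all $i\le 0$ (resp.\ $i\le 1$), I would invoke Bass's delooping for negative $K$-theory: the defining formula
\[
K_{i-1}(R) \;=\; \mathrm{coker}\bigl(K_i(R[t])\oplus K_i(R[t^{-1}])\to K_i(R[t,t^{-1}])\bigr),
\]
combined with the fact that the functor $(-)[t], (-)[t^{\pm 1}]$ preserves Milnor squares (the tensor product of a Cartesian square of rings with a flat ring over $\Z$ stays Cartesian, and surjectivity is preserved), allows one to lower the degree by one and inductively propagate excision to all negative indices. The main obstacle --- the only genuinely delicate step --- is the base case for $K_1$ (i.e.\ showing $K_1(A_1,I)\to K_1(A_2,I)$ is an isomorphism when $I$ is a common ideal), as this is precisely where the one-sided-surjection case fails; everything below is formal consequence of Bass's fundamental theorem and the five-lemma applied degree by degree.
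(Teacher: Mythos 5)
The paper offers no proof of this proposition at all: it is stated as a package of classical facts with bare citations to Bass and Milnor. Your sketch reconstructs essentially the route those citations point to --- Milnor patching for the $K_1\to K_0$ Mayer--Vietoris sequence, Bass's contracted-functor delooping to push excision into negative degrees, and the formal Barratt--Whitehead argument converting excision of relative groups into the Mayer--Vietoris sequence --- and the delooping and formal steps are correct in substance. But there is one genuine gap, at precisely the step you yourself single out as ``the only genuinely delicate step'' and then do not prove: the isomorphism $K_1(A_1,I)\to K_1(A_2,I)$ when both $\psi$ and $\beta$ are surjective. You attribute this to ``Milnor's patching theorem,'' but patching of projective modules only gives the equivalence $\mathbf{P}(A_1)\simeq \mathbf{P}(A_2)\times_{\mathbf{P}(B_2)}\mathbf{P}(B_1)$, hence $K_0$-excision and the six-term sequence ending at $K_0(B_2)$; it says nothing about relative $K_1$, and $K_1$-excision for a common ideal is \emph{false} in general. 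Since this is exactly the step that upgrades the range from $i\le 0$ to $i\le 1$ (equivalently, extends the sequence to $K_2(B_2)$), it cannot be waved at. Fortunately the extra hypothesis makes it elementary: cartesianness plus surjectivity of $\psi$ shows that every $g\in \GL(A_2,I)$ lifts uniquely to the matrix $(g,1)$ over $A_1=A_2\times_{B_2}B_1$, so $\GL(A_1,I)\to\GL(A_2,I)$ is an isomorphism of groups; surjectivity of $\beta$ forces $\alpha$ to be surjective, hence $E(A_1)\surj E(A_2)$, hence the normal closure $E(A_1,I)$ of the $I$-elementary matrices maps onto $E(A_2,I)$; passing to quotients gives $K_1(A_1,I)\cong K_1(A_2,I)$. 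When $\beta$ is not surjective the last step breaks (the image of $E(A_1,I)$ may be a proper subgroup of $E(A_2,I)$), which is the true reason the one-sided case only yields $i\le 0$ --- not a ``units obstruction.''

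Two further slips worth fixing. First, you write that $\ker\alpha$ maps isomorphically onto $\ker\psi$; it is $\ker\phi$ that $\alpha$ carries isomorphically onto $\ker\psi$ (while $\ker\alpha\cong\ker\beta$). Second, your identification $K_i(A_j,B_j)=K_i(A_j,I)$ with $I$ a common ideal presupposes that the \emph{vertical} maps are the surjective ones, i.e.\ that $\psi$ is surjective; if only $\beta$ is surjective the vertical maps need not be surjections at all, and one must transpose the square and compare the horizontal relative groups $K_i(A_1,A_2)\to K_i(B_1,B_2)$ instead, then translate back. (In every application in the paper both vertical maps are surjective, so this is a corner case, but your reduction as written does not cover the general Milnor square of the statement.)
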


\begin{lem}\label{lem:affred}
Let $A$ be a ring and $I$ a nilpotent ideal of $A$. 
Then $K_i(A) = K_i(A/I)$ for all $i\leq 0$.
\end{lem}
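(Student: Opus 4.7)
The plan is to argue by downward induction on $i$, starting at the base case $i=0$ and descending via Bass's fundamental theorem.

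For the base case $i=0$, we invoke the classical fact that finitely generated projective modules lift uniquely along nilpotent ideals. Concretely, any idempotent $\bar e \in M_n(A/I)$ lifts to an idempotent in $M_n(A)$ by the standard polynomial lifting formula (using that $I$ is nil), and any two such lifts are conjugate by an element of $1 + M_n(I) \subset \GL_n(A)$. Hence isomorphism classes of finitely generated projective $A$-modules biject with those of $A/I$, giving $K_0(A) \cong K_0(A/I)$.

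For the inductive step, suppose the result holds for $K_{i+1}$ applied to every pair (ring, nilpotent ideal). Bass's fundamental theorem expresses $K_i(R)$ for any ring $R$ as the cokernel
\[
K_i(R) \cong \cok\bigl(K_{i+1}(R[t]) \oplus K_{i+1}(R[t^{-1}]) \to K_{i+1}(R[t,t^{-1}])\bigr).
\]
Applied to $R = A$ and $R = A/I$ and arranged in a commutative ladder, the inductive hypothesis gives an isomorphism on each of the three terms appearing in the cokernel, since $I[t]$, $I[t^{-1}]$ and $I[t,t^{-1}]$ are nilpotent ideals in $A[t]$, $A[t^{-1}]$ and $A[t,t^{-1}]$ respectively, with the same nilpotency index as $I$. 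Functoriality of the cokernel (or the five-lemma applied to the Bass exact sequence) then yields $K_i(A) \cong K_i(A/I)$, closing the induction.

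I do not anticipate any real obstacle here; this is a classical fact essentially due to Bass. The only input beyond idempotent lifting is Bass's fundamental theorem in non-positive degrees, which is built into the very definition of negative $K$-theory of Noetherian rings. No reduction to $I^2 = 0$ is needed, because both ingredients — projective lifting and the preservation of nilpotence under polynomial and Laurent extensions — apply directly to an arbitrary nilpotent $I$.
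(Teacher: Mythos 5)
Your argument is correct, and it is precisely the classical proof of the fact that the paper simply cites from Bass's book (Chapter IX, Proposition 1.3) without reproving: idempotent lifting along a nilpotent ideal gives the $K_0$ case, and the downward induction via the Bass fundamental theorem (which is how $K_{-n}$ is defined) propagates it to all $i<0$, using that $I[t]$, $I[t^{-1}]$, $I[t,t^{-1}]$ remain nilpotent. No gaps; this matches the intended justification in substance.
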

\begin{proof}
See Bass' book \cite[Chapter IX, Proposition 1.3]{Bass}.
\end{proof}

\begin{lem}\label{lem:RemoveSemi}
Let $R$ be a ring and $M$ a positive cancellative 
torsion-free monoid.
Then $K_i(R[M])\simeq K_i(R[sn(M)])$ for all $i\leq -1$.
\end{lem}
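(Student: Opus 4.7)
The plan is to reduce to a single-element extension at the semi-normalization and then apply a suitable Milnor square. Since $sn(M)$ is the filtered colimit ${\colim}_F M\<F\>$ over finite subsets $F \subset sn(M)$ and $K$-theory commutes with filtered colimits of rings, it suffices to prove $K_i(R[M]) \simeq K_i(R[M\<F\>])$ for every finite $F$. Ordering $F = \{a_1,\dots,a_r\}$, I would induct on $r$, passing at each step from $M_k = M\<a_1,\dots,a_k\>$ to $M_{k+1} = M_k\<a_{k+1}\>$; by \lemref{lem:ELM-monoid-*} each $M_k$ remains positive cancellative torsion-free since $M_k \subset sn(M)$, and $2a_{k+1}, 3a_{k+1} \in M \subset M_k$. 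This reduces the problem to the base case $M' = M\<a\>$ for a single $a \in sn(M)\setminus M$ with $2a, 3a \in M$.

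For the base case, the plan is to build a Milnor square of the form
\[
\xymatrix{
R[M] \ar[r] \ar[d] & R[M/I] \ar[d] \\
R[M\<a\>] \ar[r] & R[M\<a\>/I]
}
\]
where $I = \{m \in M : m + a \in M\}$. I would first verify that $I$ is an ideal of $M$ (immediate from $(m+n)+a = (m+a)+n$), that $2a, 3a \in I$ (using $3a,4a \in M$), and that $I + a \subset I$ (using $2a \in M$). Together these imply that the $R$-submodule $R[I] \subset R[M]$ is a common ideal of $R[M]$ and $R[M\<a\>]$, so the square above is a Milnor square with surjective horizontal maps in the sense of \propref{prop:Milnor*}.

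The crux is then to exhibit $R[M\<a\>/I]$ as a square-zero extension of $R[M/I]$. Writing $\bar x$ for the image of $t^a$, one has $\bar x^2 = \overline{t^{2a}} = 0$ since $2a \in I$, while the set-theoretic identity $(M+a)\cap M \subset I$—holding because $m = (m-a) + 2a \in M$ forces $m + a \in M$ whenever $m - a \in M$—lets one check that $R[M\<a\>/I]/(\bar x) \simeq R[M/I]$. Thus $(\bar x)$ is a square-zero ideal with quotient $R[M/I]$, so \lemref{lem:affred} yields $K_i(R[M/I]) \simeq K_i(R[M\<a\>/I])$ for $i \le 0$. Feeding this into the Milnor long exact sequence of \propref{prop:Milnor*} and chasing the resulting comparison (the right vertical iso forces the connecting map to vanish and the diagonal $(\alpha,\phi)$ to be injective in the relevant range) yields $K_i(R[M]) \simeq K_i(R[M\<a\>])$ for $i \le -1$, completing the induction. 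I expect the main obstacle to be the two-step bookkeeping for the square-zero claim: identifying $R[I]$ as a genuine ideal of both rings (which rests on $I + a \subset I$), and then verifying that killing $\bar x$ collapses $R[M\<a\>/I]$ exactly onto $R[M/I]$ with no unexpected extra relations—everything else is a routine diagram chase.
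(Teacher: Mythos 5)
Your proof is correct, and its skeleton is the same as the paper's: reduce via the filtered colimit $sn(M)={\colim}_F M\<F\>$ to a single elementary extension $M\subset M\<\{a\}\>$ with $2a,3a\in M$, form a conductor Milnor square, and combine nilpotent invariance (\lemref{lem:affred}) with the excision statement of \propref{prop:Milnor*}. Where you genuinely diverge is in how the conductor square is built and analyzed. The paper first reduces to $R$ reduced, so that $R[M]$ is reduced and $B=A[x]$ can be viewed inside the total ring of quotients of $A=R[M]$; it then takes the ring-theoretic conductor ideal $I=(x^2,x^3)$ and quotes Swan for the fact that $(A/I)_{\rm red}\to(B/I)_{\rm red}$ is an isomorphism, applying \lemref{lem:affred} on both sides of the square. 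You instead take the monoid-theoretic conductor $I=\{m\in M:\,m+a\in M\}$, check it is a common ideal of $M$ and $M\<\{a\}\>$, and exhibit $R[M\<\{a\}\>/I]$ directly as a square-zero extension of $R[M/I]$ via the inclusion $(M+a)\cap M\subset I$. This buys you a self-contained replacement for the citation to Swan and removes the need to pass to reduced rings or to the total ring of quotients, at the cost of the explicit monoid bookkeeping you flag. The one step worth making explicit is that the relevant isomorphism $K_i(R[M/I])\cong K_i(R[M\<\{a\}\>/I])$ for $i\le 0$ is the one induced by the map $\psi$ appearing in the Milnor square; this holds because the composite $R[M/I]\to R[M\<\{a\}\>/I]\to R[M\<\{a\}\>/I]/(\bar x)\cong R[M/I]$ is the identity and the quotient by $(\bar x)$ is a $K_{\le 0}$-equivalence by \lemref{lem:affred}. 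With that noted, the diagram chase closes exactly as you describe.
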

\begin{proof}
We can assume $R$ to be reduced by \lemref{lem:affred}.
It follows from \cite[Theorem~4.19]{BG} that $R[M]$ is also reduced.
As in \S~\ref{sec:Monoids}, we can write 
$sn(M) = {\underset{F \subset sn(M)}\colim} M\<F\>$,
where $F$ is a finite set of elements $a \in \gp(M)$ such that
$2a, 3a \in M$. Since $K$-theory commutes with direct limits, 
it is enough to prove that $K_i(R[M]) \simeq K_i(R[M\<F\>])$
for $i \le -1$. Since $M\<F\>$ is an iterated extension of
monoids of the form $N\<\{a\}\>$ such that $a \in \gp(N)$ with $2a, 3a \in N$,
it will suffice to prove inductively that
$K_i(R[M]) \simeq K_i(R[M\<\{a\}\>])$ for $i \le -1$, where
$a \in \gp(N)$ with $2a, 3a \in M$.
Note here that $M\<\{a\}\>$ is a positive cancellative 
torsion-free monoid if $M$ is so and $a \in sn(M)$ by \lemref{lem:ELM-monoid-*}.

We let $A = R[M]$ and $B = R[M\<\{a\}\>]$. Let
$F(A)$ denote the total ring of quotients of $A$. Note that $F(A)$ is a 
product of fields. We can 
write $B = A[x] \subset F(A)$, where $x^2, x^3 \in A$.
We then get a conductor Milnor square
\begin{equation}\label{eqn:SN-0}
\xymatrix@C1pc{
A \ar@{->}[r] \ar@{->}[d] & B \ar@{->}[d] \\
A/I \ar@{->}[r] & B/I}
\end{equation}
with conductor ideal $I =(x^2,x^3)$. 

Furthermore, one knows in this case that
$(A/I)_{\rm red} \to (B/I)_{\rm red}$ is an isomorphism 
(see, for instance, \cite[Proof of Theorem~14.1]{Swan}).
This gives a commutative diagram of long exact sequences 
{\small
    \[
     \xymatrix{
     K_{i+1}(A/I) \ar@{->}[r]\ar@{->}[d]& K_i(A, I) \ar@{->}[r]\ar@{->}[d] &
K_i(A)
     \ar@{->}[r]\ar@{->}[d]& K_i(A/I) \ar@{->}[r]\ar@{->}[d]& K_{i-1}(A,I)
\ar@{->}[d]\\
K_{i+1}(B/I) \ar@{->}[r] & K_i(B,I) \ar@{->}[r]
     &K_i(B)\ar@{->}[r]&  K_i(B/I) \ar@{->}[r] & K_{i-1}(B,I).}
\]
}

Since $i \le -1$ and $(A/I)_{\rm red} \xrightarrow{\cong} (B/I)_{\rm red}$, it
follows from \lemref{lem:affred} that the 
first and the fourth (from the left) vertical arrows are isomorphisms.
The second and the fifth vertical arrows are isomorphisms
by \propref{prop:Milnor*}. We conclude that the middle vertical arrow
is also an isomorphism.
\end{proof}

\subsection{Reduction to positive monoids}\label{sec:Reduction-00}
In this subsection, our goal is to prove a reduction step which will allow
us to replace a cancellative torsion-free monoid in \thmref{thm:MT-1}
by the one which is cancellative torsion-free and positive.

\begin{lem}\label{lem:Laurent-K_{-d}}
Let $A$ be a ring of dimension $d \ge 0$. Then the canonical map
$K_{i}(A)\to K_{i}(A[X_1,\ldots,X_n,Y_1^{\pm 1},\ldots,Y_m^{\pm 1}])$ is an
isomorphism for all $i\leq -d$ and $n,m\geq 0$.
\end{lem}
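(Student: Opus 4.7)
I would argue by induction on $m$, with the base case $m=0$ being precisely Weibel's $K_{-d}$-regularity and $K_{<-d}$-vanishing conjectures, proved by Kerz--Strunk--Tamme \cite{KST16}, and the inductive step reduced to Bass' fundamental theorem of (negative) algebraic $K$-theory.

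For the inductive step, assume the statement for $m-1$ and all $n\ge 0$. Fix $n\ge 0$ and set $B = A[X_1,\ldots,X_n,Y_1^{\pm 1},\ldots,Y_{m-1}^{\pm 1}]$. I would invoke the inductive hypothesis twice: once with the existing $n$ polynomial variables to get $K_i(A)\cong K_i(B)$, and once with $n+1$ polynomial variables (identifying $B[Y_m]$ with $A[X_1,\ldots,X_n,Y_m,Y_1^{\pm 1},\ldots,Y_{m-1}^{\pm 1}]$) to get $K_i(A)\cong K_i(B[Y_m])$ for $i\le -d$. Together these show that the canonical map $K_i(B)\to K_i(B[Y_m])$ is an isomorphism, i.e., $NK_i(B)=0$ for $i\le -d$.

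Next, I would apply Bass' fundamental theorem in the single Laurent variable $Y_m$, which in its negative-$K$-theory formulation produces a natural split short exact sequence
\[ 0 \to K_i(B) \to K_i(B[Y_m]) \oplus K_i(B[Y_m^{-1}]) \to K_i(B[Y_m^{\pm 1}]) \to K_{i-1}(B) \to 0, \]
together with a possible $NK_i(B)^{\oplus 2}$ contribution to the middle term. Since $NK_i(B)=0$ by the previous step, and since the inductive hypothesis gives $K_{i-1}(B)\cong K_{i-1}(A)$, which vanishes by the Weibel vanishing conjecture whenever $i-1<-d$ (equivalently $i\le -d$), the sequence forces the natural map $K_i(B)\to K_i(B[Y_m^{\pm 1}])$ to be an isomorphism. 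Composing with the inductively known isomorphism $K_i(A)\cong K_i(B)$ closes the induction.

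The argument is essentially formal once Weibel's conjecture is taken as input, so I do not expect any genuine obstacle. The only real care needed is bookkeeping on the indices (checking that $i\le -d$ really implies both $i-1<-d$ and that the inductive hypothesis is available for the shifted index $i-1$) and confirming that the decomposition in Bass' sequence is natural, so the resulting isomorphism coincides with the canonical structural map $K_i(A)\to K_i(A[X_1,\ldots,X_n,Y_1^{\pm 1},\ldots,Y_m^{\pm 1}])$ asserted in the lemma.
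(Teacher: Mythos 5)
Your proposal is correct and follows essentially the same route as the paper: induction on $m$ with the base case $m=0$ supplied by Kerz--Strunk--Tamme, and the inductive step carried out via Bass' fundamental theorem in the variable $Y_m$, using the inductive hypothesis (applied with both $n$ and $n+1$ polynomial variables) to identify $K_i(B)$, $K_i(B[Y_m])$ and $K_i(B[Y_m^{-1}])$ with $K_i(A)$, together with the vanishing of $K_{i-1}(B)$. The paper packages the final step as a diagram chase using that the structural map $K_i(B)\to K_i(B[Y_m^{\pm 1}])$ is split injective, rather than phrasing it through $NK_i(B)=0$, but the underlying argument is identical.
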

\begin{proof}
The proof is by induction on $m$. The case $m=0$ is due to 
\cite[Theorem~B(ii)]{KST16}. Let $m \ge 1$ and 
assume that $K_{i}(A)\to 
K_{i}(A[X_1,\ldots,X_n,Y_1^{\pm 1},\ldots,Y_{m-1}^{\pm 1}])$ is an isomorphism
for all $n \ge 0$.

We let $B=A[X_1,\ldots,X_n,Y_1^{\pm 1},\ldots,Y_{m-1}^{\pm 1}]$. 
The induction hypothesis implies that
$K_i(A)\cong K_i(B)$ for all $i\leq -d$ and all $n \ge 0$. 
Hence by \cite[Theorem~B(i)]{KST16}, we get
$K_{i-1}(B)=0$ for all $i\leq -d$.
 
We now consider the commutative diagram of $K$-groups
 
 \[
  \xymatrix
{
K_i(B)\oplus K_i(B)
    \ar@{->}[d]^{\xi_i}
\ar@{->>}[r]^-{\beta_i}
& K_i(B)
    \ar@{->}[d]^{\alpha_i} \\
 K_i(B[Y_m]) \oplus K_i(B[Y_m^{-1}])
\ar@{->}[r]
& K_i(B[Y^{\pm 1}_m])
\ar@{->}[r]
&K_{i-1}(B)=0,
 }
\]
where the bottom row is the Bass fundamental exact sequence.
The map $\beta_i$ is defined by $\beta_i(a, b) = a-b$. This is
clearly surjective.
By induction, we have 
\[
K_i(B) \cong K_i(A) \cong 
K_i(A[X_1,\ldots, X_n, Y_m, Y_1^{\pm 1},\ldots,Y_{m-1}^{\pm 1}])
= K_i(B[Y_m]),
\]
where the second isomorphism holds by replacing $n$ by $n+1$.
By the same reason, we get 
$K_i(B)\cong K_i(A)\cong K_i(B[Y_m^{-1}])$ for all $i\leq -d$.
Hence, $\xi_i$ is an isomorphism for all $i\leq -d$.
A diagram chase shows that $\alpha_i$ is surjective.
Since this is already split injective, we are done. 
\end{proof}

\begin{lem}\label{lem:RemovePosi}
Let $M$ be a monoid and $N = M \setminus U(M)$.
Let $R$ be a ring of dimension $d \ge 0$.
If $K_i(R)\xrightarrow{\simeq} K_i(R[N_*])$ for all $i\leq -d$, then 
$K_i(R) \xrightarrow{\simeq} K_i(R[M])$ for all $i\leq -d$.
\end{lem}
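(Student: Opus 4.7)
The plan is to relate $R[M]$ to $R[N_*]$ via a Milnor square whose remaining corners are $R$ and $R[U(M)]$, and then chase the resulting long exact sequence using the hypothesis together with \lemref{lem:Laurent-K_{-d}}.

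Since $N$ is the largest proper ideal of $M$, the $R$-span $R[N]$ is an ideal of $R[M]$, and the quotient monoid $M/N$ is canonically identified with $U(M)_+$, so $R[M]/R[N]\cong R[U(M)]$. The same argument applied to $N_*\subset M$ yields $R[N_*]/R[N]\cong R$. Combined with the $R$-module decompositions $R[M]=R[U(M)]\oplus R[N]$ and $R[N_*]=R\oplus R[N]$ coming from the disjoint union $M=U(M)\sqcup N$, a direct check shows that the commutative square
\[
\xymatrix@C1pc{R[N_*]\ar@{->}[r]\ar@{->}[d] & R[M]\ar@{->>}[d]\\ R\ar@{->}[r] & R[U(M)]}
\]
is Cartesian, and since the right vertical arrow is surjective it is a Milnor square.

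Under the torsion-freeness assumption on $M$ inherited from the context of \thmref{thm:MT-1}, $U(M)$ is a finitely generated torsion-free abelian group, hence $U(M)\cong\Z^r$ for some $r\ge 0$. Thus $R[U(M)]\cong R[Y_1^{\pm 1},\ldots,Y_r^{\pm 1}]$ is a Laurent polynomial ring, and \lemref{lem:Laurent-K_{-d}} gives $K_i(R)\xrightarrow{\simeq}K_i(R[U(M)])$ for all $i\le -d$. The hypothesis provides the analogous isomorphism for $R[N_*]$ in the same range.

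Applying \propref{prop:Milnor*} to the Milnor square produces, for all $i\le 1$, a long exact sequence
\[
K_i(R[N_*])\to K_i(R[M])\oplus K_i(R)\to K_i(R[U(M)])\to K_{i-1}(R[N_*])\to\cdots
\]
The augmentations $R[M]\to R$, $R[N_*]\to R$ and $R[U(M)]\to R$ sending every monoid element to $1$ are compatible retractions of the structure maps from $R$, and assemble into a morphism from our Milnor square to the trivial square with all four corners equal to $R$. On $K$-theory this splits off the exact sequence for the trivial square, leaving the reduced sequence
\[
\wt K_i(R[N_*])\to\wt K_i(R[M])\to\wt K_i(R[U(M)])\to\wt K_{i-1}(R[N_*]),
\]
where $\wt K_i(A):=\cok\bigl(K_i(R)\to K_i(A)\bigr)$. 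For $i\le -d$, the two isomorphisms above force the outer three groups to vanish, so $\wt K_i(R[M])=0$ and $K_i(R)\xrightarrow{\simeq}K_i(R[M])$. The only conceptually nontrivial step is the construction and verification of the Milnor square; everything else is a diagram chase built on earlier results.
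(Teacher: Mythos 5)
Your proof is correct and follows essentially the same route as the paper: the same Milnor square with corners $R[N_*]$, $R[M]$, $R$, $R[U(M)]$ (which the paper imports from Gubeladze's \cite[Lemma~6.1]{Gub-7} rather than verifying directly), the comparison with the trivial square via the augmentation retractions, and the combination of \propref{prop:Milnor*}, the hypothesis on $R[N_*]$, and \lemref{lem:Laurent-K_{-d}} applied to $R[U(M)]\cong R[\Z^r]$. Your explicit remark that freeness of $U(M)$ comes from the torsion-freeness of $M$ in the intended application matches the paper's own (tacit) use of that hypothesis.
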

\begin{proof}
By \cite[Lemma~6.1]{Gub-7} (the proof thereof), there is a Milnor square
\begin{equation}\label{eqn:Unit}
\xymatrix@C1pc{
R[N_*] \ar@{->}[r] \ar@{->}[d]& R[M] \ar@{->}[d]
\\
R \ar@{->}[r]
     &R[U(M)],     
}
\end{equation}
where $\phi$ is split surjective because the composite
$R[U(M)] \to R[M] \to R[U(M)]$ is identity (see also \cite[\S~15]{Swan}).

Since ~\eqref{eqn:Unit} is a commutative diagram of $R$-algebras, the trivial
Milnor square 
\begin{equation}\label{eqn:Unit-0}
\xymatrix@C1pc{
R \ar@{->}[r] \ar@{->}[d]& R \ar@{->}[d]
\\
R \ar@{->}[r]
     &R
}
\end{equation}
(where all maps are identity)
maps to this square.
It follows from \propref{prop:Milnor*} that 
there is a commutative diagram of short exact sequences
\begin{equation}\label{eqn:Unit-1}
\xymatrix@C.8pc{
0 \ar@{->}[r]& K_i(R) \ar@{->}[r]\ar@{->}[d] &K_i(R)\oplus K_i(R) 
     \ar@{->}[r]\ar@{->}[d]& K_i(R) \ar@{->}[r]\ar@{->}[d]& 0\\
     0 \ar@{->}[r] & K_i(R[N_*]) \ar@{->}[r]
     &K_i(R)\oplus K_i(R[M]) 
\ar@{->}[r]&  K_i(R[U(M)]) \ar@{->}[r] & 0.
}
\end{equation}

The left vertical arrow is an isomorphism by our assumption and the
right vertical arrow is an isomorphism by \lemref{lem:Laurent-K_{-d}}
since $U(M)$ is torsion-free (and hence free).
This implies the desired assertion. 
\end{proof}

\subsection{Cancellative to partially 
cancellative monoids}\label{sec:Reduction-*}
We shall now show how we can reduce the proof of 
\thmref{thm:MT-1} to the case of cancellative torsion-free monoids.
The result that we shall use is the following. 

\begin{lem}\label{lem:AffMainVans}
Let $M$ be a partially cancellative torsion-free monoid and 
$R$ a ring of dimension $d \ge 0$.
Assume that $K_i(R) \to K_i(R[N])$ is an isomorphism for all $i \le -d$ 
and for all cancellative torsion-free monoids $N$.
Then $K_i(R) \to K_i(R[M])$ is an isomorphism for all $i \le -d$.
\end{lem}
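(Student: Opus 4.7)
The plan is to exhibit a Milnor square linking $R[M]$ to the cancellative torsion-free cover $R[N]$, and then extract the desired isomorphism from the resulting Mayer--Vietoris sequence by using the hypothesis to identify the other corners. Since $M$ is pctf, write $M = N/I$ for a finitely generated cancellative torsion-free monoid $N$ and an ideal $I \subset N$; if $I = \emptyset$ then $M = N$ and the conclusion is the hypothesis, so assume $I$ is nonempty. Put $I_* := I \cup \{0\} \subset N$, which is a cancellative torsion-free submonoid of $N$ (typically not finitely generated). The square
\[
\xymatrix{
R[I_*] \ar[r]^{\alpha} \ar[d]_{\phi} & R[N] \ar[d]^{\beta} \\
R \ar[r]_{\psi} & R[M],
}
\]
in which $\alpha$ is the inclusion, $\beta$ is the quotient by $R[I]$, $\psi$ is the structural map, and $\phi$ is induced by the monoid homomorphism $I_* \to \{0\}$ collapsing $I$ to the identity (a ring map since $I + I \subseteq I$), is easily verified to be Cartesian at the level of underlying $R$-modules; since $\beta$ is surjective it is a Milnor square in the sense of \propref{prop:Milnor*}.

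Next, I identify the $K$-theory of the left column with that of $R$. For $R[N]$ the hypothesis applies directly. For $R[I_*]$, write $I_* = \colim_\gamma I_\gamma$ with $I_\gamma$ ranging over the finitely generated submonoids of $I_*$; each $I_\gamma$ is finitely generated cancellative torsion-free (as a submonoid of the c.t.f.\ $N$). The hypothesis then gives $K_i(R) \xrightarrow{\simeq} K_i(R[I_\gamma])$ for $i \le -d$, and the transition maps identify with identities on $K_i(R)$ under the structural-map identifications. Since algebraic $K$-theory commutes with filtered colimits of rings, I conclude $K_i(R) \xrightarrow{\simeq} K_i(R[I_*])$ for $i \le -d$.

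Applying \propref{prop:Milnor*} and substituting these identifications (all induced by structural maps, hence compatible with $\alpha_*$, $\phi_*$, and $\beta_*$), the Mayer--Vietoris sequence in the range $i \le -d$ becomes
\[
K_i(R) \xrightarrow{y \mapsto (y,y)} K_i(R) \oplus K_i(R) \xrightarrow{(y,z) \mapsto \psi_*(y-z)} K_i(R[M]) \to K_{i-1}(R) \xrightarrow{y \mapsto (y,y)} K_{i-1}(R)^{\oplus 2},
\]
where $d \ge 0$ ensures $i - 1 \le -d$ as well. Injectivity of the rightmost map forces the connecting arrow $K_i(R[M]) \to K_{i-1}(R)$ to vanish, so $\psi_*$ is surjective; exactness at $K_i(R) \oplus K_i(R)$ identifies the kernel of $(y,z) \mapsto \psi_*(y-z)$ with the diagonal, forcing $\psi_*$ to be injective. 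Hence $\psi_*$ is an isomorphism for all $i \le -d$, as required.

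The main technical obstacle I expect is the careful bookkeeping needed to confirm that the Mayer--Vietoris maps really do become the diagonal and difference maps written above; this rests on checking that the various structural maps into $R[I_*]$, $R[N]$, and $R[M]$ compose consistently with the four maps of the Milnor square. A secondary subtlety, rather than a genuine obstacle, is that $R[I_*]$ is generally non-Noetherian, which is why the auxiliary computation $K_i(R[I_*]) \cong K_i(R)$ is done via the filtered colimit step rather than via a direct application of the hypothesis.
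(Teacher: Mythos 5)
Your proof is correct and follows essentially the same route as the paper: both write $M = N/I$, form the Milnor square with corners $R[I_*]$, $R[N]$, $R$, $R[M]$, and conclude from the Mayer--Vietoris sequence of \propref{prop:Milnor*} after identifying $K_i(R[I_*])$ and $K_i(R[N])$ with $K_i(R)$ (the paper phrases the last step as a five-lemma comparison with the trivial Milnor square rather than your direct diagram chase, but this is cosmetic). Your filtered-colimit treatment of $R[I_*]$ is a worthwhile refinement: $I_*$ need not be finitely generated as a monoid (e.g.\ $N = \Z_+^2$ with $I$ the ideal generated by $(1,0)$), so the paper's direct appeal to the hypothesis for $I_*$ implicitly needs exactly the colimit argument you supply.
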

\begin{proof}
By definition of partially cancellative torsion-free monoids, we can write
$M = N/I$, where $N$ is a cancellative torsion-free monoid.
This yields a Milnor square
\begin{equation}\label{pc-Milnor}
\xymatrix{
R[I_*] \ar@{->}[r]
     \ar@{->}[d]
&R[N]
     \ar@{->}[d]
\\
R \ar@{->}[r]
     &R[N/I].     
}
\end{equation}

Mimicking the proof of \lemref{lem:RemovePosi}, we get a commutative diagram
of long exact sequences
\begin{equation}\label{eqn:Unit-2}
\xymatrix@C.6pc{
K_i(R) \ar@{->}[r]\ar@{->}[d] & K_i(R)\oplus K_i(R) 
     \ar@{->}[r]\ar@{->}[d]& K_i(R) \ar@{->}[r]\ar@{->}[d] & K_{i-1}(R) \ar[d] 
\ar[r] & K_{i-1}(R)\oplus K_{i-1}(R) \ar[d] \\
K_i(R[I_*]) \ar@{->}[r] & K_i(R)\oplus K_i(R[N]) 
\ar@{->}[r]&  K_i(R[N/I]) \ar@{->}[r] & K_{i-1}(R[I_*]) \ar[r] &
K_{i-1}(R)\oplus K_{i-1}(R[N]).
}
\end{equation}

All vertical arrows in this diagram except possibly the middle one are
isomorphisms because $N$ and $I_*$ are cancellative torsion-free 
monoids. It follows that the middle vertical arrow is an isomorphism too.
This finishes the proof.
\end{proof}

An identical argument also proves the following variant of
\lemref{lem:AffMainVans}. We shall not use this here but it may be useful
in answering Question~\ref{question-*}. 

\begin{lem}\label{lem:AffMainVans-torsion}
Let $M$ be a partially cancellative monoid and 
$R$ a ring of dimension $d \ge 0$.
Assume that $K_i(R) \to K_i(R[N])$ is an isomorphism for all $i \le -d$ 
and for all cancellative monoids $N$.
Then $K_i(R) \to K_i(R[M])$ is an isomorphism for all $i \le -d$.
\end{lem}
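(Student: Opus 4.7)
The plan is to mimic the proof of \lemref{lem:AffMainVans} word-for-word, with the single observation that nowhere does that argument actually need the torsion-free hypothesis — it only uses cancellativity of the two auxiliary monoids $N$ and $I_*$, and cancellativity is stable under passing to submonoids.

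First, unravelling the definition of a partially cancellative monoid, I can write $M = N/I$ for some cancellative monoid $N$ and ideal $I \subseteq N$ (possibly empty). The quotient construction reviewed in \S\ref{sec:Monoids} yields exactly the same conductor-style Milnor square
\[
\xymatrix{
R[I_*] \ar[r] \ar[d] & R[N] \ar[d] \\
R \ar[r] & R[N/I]
}
\]
as in ~\eqref{pc-Milnor}; the identifications $R[N/I] = R[N]/R[I]$ and $R = R[I_*]/R[I]$ hold for the same reason, independently of torsion. The vertical maps are surjective, so \propref{prop:Milnor*} supplies a Mayer--Vietoris long exact sequence for all $i \le 1$.

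Next I would map the trivial Milnor square of the ring $R$ into the above square via the structural maps $R \to R[I_*]$, $R \to R$, $R \to R[N]$, $R \to R[N/I]$. This produces a ladder of long exact sequences identical in shape to ~\eqref{eqn:Unit-2}, in which the comparison maps on the $R[I_*]$, $R$, and $R[N]$ columns are of the form $K_i(R) \to K_i(R[L])$ for $L \in \{I_*, \{0\}, N\}$. The crucial point is that $I_* = I \cup \{0\}$ is a submonoid of $N$, and a submonoid of a cancellative monoid is cancellative; hence both $N$ and $I_*$ are cancellative monoids. The hypothesis of the lemma therefore applies to each of them, and the four outer vertical arrows (in degrees $i$ and $i-1$, for $i \le -d$) are isomorphisms.

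Finally, the five lemma forces the middle arrow $K_i(R[N/I]) \to K_i(R[N/I])$ — that is, the canonical map $K_i(R) \to K_i(R[M])$ after identifying $R[N/I] = R[M]$ — to be an isomorphism for all $i \le -d$. There is really no obstacle here beyond bookkeeping: the only substantive point to verify is that $I_*$ is cancellative, and this is immediate. This is why the statement can be copied verbatim from \lemref{lem:AffMainVans} with the words ``torsion-free'' removed on both sides.
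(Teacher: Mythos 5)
Your proposal is correct and is essentially the paper's own proof: the paper simply states that ``an identical argument'' to that of \lemref{lem:AffMainVans} applies, and your write-up supplies exactly that argument, correctly identifying that the only hypothesis actually used is cancellativity of $N$ and of the submonoid $I_*$ (which is inherited from $N$). Nothing further is needed.
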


We shall also use the following result in the proof of \thmref{thm:MT-1}.

\begin{lem}\label{lem:reg-neg}
Let $R$ be a regular ring and $M$ a partially cancellative torsion-free monoid.
Then $K_i(R[M])=0$ for all $i\leq -1$.
\end{lem}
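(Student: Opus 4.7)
The plan is to chain together the three reduction lemmas established in this section and then finish with a direct $K$-theoretic computation in the seminormal case. Throughout, I will use without comment the classical fact that a regular Noetherian ring $R$ has $K_j(R)=0$ for all $j<0$, so that every assertion of the form ``$K_i(R)\to K_i(R[M])$ is an isomorphism for $i\leq -1$'' is equivalent to the desired vanishing $K_i(R[M])=0$.

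First I would apply \lemref{lem:AffMainVans} to reduce to the case when $M$ is cancellative torsion-free. An inspection of the proof of that lemma shows that the same argument goes through with $d$ replaced by $1$ in the present setting: every occurrence of a negative $K$-group of $R$, and of $R[U]$ for a free abelian group $U$ (which is again regular), is automatically zero, so all the vertical arrows outside the middle column of the diagram~\eqref{eqn:Unit-2} used in that proof are isomorphisms for $i\leq -1$. Next, given cancellative torsion-free $M$, I would apply \lemref{lem:RemovePosi} with $N=M\setminus U(M)$ to reduce further to the positive cancellative torsion-free case; once more the regularity of $R$ and of $R[U(M)]$ upgrades the range of that lemma from $i\leq -d$ to $i\leq -1$. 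Finally, by \lemref{lem:RemoveSemi} I can reduce to the case when $M$ is positive, cancellative, torsion-free, and seminormal.

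For this base case, I would invoke the classical theorem of Gubeladze \cite[Theorem~1.3]{Gub-4}, which asserts $K_0(R[M])\cong K_0(R)$. The monoids $M\times\Z_{\geq 0}^m$ remain positive, seminormal, cancellative, and torsion-free, and the monoids $M\times\Z^m$ are seminormal cancellative torsion-free (seminormality being immediate from $\mathrm{sn}(\Z^m)=\Z^m$). Extending Gubeladze's theorem to the latter, non-positive case via the Milnor square of \lemref{lem:RemovePosi}, whose other corners involve monoid algebras of free abelian groups (regular) and of positive seminormal monoids (covered directly by Gubeladze), yields $K_0$ of every iterated polynomial and Laurent polynomial extension of $R[M]$ equal to $K_0(R)$. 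Feeding these computations into the Bass fundamental exact sequence
\[
K_n(A)\to K_n(A[t])\oplus K_n(A[t^{-1}])\to K_n(A[t,t^{-1}])\to K_{n-1}(A)\to 0
\]
first with $A=R[M]$ and $n=0$ shows $K_{-1}(R[M])=0$, and then iterating with $A=R[M\times\Z^k]$ for $k=1,2,\ldots$ gives $K_{-i}(R[M])=0$ for every $i\geq 1$ by induction on $i$.

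The main obstacle is the extension of Gubeladze's $K_0$-theorem from positive seminormal monoids to the non-positive monoids $M\times\Z^k$ that one must feed into the Bass recursion; this is not quite the statement of \cite[Theorem~1.3]{Gub-4}. The extension follows from it via a short Mayer--Vietoris computation on the Milnor square of \lemref{lem:RemovePosi}, whose attached $K$-theory sequence collapses into the desired isomorphism because the three other corners are either regular rings or positive seminormal monoid algebras, both of which have $K_0=K_0(R)$.
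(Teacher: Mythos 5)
Your overall architecture is the same as the paper's: reduce the partially cancellative case to the cancellative case via the Milnor square attached to $M=N/I$, then (in the paper's alternative argument) pass to positive semi-normal monoids via Lemmas~\ref{lem:RemovePosi} and ~\ref{lem:RemoveSemi}, and finish with the $K_0$-theorem plus the Bass fundamental sequence. Your observation that the reduction lemmas, stated for $i\le -d$, really prove the statement for $i\le -1$ once all the auxiliary negative $K$-groups vanish is correct, and you are in fact more careful than the paper in inserting the reduction to positive monoids before invoking \lemref{lem:RemoveSemi} (which is only stated for positive monoids). The paper's primary proof of the cancellative case is simply a citation of \cite[Theorem~1.3]{Gub-4}, which already contains the vanishing of the negative $K$-groups; its secondary proof, which is the one you are reconstructing, instead cites the Gubeladze--Swan theorem \cite[Corollary~1.4]{Swan} in the form $K_0(R)\cong K_0(R[M\oplus\Z])$ and then runs the Bass recursion exactly as you do.

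The genuine gap is in your bootstrap from the positive case to $M\times\Z^k$. The Milnor square of \lemref{lem:RemovePosi} applied to $M'=M\times\Z^k$ has fourth corner $R[N_*]$ with $N_*=\bigl((M\setminus\{0\})\times\Z^k\bigr)\cup\{(0,0)\}$ (this is $M\leftthreetimes\Z^k$ in the notation of \S~\ref{sec:SNM}). This monoid is positive, cancellative, torsion-free and semi-normal, but it is \emph{not} finitely generated: if $m_0$ is an atom of $M$, then $(m_0,z)$ admits no nontrivial decomposition in $N_*$, so any generating set must contain $(m_0,z)$ for every $z\in\Z^k$. Hence it is not ``covered directly by Gubeladze'' in the form you invoke \cite[Theorem~1.3]{Gub-4}, whose ambient convention (as in this paper) is that monoids are finitely generated; your key input $K_0(R[N_*])\cong K_0(R)$ is therefore unjustified as written, and without it the whole Bass recursion (which needs $K_0(R[M\times\Z^k])\cong K_0(R)$ for all $k$) does not start. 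The gap is repairable: either write $N_*$ as a filtered union of finitely generated positive semi-normal submonoids and use that $K_0$ commutes with filtered colimits, or---much more simply---cite Swan's Corollary~1.4, which holds for arbitrary (not necessarily finitely generated, not necessarily positive) semi-normal cancellative torsion-free monoids and so applies to $M\oplus\Z^k$ directly, making your Milnor-square detour unnecessary. The latter is exactly what the paper does.
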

\begin{proof}
If $M$ is cancellative torsion-free, the lemma follows from 
\cite[Theorem~1.3]{Gub-4}. In this case, we can also prove it
using Lemma~\ref{lem:RemoveSemi}. This lemma allows us to assume
that $M$ is semi-normal. Now, by the Gubeladze--Swan theorem 
(see \cite[Corollary~1.4]{Swan}), we have $K_0(R)\cong K_0(R[M \oplus \Z])$.
Together with the fundamental exact sequence, this implies that
$K_i(R[M])=0$ for all $i\leq -1$. 

In case $M$ is partially cancellative torsion-free, 
we can write  $M=N/I$, where $N$ is 
cancellative torsion-free. Hence, by similar arguments as in 
Lemma~\ref{lem:AffMainVans}, we get a Milnor square
 
 \begin{equation}\label{pc-Milnor-1}
 \xymatrix{
 	R[I_*] \ar@{->}[r]
 	\ar@{->}[d]
 	&R[N]
 	\ar@{->}[d]^{\phi}
 	\\
 	R \ar@{->}[r]
 	&R[N/I].     
 }
 \end{equation}

Using \propref{prop:Milnor*}, this yields the long exact sequence 
 \begin{equation}\xymatrix{
 \cdots \ar@{->}[r] & K_i(R)\oplus K_i(R[N]) 
 \ar@{->}[r]&  K_i(R[N/I]) \ar@{->}[r] & K_{i-1}(R[I_*]) \ar[r] &
 \cdots
}
\end{equation}
for $i \le -1$. 
Note that $I_*$ is a cancellative torsion-free monoid. 
Hence, we have $K_i(R[I_*])=0$ for all $i\leq -1$. Since
$K_i(R[N])$ and $K_i(R)$ are both zero for $i\leq -1$, we get 
$K_i(R[N/I])=0$ for $i\leq -1$. This finishes the proof.
\end{proof}

\section{The monoid version of Weibel's conjecture}\label{sec:MR}
The goal of this section is to prove \thmref{thm:MT-1}.
We fix a field $k$. 
We let $\Sch_k$ denote the category of
separated schemes which are essentially of finite type over $k$ and let 
$\Sm_k$ denote the
full subcategory of $\Sch_k$ consisting of schemes which are regular. 
We shall denote the product of $X, Y \in \Sch_k$ over $k$ by $X \times Y$.
Let $\wt{\Sch}_k$ denote the category of separated Noetherian schemes over $k$.
We let ${\wt{\Sch_k}}/{\zar}$ denote the Grothendieck site on 
${\wt{\Sch}}_k$ given by the Zariski topology.
We shall consider all cohomology groups with respect to the Zariski topology 
in this paper.

\subsection{Quasi-excellent schemes and resolution of singularities}
\label{sec:QE}
Recall from \cite[Chapter~32, p.~260]{Matsumura} that a (Noetherian) ring $A$
is called excellent if the following hold.
\begin{enumerate}
\item
The fibers of the map $A_{\fm} \to \wh{A_{\fm}}$ are geometrically regular
for every maximal ideal $\fm \in A$.
\item
The regular locus $X_{\rm reg}$ of every finite type affine scheme $X$ over $A$
is open in $X$. 
\item
$A$ is universally catenary.
\end{enumerate}

If $A$ satisfies only (1) and (2), it is called quasi-excellent.
A Noetherian separated scheme $X$ is called (quasi-) excellent if is it
covered by the spectra of (quasi-) excellent rings.
One important property of (quasi-) excellent schemes we shall use in this
paper is that if $X$ is a (quasi-) excellent scheme and $X' \to X$
is an essentially of finite type morphism, then $X'$ is 
also (quasi-) excellent. In particular, all objects of $\Sch_k$ are 
excellent. This is also true if we replace $k$ by any complete local ring.


Let $\sC_k$ be a subcategory of ${\wt{\Sch}}_k$.
We shall say that $\sC_k$ admits weak resolution of singularities if the
following hold.
\begin{enumerate}
\item
If $X \in \sC_k$ and $Y \to X$ is a finite type morphism in ${\wt{\Sch}}_k$,
then $Y \in \sC_k$.
\item 
Given a reduced scheme $X \in \sC_k$, there exists a Cartesian square of 
schemes 
\begin{equation}\label{eqn:CarSq}
\xymatrix{
\widetilde{Y} \ar@{->}[r]
     \ar@{->}[d]
&\widetilde{X}
     \ar@{->}[d]
\\
Y \ar@{->}[r]
     &X      
}
\end{equation}
such that 
$\wt X \rightarrow X$ is a proper, the
horizontal arrows are nowhere dense closed immersions, $\wt{X} \in \Sm_k$
and $\wt X \setminus \wt Y \rightarrow X\setminus Y$
is an isomorphism. 
Note that $\wt{X}, \ Y$ and $\wt{Y}$ belong to $\sC_k$ by (1). 
\end{enumerate}

We shall use the following celebrated result of Hironaka 
\cite[Theorem 1*]{Hiro} and its extension to quasi-excellent schemes by Temkin 
\cite[Theorem~1.1]{Temkin}.

\begin{thm}\label{thm:Res-sing}
The category of quasi-excellent schemes over $\Q$ admits weak resolution of 
singularities.
\end{thm}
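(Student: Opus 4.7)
The plan is to verify the two axioms in the definition of weak resolution of singularities directly, invoking Temkin's extension of Hironaka's theorem as the only nontrivial ingredient.

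First I would check axiom (1), closure under finite type morphisms. This is classical: if $A$ is a quasi-excellent ring and $B$ is a finitely generated $A$-algebra, then $B$ is again quasi-excellent (\cite[Chapter~32]{Matsumura}). Globalizing, a finite type morphism $Y \to X$ from a separated Noetherian scheme to a quasi-excellent $\Q$-scheme yields $Y$ quasi-excellent over $\Q$. (In the statement of the theorem, $\wt X \in \Sm_k$ should be read as: $\wt X$ regular and quasi-excellent over $\Q$, extending the finite-type-over-$k$ convention of $\Sm_k$ to this broader setting.)

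For axiom (2), let $X$ be a reduced quasi-excellent $\Q$-scheme. Temkin's theorem (\cite[Theorem~1.1]{Temkin}) produces a projective birational morphism $\pi : \wt X \to X$ such that $\wt X$ is regular and $\pi$ restricts to an isomorphism over the regular locus $X_{\reg} \subset X$. I would take $Y = X \setminus X_{\reg}$ with its reduced induced scheme structure and $\wt Y = \pi^{-1}(Y)_{\red}$. Quasi-excellence of $X$ guarantees that $X_{\reg}$ is open, so $Y$ is closed; reducedness of $X$ places every generic point of $X$ in $X_{\reg}$, so $Y$ is nowhere dense, and hence so is $\wt Y \subset \wt X$ since $\pi$ is dominant. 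The resulting square
\[
\xymatrix{
\wt Y \ar[r]\ar[d] & \wt X \ar[d]^{\pi} \\
Y \ar[r] & X
}
\]
is Cartesian, $\wt X$ is regular, $\pi$ is proper (indeed projective), the horizontal maps are nowhere dense closed immersions, and $\wt X \setminus \wt Y \to X \setminus Y$ is an isomorphism by the choice of $Y$.

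The only substantive input is Temkin's theorem, which I invoke as a black box; once it is granted, the verification is essentially tautological. Thus there is no real obstacle here — the statement is a repackaging of Temkin's result in the language introduced just above.
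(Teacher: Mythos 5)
Your proposal is correct and takes essentially the same route as the paper, which states the theorem without a written proof, treating it as an immediate consequence of Temkin's extension of Hironaka's theorem together with the stability of quasi-excellence under (essentially) finite type morphisms recorded in \S~\ref{sec:QE}; your reading of $\wt{X}\in\Sm_k$ as ``regular'' is also the intended one, since that is all that is used later. One cosmetic adjustment: take $\wt{Y}=\pi^{-1}(Y)$ with its scheme-theoretic structure rather than its reduction, so that the square is literally Cartesian as the definition demands --- nothing else in your argument changes.
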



\subsection{The spectrum $C^MK$}\label{sec:C-M}
We let $X \mapsto K(X)$ denote the non-connective Thomason-Trobaugh algebraic 
$K$-theory presheaf of spectra on $\wt{\Sch}_k$
(denoted by $K^B(X)$ in \cite{TT}). Given a monoid $M$, we have an
augmented $k$-algebra $k[M]$. For any $X \in \wt{\Sch}_k$, we let
$X[M] = X \times_k \Spec(k[M]) \cong X \times_{\Spec(\Z)} \Spec(\Z[M])$. 
The augmentation of $k[M]$ yields
natural maps $X \xrightarrow{\sigma_X} X[M] \xrightarrow{p_X} X$
whose composite is identity. In particular, there is functorial
decomposition $K(X[M]) \simeq K(X) \amalg C^MK(X)$, where
$C^MK(X) = {\rm hofiber}(K(X[M]) \xrightarrow{\sigma^*_X} K(X))$.
It follows that $X \mapsto C^MK(X)$ is a presheaf of spectra on 
${\wt{\Sch}}_k$. 
For any $i \in \Z$, we shall let $C^MK_i(X)$ denote the stable homotopy group
of $\pi_i(C^MK(X))$.  
We let $C^M\sK_{i,X}$ denote the Zariski sheaf on $X$ associated to the 
presheaf $U \mapsto \pi_i(C^MK(U))$. 
 
If $f:Y \to X$ is a morphism in ${\wt{\Sch}}_k$, we shall let
$C^MK(X,Y)$ denote the homotopy fiber of the map
$f^*: C^MK(X) \to C^MK(Y)$. We let $C^MK_i(X,Y)$ denote the homotopy groups
of $C^MK(X,Y)$. We let $C^M\sK_{i,(X,Y)}$ denote the Zariski sheaf on 
$X$ associated to the presheaf $U \mapsto C^MK_i(U, U \times_X Y)$.

\begin{thm}$($\cite[Theorem~10.3]{TT}$)$\label{thm:TT-gen}
Given $X \in {\wt{\Sch}}_k$ and a closed subscheme $Z \subset X$, there exists 
a strongly convergent spectral sequence
\begin{equation}\label{eqn:TT-S}
E^{p,q}_2 = H^p(X, C^M\sK_{q,(X, Z)}) \Rightarrow C^MK_{q-p}(X,Z).
\end{equation}
\end{thm}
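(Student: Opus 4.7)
The plan is to reduce the statement to the standard hypercohomology descent spectral sequence for a presheaf of spectra on $X_{\rm Zar}$ satisfying Zariski descent. The main preliminary step is to verify that the presheaf $\mathcal{F} : U \mapsto C^MK(U, U \times_X Z)$ on the small Zariski site of $X$ satisfies Zariski descent.

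First I would observe that the absolute presheaf $U \mapsto K(U[M])$ satisfies Zariski descent on $X_{\rm Zar}$. Since $U[M] = U \times_X X[M]$ by base change, any Zariski open cover of $U$ pulls back to a Zariski open cover of $U[M]$, so the Zariski descent theorem for non-connective $K$-theory \cite{TT} applied to $X[M]$ transports back to the small Zariski site of $X$. Next, the augmentation $\sigma_U : U \to U[M]$ splits the projection $p_U$ functorially in $U$, so the decomposition $K(U[M]) \simeq K(U) \vee C^MK(U)$ is natural and exhibits $C^MK$ as a retract of a descending presheaf; hence $C^MK$ itself satisfies Zariski descent on $X_{\rm Zar}$. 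The same holds for the presheaf $U \mapsto C^MK(U \times_X Z)$ by pullback along $U \mapsto U \times_X Z$, and since homotopy fibers of maps between descending presheaves of spectra again descend, $\mathcal{F}$ has Zariski descent.

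Given descent for $\mathcal{F}$, I would invoke the Brown--Gersten / Thomason hypercohomology descent spectral sequence, which for any presheaf of spectra with Zariski descent produces a spectral sequence whose $E^{p,q}_2$-term is $H^p(X, -)$ of the Zariski sheafification of $\pi_q \mathcal{F}$, abutting to $\pi_{q-p}$ of its global sections. By the definition recalled in \S~\ref{sec:C-M}, this sheafified $\pi_q$ is precisely $C^M\sK_{q,(X,Z)}$, and the abutment at $X$ is $C^MK_{q-p}(X, Z)$, giving the asserted shape. Strong convergence follows because $X$ is Noetherian of finite Krull dimension $d$: Grothendieck vanishing kills $H^p(X, -)$ in Zariski degrees $p > d$, so only finitely many columns contribute to each total degree and the filtrations are bounded.

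The only delicate point I anticipate is verifying that the Zariski sheafification of the presheaf $U \mapsto C^MK_q(U, U \times_X Z)$ agrees with the object $C^M\sK_{q,(X,Z)}$ as introduced in \S~\ref{sec:C-M}, but this is immediate from the compatibility of sectionwise stable homotopy groups with local fibrant replacement. Thus the result is essentially \cite[Theorem~10.3]{TT} applied to the Zariski-descending presheaf $\mathcal{F}$.
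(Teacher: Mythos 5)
Your argument is correct and matches the paper's proof in substance: the paper likewise obtains the spectral sequence by running the argument of \cite[Theorem~10.3]{TT} for the presheaf $U \mapsto C^MK(U, U\times_X Z)$, whose Zariski descent follows exactly as you say from the naturality of the splitting $K(U[M]) \simeq K(U) \amalg C^MK(U)$ and the fact that open covers of $U$ pull back to open covers of the affine scheme $U[M]$ over $U$. The only ingredient the paper records that you omit is the continuity fact $A_{\fp}\otimes_k k[M] \simeq \varinjlim\, (A[M])[f_i^{-1}]$ together with \cite[Proposition~3.20.2]{TT}; this is not needed for the existence of the spectral sequence itself, but it is what later identifies the stalks of $C^M\sK_{q,(X,Z)}$ with the relative $C^MK_q$ of the local rings $\sO_{X,x}$, which is how the theorem is actually used in Lemmas~\ref{lem:waytored} and \ref{lem:KS-refine}.
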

\begin{proof}
This is proved by repeating the argument of \cite[Theorem~10.3]{TT}
verbatim with the aid of \cite[Proposition~3.20.2]{TT} and the fact that
if $U = \Spec(A)$ is an affine open in $X$ with a prime ideal $\fp \subset A$
such that $A_{\fp} = {\underset{i}\varinjlim} \ A[f^{-1}_i]$,
then $A_{\fp} \otimes_k k[M] \simeq {\underset{i}\varinjlim} \ 
(A[M])[f^{-1}_i]$. We leave out the details.
\end{proof} 

\begin{lem}\label{lem:waytored}
Let $X \in \wt{\Sch}_k$ be of dimension $d$ and $X_{\red}$ denote the 
reduced subscheme of $X$. 
Let $M$ be any monoid. Then $C^MK_i(X, X_{\rm red}) = 0$ for 
$i < -d$. If $M$ is cancellative torsion-free, then
$C^MK_i(X, X_{\rm red}) = 0$ for $i \le -d$. In particular,
$C^MK_i(X) \simeq C^MK_i(X_{\red})$ for all $i\leq -d$.
\end{lem}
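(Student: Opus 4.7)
\emph{Plan.} The strategy is to combine the Zariski descent spectral sequence of \thmref{thm:TT-gen} with nilpotent invariance of negative $K$-theory (\lemref{lem:affred}). First, I would apply \thmref{thm:TT-gen} to the closed immersion $X_{\red} \hookrightarrow X$ to obtain a strongly convergent spectral sequence
\[
E_2^{p,q} = H^p(X,\, C^M\sK_{q,(X,X_{\red})}) \Longrightarrow C^MK_{q-p}(X, X_{\red}).
\]
Since $\dim X = d$, Grothendieck vanishing gives $E_2^{p,q} = 0$ for $p > d$. A straightforward diagonal count then shows that vanishing of the sheaves $C^M\sK_{q,(X,X_{\red})}$ for $q < 0$ implies $C^MK_n(X, X_{\red}) = 0$ for $n < -d$, while vanishing for $q \le 0$ implies $C^MK_n(X, X_{\red}) = 0$ for all $n \le -d$.

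Next, sheaf vanishing can be tested on stalks, so it suffices to show that $C^MK_q(A_\fp, (A_\fp)_{\red}) = 0$ for the local ring $A_\fp = \sO_{X,\fp}$ in the appropriate range of $q$. The nilradical $N$ of $A_\fp$ is nilpotent, $(A_\fp)_{\red} = A_\fp/N$, and the ideal $NA_\fp[M] \subset A_\fp[M]$ is again nilpotent. Because the augmentation splitting $K(R[M]) \simeq K(R) \vee C^MK(R)$ is natural in $R$, taking homotopy fibers along the quotient $A_\fp \to (A_\fp)_{\red}$ yields a corresponding splitting of the relative spectra
\[
K(A_\fp[M],\, NA_\fp[M]) \simeq K(A_\fp, N) \vee C^MK(A_\fp, (A_\fp)_{\red}).
\]
Hence $C^MK_i(A_\fp, (A_\fp)_{\red})$ is a direct summand of $K_i(A_\fp[M], NA_\fp[M])$, and it suffices to control the latter.

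For the general case, \lemref{lem:affred} applied to the two nilpotent surjections produces isomorphisms $K_i(A_\fp) \cong K_i((A_\fp)_{\red})$ and $K_i(A_\fp[M]) \cong K_i((A_\fp)_{\red}[M])$ for $i \le 0$. The relative long exact sequences then force $K_i(A_\fp, N) = 0$ and $K_i(A_\fp[M], NA_\fp[M]) = 0$ for $i \le -1$, so by the splitting $C^MK_i(A_\fp, (A_\fp)_{\red}) = 0$ for $i < 0$. The spectral sequence concludes $C^MK_n(X, X_{\red}) = 0$ for $n < -d$.

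For the stronger cancellative torsion-free assertion, one must upgrade the sheaf vanishing to $q = 0$. The key additional input is that when $M$ is cancellative torsion-free a reduced ring $R$ yields a reduced monoid algebra $R[M]$ (recalled in \S\ref{sec:Monoids}), so $(A_\fp[M])_{\red} = (A_\fp)_{\red}[M]$. Combined with the lifting of units and elementary matrices through a surjection with nilpotent kernel, one checks that $K_1(A_\fp[M]) \to K_1((A_\fp)_{\red}[M])$ is surjective, and the long exact sequence then yields $K_0(A_\fp[M], NA_\fp[M]) = 0$. Hence $C^MK_0(A_\fp, (A_\fp)_{\red}) = 0$ and the spectral sequence gives $C^MK_n(X, X_{\red}) = 0$ for all $n \le -d$. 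The ``in particular'' statement follows from the long exact sequence of the pair $(X, X_{\red})$. The main obstacle is correctly identifying the stalks of the sheaves $C^M\sK_{q,(X,X_{\red})}$ and the direct-summand structure of $C^MK$ in the relative setting; once these are in place, the vanishing reduces to standard nilpotent invariance results.
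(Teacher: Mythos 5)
Your proposal is correct and follows essentially the same route as the paper: the Thomason--Trobaugh descent spectral sequence reduces everything to the stalks $C^MK_q(\sO_{X,x},(\sO_{X,x})_{\red})$, which are direct summands of $K_q(A[M], IA[M])$ for the nilpotent ideal $IA[M]$; nilpotent invariance (\lemref{lem:affred}) kills these for $q<0$, and the $q=0$ case in the cancellative torsion-free setting comes down to surjectivity of $K_1(A[M])\to K_1(A_{\red}[M])$ using that $A_{\red}[M]=(A[M])_{\red}$. The only (harmless) difference is in that last step: you invoke lifting of units and elementary matrices through the nilpotent surjection, whereas the paper splits $K_1$ as $SK_1\oplus(\text{units})$ and computes the unit group of the reduced monoid algebra via Bruns--Gubeladze.
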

\begin{proof}
The first assertion is an immediate consequence of
the spectral sequence ~\eqref{eqn:TT-S} and \lemref{lem:affred}.
We show the second part.

Using the spectral sequence ~\eqref{eqn:TT-S}, it suffices to show that
$C^MK_i(A, I) = 0$ for $i \le 0$ if $A$ is the local ring of a Zariski
point of $X$ and $I$ is its nil-radical.
We shall prove the stronger result that
$K_i(A[M], I[M]) = 0$ for $i \le 0$. 
Using \lemref{lem:affred} and the long exact sequence for relative $K$-theory,
we only need to show that the map $K_1(A[M]) \to K_1(A/I[M])$ is 
surjective.

We now consider the commutative diagram of short exact sequences
\begin{equation}\label{eqn:waytored-0}
\xymatrix@C.8pc{
0 \ar[r] & SK_1(A[M]) \ar[r] \ar[d] & K_1(A[M]) \ar[r] \ar[d] &
(A[M])^{\times} \ar[r] \ar[d] & 0 \\
0 \ar[r] & SK_1(A/I[M]) \ar[r] & K_1(A/I[M]) \ar[r] &
(A/I[M])^{\times} \ar[r] & 0.}
\end{equation}

It follows from our assumption on $M$ and
\cite[Theorem~4.19]{BG} that $A/I[M] = (A[M])_{\rm red}$. We conclude from
\cite[Chap.~IX, Propositions~3.10 and 3.11]{Bass} 
(see also \cite[Chapter~III, Lemma~2.4]{Weibel-2}) that
the left vertical arrow in ~\eqref{eqn:waytored-0} is an isomorphism.

To prove that the right vertical arrow in ~\eqref{eqn:waytored-0} 
is surjective (which will finish the proof), we consider the commutative
diagram
\begin{equation}\label{eqn:waytored-1}
\xymatrix@C.8pc{
A^{\times}  \times U(M) \ar[r] \ar[d] & (A[M])^{\times} \ar[d] \\
(A/I)^{\times} \times U(M) \ar[r]^-{\simeq} & (A/I[M])^{\times}.}
\end{equation}

Since $A/I$ and $A/I[M]$ are reduced (observed above) and $A$ is local,
it follows from \cite[Proposition~4.20]{BG} that the lower horizontal
arrow is an isomorphism. Since the left vertical arrow is clearly
surjective (uses again that $A$ is local), we conclude that
the right vertical arrow must also be surjective, as desired. 
\end{proof}

\subsection{The main result}\label{sec:Main-prf}
We are now ready to prove \thmref{thm:MT-1}, which extends the assertion of
Weibel's $K$-dimension conjecture from polynomial to monoid algebras.
The following result is a refinement of \cite[Proposition~6.1]{KST16}.
We do not use it here but include it because it may be
useful in the generalization of \thmref{thm:MT-1} for non-affine schemes. 
We fix a field $k$.

\begin{lem}\label{lem:KS-refine}
Let $M$ be a monoid and $X \in {\wt{\Sch}}_k$.
Assume that $C^MK_i(\sO_{X,x}) = 0$ for all $i \le -\dim(\sO_{X,x})$
and all points $x \in X$. Then $C^MK_i(X) = 0$ for all $i \le -\dim(X)$.
\end{lem}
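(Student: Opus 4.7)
My plan is to deduce the lemma from the strongly convergent Zariski descent spectral sequence of \thmref{thm:TT-gen} applied with $Z = \emptyset$, namely
\[
E^{p,q}_2 = H^p(X, C^M\sK_{q,X}) \Rightarrow C^MK_{q-p}(X),
\]
combined with a refined cohomological dimension estimate that promotes the pointwise hypothesis to a vanishing of all relevant $E_2$-terms. Set $d = \dim(X)$. Since non-connective $K$-theory commutes with filtered colimits of rings, the stalk of $C^M\sK_{q,X}$ at $x \in X$ is $C^MK_q(\sO_{X,x})$, so the hypothesis forces $(C^M\sK_{q,X})_x = 0$ whenever $\dim(\sO_{X,x}) \le -q$. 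In particular, the stalks of $C^M\sK_{q,X}$ can be nonzero only at points of codimension $\ge -q+1$.

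The technical core of the proof is the following estimate: if $\sF$ is any abelian sheaf on the Noetherian scheme $X$ whose stalks vanish at every point of codimension $\le e$, then $H^p(X, \sF) = 0$ for $p > d - e - 1$. I would prove this by writing $\sF$ as the filtered colimit of its subsheaves $\Gamma_Z(\sF)$ of sections supported on closed subsets $Z \subset X$ of codimension $\ge e+1$; the assumption on stalks ensures that every local section of $\sF$ is already supported in such a $Z$, so the colimit exhausts $\sF$. Each $\Gamma_Z(\sF)$ is supported on $Z$ and hence isomorphic to a pushforward from $Z$, whose dimension is $\le d - e - 1$, so Grothendieck vanishing on $Z$ gives the desired bound. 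Because Zariski cohomology commutes with filtered colimits on Noetherian spaces, the bound transfers to $\sF$. Applied with $\sF = C^M\sK_{q,X}$ and $e = -q$, this yields $H^p(X, C^M\sK_{q,X}) = 0$ for $p \ge d + q$.

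To conclude, any $E^{p,q}_2$-term contributing to $C^MK_i(X)$ with $i = q - p \le -d$ satisfies $p \ge q + d$ and therefore vanishes by the previous step. The spectral sequence is strongly convergent because it is concentrated in the horizontal strip $0 \le p \le d$ by Grothendieck vanishing, so this gives $C^MK_i(X) = 0$ for all $i \le -d$.

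The main obstacle is precisely the refined cohomological dimension bound in the second paragraph: classical Grothendieck vanishing only delivers $H^{>d} = 0$, which by itself is too weak when $q < 0$, and one must genuinely exploit the codimension structure of the support of $C^M\sK_{q,X}$. Should the filtered-colimit-of-supports formulation introduce any bookkeeping subtlety with non-quasi-coherent sheaves, a safe fallback is to invoke Grothendieck's coniveau (Cousin) spectral sequence for $H^*(X,\sF)$, whose $E_1$-term at bidegree $(p,\cdot)$ involves only local cohomology at points of codimension $p$; under the hypothesis, exactly the rows contributing to $C^MK_i(X)$ with $i \le -d$ are killed.
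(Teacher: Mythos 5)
Your argument is correct and follows essentially the same route as the paper: both run the descent spectral sequence of Theorem~\ref{thm:TT-gen} and kill the terms $H^p(X, C^M\sK_{q,X})$ with $p \ge q+d$ by observing that the hypothesis forces $C^M\sK_{q,X}$ to be supported in codimension $\ge -q+1$. The only difference is that you prove the refined cohomological-dimension bound in-line (exhausting the sheaf by subsheaves of sections supported on closed subsets of dimension $\le d+q-1$ and applying Grothendieck vanishing there), whereas the paper delegates precisely this step to \cite[Lemma~4]{KS16}.
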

\begin{proof}
For $i < - \dim(X)$, this is stated in \cite[Proposition~6.1]{KST16} and 
proven in \cite[Proposition~3]{KS16}.  However, the proof given there 
works in the modified case as well with no change. We briefly explain it.

Let $d$ denote the dimension of $X$.
Using the spectral sequence ~\eqref{eqn:TT-S}, it suffices to show that
$H^p(X, C^M\sK_{q,X}) = 0$ whenever $q-p \le -d$.
Suppose first that $q+d \le 0$. In this case, it suffices to show that
$C^M\sK_{q,X} = 0$. But this follows from our assumption because 
$\dim(\sO_{X,x}) \le d \le -q$ for all $x \in X$.

We now fix $0 \le p \le d$ and $q-p \le -d$ 
(equivalently, $p > q+d-1$) such that $q+d > 0$.
By \cite[Lemma~4]{KS16} (where we take $r = q+d-1$), it suffices to show that
if $x\in X$ is a Zariski point with $\dim(\ov{\{x\}}) \ge q+d$, then
$C^MK_q(\sO_{X,x}) = 0$. 
But the condition $\dim(\ov{\{x\}}) \ge q+d$ implies that
$\dim(\sO_{X,x}) \le -q$. Our hypothesis implies again that
$C^MK_q(\sO_{X,x}) = 0$. This finishes the proof.
\end{proof}

\begin{lem}\label{lem:Regular}
Let $M$ be a cancellative torsion-free semi-normal monoid and 
$X$ a regular Noetherian scheme of dimension $d \ge 0$. Then
$C^MK_i(X) = 0$ for $i \le -d$.
\end{lem}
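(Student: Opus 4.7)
The plan is to reduce the claim to a stalk-wise statement via the Thomason--Trobaugh descent spectral sequence, and then invoke the Gubeladze--Swan theorem together with Lemma~\ref{lem:reg-neg} on each regular local ring. Since $\sigma_X^*$ is split by $p_X^*$, the statement $C^MK_i(X)=0$ is equivalent to saying that $K_i(X)\to K_i(X[M])$ is an isomorphism for $i\leq -d$.

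First I would apply the spectral sequence of \thmref{thm:TT-gen} with $Z=\emptyset$ to the presheaf of spectra $C^MK$, which gives
\[
E_2^{p,q}=H^p(X,C^M\sK_{q,X})\Rightarrow C^MK_{q-p}(X).
\]
Grothendieck vanishing ensures $E_2^{p,q}=0$ whenever $p>d$, so to kill the abutment in total degree $q-p\leq -d$ it suffices to show that $E_2^{p,q}=0$ for every pair $(p,q)$ with $0\leq p\leq d$ and $q\leq p-d$; since $p\leq d$, these pairs all have $q\leq 0$. Hence everything reduces to showing $C^M\sK_{q,X}=0$ for every $q\leq 0$, i.e.\ to showing that $C^MK_q(\sO_{X,x})=0$ for all $x\in X$ and all $q\leq 0$.

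For the stalk-wise vanishing I would fix a point $x\in X$ and set $R=\sO_{X,x}$, which is a regular local ring. For $q\leq -1$, regularity gives $K_q(R)=0$, while \lemref{lem:reg-neg} (applied to the regular ring $R$ and the cancellative torsion-free, hence pctf, semi-normal monoid $M$) gives $K_q(R[M])=0$; therefore $C^MK_q(R)=0$ in this range. For $q=0$, the Gubeladze--Swan theorem (\cite[Corollary~1.4]{Swan}) asserts that $K_0(R)\to K_0(R[M])$ is an isomorphism when $R$ is regular and $M$ is cancellative torsion-free semi-normal, so $C^MK_0(R)=0$ as well.

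Combining the two ranges yields $C^MK_q(R)=0$ for every $q\leq 0$, which kills every $E_2^{p,q}$ contributing to total degrees $\leq -d$ in the spectral sequence, and hence $C^MK_i(X)=0$ for all $i\leq -d$. The substantive input is entirely from Gubeladze--Swan and from \lemref{lem:reg-neg}; the rest is a local-to-global passage via the descent spectral sequence, so I do not foresee a serious obstacle beyond verifying that the $q=0$ case genuinely requires the semi-normality hypothesis on $M$ (which is exactly where Gubeladze--Swan enters, and which explains why the hypothesis is imposed in the statement).
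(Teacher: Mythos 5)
Your proposal is correct and takes essentially the same route as the paper: Gubeladze--Swan for the $K_0$-statement over regular rings, vanishing of negative $K$-theory of $R[M]$ for $R$ regular, and the Thomason--Trobaugh Zariski descent spectral sequence for the local-to-global step. The only cosmetic difference is that you invoke \lemref{lem:reg-neg} for the range $q\le -1$, whereas the paper re-derives that vanishing on affine regular schemes via the Bass fundamental exact sequence applied to $X[T^{\pm 1}]$ before sheafifying.
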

\begin{proof}
Since $X$ is regular, we can assume that it is connected. We first assume that 
$X$ is affine. In this case, it follows from \cite[Corollary~1.4]{Swan}
that $C^MK_0(X) = 0$. The same holds if we replace $X$ by $X[T^{\pm 1}]$ because
the latter scheme is also affine and regular.

On the other hand, the fundamental exact sequence of Bass yields a 
surjective homomorphism $C^MK_i(X[T^{\pm 1}]) \surj C^MK_{i-1}(X)$.
We conclude inductively that $C^MK_i(X) = 0$ for $i \le 0$.  
Using the spectral sequence ~\eqref{eqn:TT-S}
(see the proof of \lemref{lem:waytored}), we now conclude that
$C^MK_i(X) = 0$ for $i \le -d$ if $X$ is any regular Noetherian 
scheme of dimension $d$.
\end{proof}

\begin{lem}\label{lem:scheme-main}
Let $\sC_k$ be a subcategory of ${\wt{\Sch}}_k$ which admits weak resolution of 
singularities. Let $X \in \sC_k$ be of dimension $d \ge 0$. 
Let $M$ be a cancellative torsion-free
semi-normal monoid. Then $K_{i}(X) \to K_{i}(X[M])$ is an
isomorphism for all $i \le -d$.
\end{lem}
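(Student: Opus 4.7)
The plan is to prove, by induction on $d = \dim(X)$, the equivalent statement that $C^MK_i(X) = 0$ for all $i \le -d$. For the base case $d = 0$, \lemref{lem:waytored} (applicable because $M$ is cancellative torsion-free) gives $C^MK_i(X) \cong C^MK_i(X_{\rm red})$ for $i \le 0$, and $X_{\rm red}$ being a reduced Noetherian scheme of dimension zero is a finite disjoint union of spectra of fields, hence regular. Then \lemref{lem:Regular} (which uses the semi-normality of $M$) yields the vanishing.

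For the inductive step, take $X \in \sC_k$ of dimension $d \ge 1$, and reduce to the reduced case using \lemref{lem:waytored} once more. Invoking weak resolution of singularities, I obtain an abstract blowup square as in ~\eqref{eqn:CarSq} with $\wt X \in \Sm_k$, $Y \subset X$ and $\wt Y \subset \wt X$ nowhere dense (so $\dim Y, \dim \wt Y \le d-1$), and $\wt X \setminus \wt Y \xrightarrow{\sim} X \setminus Y$; all four schemes lie in $\sC_k$ by closure under finite type morphisms. The key tool is then the pro-cdh descent theorem of \cite{KST16} for algebraic $K$-theory. Since $(-) \times_k \Spec(k[M])$ is flat, the abstract blowup square of $X$ pulls back to an abstract blowup square of $X[M]$, and the $n$-th infinitesimal thickening $Y_n$ of $Y$ in $X$ pulls back to the $n$-th infinitesimal thickening of $Y[M]$ in $X[M]$ (and similarly for $\wt Y_n$). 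Applying pro-cdh descent to both $K(X)$ and $K(X[M])$ and taking the fiber of the augmentation, the analogous square is homotopy cartesian in pro-spectra for $C^MK$, yielding a long exact pro-Mayer--Vietoris sequence
\[
\cdots \to \{C^MK_{i+1}(\wt Y_n)\} \to C^MK_i(X) \to C^MK_i(\wt X) \oplus \{C^MK_i(Y_n)\} \to \{C^MK_i(\wt Y_n)\} \to \cdots
\]
in the category of pro-abelian groups.

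To conclude, I would show that the three pro-groups flanking $C^MK_i(X)$ all vanish for $i \le -d$. First, $C^MK_i(\wt X) = 0$ by \lemref{lem:Regular}, since $\wt X$ is regular. For $\{C^MK_i(Y_n)\}$, note that $\dim Y_n = \dim Y \le d-1$, hence $i \le -d \le -\dim Y$, so \lemref{lem:waytored} identifies $C^MK_i(Y_n)$ with $C^MK_i(Y_{\rm red})$; the latter vanishes by the induction hypothesis applied to $Y_{\rm red} \in \sC_k$. The pro-group $\{C^MK_{i+1}(\wt Y_n)\}$ is handled identically with $i$ replaced by $i+1$, using $i+1 \le -(d-1) \le -\dim \wt Y$. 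Exactness then forces $C^MK_i(X) = 0$. The main subtle point I anticipate is the verification that $C^MK$ inherits pro-cdh descent from $K$; this is essentially formal, but relies on the observation that flat base change to $X[M]$ commutes with forming infinitesimal thickenings, which is immediate at the level of defining ideals.
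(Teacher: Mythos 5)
Your proposal is correct and follows essentially the same route as the paper: induction on $d$ with \lemref{lem:waytored} to pass to the reduced case, \lemref{lem:Regular} for the base case and for $\wt X$, and the pro-cdh descent theorem of \cite{KST16} applied to the blow-up square with $[M]$ adjoined. Your explicit use of \lemref{lem:waytored} to replace the infinitesimal thickenings $Y_n$, $\wt Y_n$ by $Y_{\rm red}$, $\wt Y_{\rm red}$ before invoking the induction hypothesis is exactly the step the paper compresses into ``using induction on $d$ and regularity of $\wt X$.''
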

\begin{proof}
The lemma is equivalent to showing that $C^MK_i(X) = 0$ for $i \le -d$.
By \lemref{lem:waytored}, we can assume that $X$ is reduced.
Consequently, we can assume that $X[M]$ is reduced by 
\cite[Theorem~4.19]{BG} as $M$ is cancellative torsion-free.
Note that $X[M] \in \sC_k$ as $M$ is finitely generated.
We shall prove the lemma by induction on $d$. The $d = 0$ case follows
from \lemref{lem:Regular}. So we assume that $d \ge 1$.

Due to the assumption of weak resolution of singularity, we have an
abstract blow-up square (see \cite[Introduction]{CHSW} for definition)
as in ~\eqref{eqn:CarSq}.
Adjoining our monoid, this yields another abstract blow-up square in $\sC_k$:
\begin{equation}\label{eqn:CarSq-1}
\xymatrix@C1pc{
\widetilde{Y}[M] \ar@{->}[r]
     \ar@{->}[d]
&\widetilde{X}[M]
     \ar@{->}[d]
\\
Y[M] \ar@{->}[r]
     & X[M].   
}
\end{equation}

For any integer $n \ge 1$, we let $nY$ denote the infinitesimal thickening of 
$Y$ inside $X$ defined by the sheaf of ideals $\sI^n_Y$, where 
$\sI_Y$ is the sheaf of ideals on $X$ defining $Y$. We define $n\wt{Y}$
analogously. Then it is easy to see that 
$nY[M]$ (resp. $n\wt{Y}[M]$)
is an infinitesimal thickening of $Y[M]$ (resp. $\wt{Y}[M]$)
inside $X[M]$ (resp. $\wt{X}[M]$).

Since $C^MK_*(X)$ is a natural direct factor
of $K_*(X[M])$, we can apply the pro-cdh-descent theorem 
\cite[Theorem A]{KST16} to get an exact sequence of pro-abelian groups
\begin{equation}\label{eqn:CarSq-2}
\{C^MK_{i+1}(n\wt Y)\} \to C^MK_{i}(X) \to 
 \{C^MK_{i}(nY)\} \oplus C^MK_{i}(\wt X).
\end{equation}

Since $Y \subset X$ and $\wt{Y} \subset \wt{X}$ are nowhere dense closed
subsets, we see that $\dim(Y)$ and $\dim(\wt{Y})$ are less than $d$.
Using induction on $d$ and regularity of $\wt{X}$, we see that the
end terms of this exact sequence vanish. We conclude that
$C^MK_{i}(X) = 0$ for $i \le -d$. 
\end{proof}

Combining \thmref{thm:Res-sing} and \lemref{lem:scheme-main}, we get:

\begin{cor}\label{cor:Quais-exc-0}
Let $M$ be a cancellative torsion-free semi-normal monoid and $X$ a 
quasi-excellent
scheme over $\Q$ of dimension $d \ge 0$. Then $K_{i}(X) \to K_{i}(X[M])$ is an
isomorphism for all $i \le -d$.
\end{cor}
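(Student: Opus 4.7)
The plan is to apply Lemma~\ref{lem:scheme-main} directly with $k = \Q$ and with $\sC_k$ taken to be the full subcategory of $\wt{\Sch}_\Q$ consisting of quasi-excellent $\Q$-schemes. Essentially all the work has already been done; the corollary is just the observation that the hypotheses of Lemma~\ref{lem:scheme-main} are met in this case.

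First I would verify that $\sC_\Q$ is a legitimate choice, i.e.\ that it satisfies the two axioms for admitting weak resolution of singularities stated in \S\ref{sec:QE}. Axiom~(1), closure under finite type morphisms in $\wt{\Sch}_\Q$, is a standard property of (quasi-)excellent rings already recorded in \S\ref{sec:QE}: an essentially of finite type morphism from a quasi-excellent scheme lands again in a quasi-excellent scheme, and separatedness and the Noetherian property are preserved since $M$ is finitely generated and we stay within $\wt{\Sch}_\Q$. Axiom~(2), the existence of a resolution square of the form \eqref{eqn:CarSq} for every reduced $X \in \sC_\Q$, is precisely the content of Theorem~\ref{thm:Res-sing} (Hironaka--Temkin).

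With these verifications in hand, the corollary follows at once by applying Lemma~\ref{lem:scheme-main} to the given $X \in \sC_\Q$ of dimension $d$ and the given cancellative torsion-free semi-normal monoid $M$: the conclusion $K_i(X) \xrightarrow{\simeq} K_i(X[M])$ for $i \le -d$ is exactly what is asserted.

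There is no serious obstacle here; the corollary is a packaging statement. The only point requiring mild care is making sure one is entitled to invoke Theorem~\ref{thm:Res-sing} with $Y$ chosen as a nowhere dense closed subscheme of $X$ containing the singular locus, but this is already built into the statement of Theorem~\ref{thm:Res-sing} and was used in the proof of Lemma~\ref{lem:scheme-main} itself, so nothing new needs to be checked.
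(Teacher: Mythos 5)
Your proposal is correct and is exactly the paper's argument: the corollary is obtained by combining Theorem~\ref{thm:Res-sing} (quasi-excellent $\Q$-schemes admit weak resolution of singularities) with Lemma~\ref{lem:scheme-main}, taking $\sC_\Q$ to be the quasi-excellent schemes over $\Q$. Nothing further is needed.
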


We now state our main result on the extension of Weibel's conjecture
to monoid algebras.

\begin{thm}\label{thm:MT-1-*}
Let $M$ be a partially cancellative torsion-free monoid and let
$\sC_k$ be a subcategory of ${\wt{\Sch}}_k$ which admits weak resolution of
singularities. Let $X \in \sC_k$ be affine of dimension $d \ge 0$.
Assume that one of the following holds.
\begin{enumerate}
\item
$d \ge 1$.
\item
$M$ is cancellative and semi-normal.
\end{enumerate}
Then the map $K_{i}(X) \to K_{i}(X[M])$ is an isomorphism for all $i \le -d$.
\end{thm}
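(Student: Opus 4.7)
The plan is to reduce \thmref{thm:MT-1-*} in both cases to \lemref{lem:scheme-main}, which already settles the case of a cancellative torsion-free semi-normal monoid. In case~(2), $M$ is cancellative, torsion-free, and semi-normal by hypothesis, so \lemref{lem:scheme-main} applies directly. The entire argument therefore concentrates on case~(1), where $M$ is only assumed partially cancellative and torsion-free; the crucial extra information is $d \ge 1$, so that the target range $i \le -d$ sits inside $i \le -1$.

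For case~(1), the strategy is to peel off, in sequence, the three properties missing from $M$: being cancellative, being positive, and being semi-normal. First, \lemref{lem:AffMainVans} (which uses that $X = \Spec R$ is affine and relies on the Milnor square associated to a presentation $M = N/I$ with $N$ cancellative torsion-free) reduces us to proving that $K_i(R) \to K_i(R[N])$ is an isomorphism for $i \le -d$ and all cancellative torsion-free monoids $N$. Next, writing $N' = N \setminus U(N)$, \lemref{lem:RemovePosi} further reduces the problem to the positive cancellative torsion-free monoid $N'_{*} = N' \cup \{0\}$; this uses the Milnor square that splits off the free abelian group $U(N)$, together with \lemref{lem:Laurent-K_{-d}} to compute the $K$-theory of $R[U(N)]$ in the range $i \le -d$.

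Having arrived at the positive cancellative torsion-free case, \lemref{lem:RemoveSemi} identifies $K_i(R[N'_*])$ with $K_i(R[sn(N'_*)])$ for all $i \le -1$. This is where the hypothesis $d \ge 1$ is essential: it ensures that our range $i \le -d$ is contained in $i \le -1$, where the conductor-square argument of that lemma is valid. The resulting monoid $sn(N'_*)$ is cancellative, torsion-free, semi-normal, and still positive by \lemref{lem:ELM-monoid-*}, so \lemref{lem:scheme-main} completes the argument.

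The proof is therefore a clean assembly of the reductions built in \secref{sec:prelim} with \lemref{lem:scheme-main}. No single step is a serious obstacle; the only real subtlety is bookkeeping on which index range is preserved by each reduction, which is precisely what forces the dichotomy in the hypotheses. The borderline case $d = 0$ with $M$ not semi-normal is genuinely excluded from the theorem, consistently with \remref{remk:d=0}.
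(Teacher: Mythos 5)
Your proposal is correct and follows essentially the same route as the paper: the chain \lemref{lem:AffMainVans} $\to$ \lemref{lem:RemovePosi} $\to$ \lemref{lem:RemoveSemi} $\to$ \lemref{lem:scheme-main} in case (1), with \lemref{lem:scheme-main} applied directly in case (2), is exactly the paper's argument (the paper additionally invokes \lemref{lem:waytored} up front to pass to $X_{\red}$, but as you implicitly note this is already absorbed into the cited lemmas). Your explicit check via \lemref{lem:ELM-monoid-*} that the semi-normalization stays positive, and your isolation of $d\ge 1$ as the point where the index range $i\le -1$ of \lemref{lem:RemoveSemi} is needed, match the paper's bookkeeping.
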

\begin{proof}
In view of the main results of \cite{KST16}, 
the theorem is equivalent to proving that $C^MK_i(X) = 0$ for $i \le -d$.
Lemma~\ref{lem:waytored} allows us to assume that $X$ is reduced.
By \lemref{lem:AffMainVans},
we can assume that $M$ is a cancellative torsion-free monoid.  
Using \lemref{lem:RemovePosi}, we can further assume that $M$ is positive.
If $d \ge 1$, we can use \lemref{lem:RemoveSemi} to assume that 
$M$ is a cancellative torsion-free semi-normal monoid.
If $d = 0$, then $M$ is already given to be cancellative torsion-free 
semi-normal.

We have therefore reduced the proof of the theorem to the case
where $M$ is a cancellative torsion-free semi-normal monoid
and $X \in \sC_k$ a reduced affine scheme of dimension $d \ge 0$.
We can therefore apply \lemref{lem:scheme-main}
to conclude the proof.
\end{proof}

Combining \corref{cor:Quais-exc-0} and \thmref{thm:MT-1-*}, we get:

\begin{cor}\label{cor:Quais-exc-00}
Let $M$ be a partially cancellative torsion-free monoid and let
$X$ be a quasi-excellent affine scheme of dimension $d \ge 0$ over $\Q$.
Assume that one of the following holds.
\begin{enumerate}
\item
$d \ge 1$.
\item
$M$ is cancellative and semi-normal.
\end{enumerate}
Then the map $K_{i}(X) \to K_{i}(X[M])$ is an isomorphism for all $i \le -d$.
\end{cor}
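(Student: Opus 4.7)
The plan is to derive the corollary simply by invoking \thmref{thm:MT-1-*} with the subcategory $\sC \subset \wt{\Sch}_{\Q}$ consisting of all quasi-excellent Noetherian separated $\Q$-schemes. To do this I would first verify that $\sC$ satisfies the two axioms of the ``weak resolution of singularities'' property set out in \S\ref{sec:QE}. Closure of $\sC$ under finite type morphisms (axiom (1)) is the standard permanence property of quasi-excellence recalled at the start of \S\ref{sec:QE}, so no work is required here. The existence of an abstract blow-up square as in \eqref{eqn:CarSq} for any reduced $X \in \sC$, with $\wt X$ regular, is precisely the Temkin extension of Hironaka's theorem quoted as \thmref{thm:Res-sing}.

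Once $\sC$ is known to admit weak resolution of singularities, the hypotheses of \thmref{thm:MT-1-*} are met on the nose: the given $X$ is affine of dimension $d$ and belongs to $\sC$ by assumption, $M$ is partially cancellative torsion-free, and we are in either case (1) with $d \ge 1$ or case (2) with $M$ cancellative and semi-normal. Applying the theorem yields the desired isomorphism $K_i(X) \xrightarrow{\simeq} K_i(X[M])$ for all $i \le -d$.

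I do not anticipate any genuine obstacle; all the real work has been absorbed into \thmref{thm:MT-1-*} (whose proof already reduced to the semi-normal cancellative case and invoked pro-cdh descent) together with \thmref{thm:Res-sing}, and this corollary is merely their packaging. I would also note in passing that in case (2) the hypothesis of affineness is in fact unnecessary, since \corref{cor:Quais-exc-0} covers arbitrary quasi-excellent $\Q$-schemes of dimension $d$; one can either cite that corollary directly in case (2), or simply observe it as a strengthening after the main argument.
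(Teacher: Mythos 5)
Your proposal is correct and is essentially identical to the paper's own derivation, which obtains the corollary by applying \thmref{thm:MT-1-*} to the category of quasi-excellent $\Q$-schemes, with the weak resolution hypothesis supplied by \thmref{thm:Res-sing} (Hironaka/Temkin). Your closing remark that affineness is superfluous in case (2) is also consistent with the paper, which records that stronger statement separately as \corref{cor:Quais-exc-0}.
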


\vskip .3cm

\begin{remk}\label{remk:Dim-3}
One can easily check from the proof of \lemref{lem:scheme-main}
that the above 
proof of \thmref{thm:MT-1-*} remains valid (without any change) for all
$d$-dimensional schemes over any ground field
$k$ which admits weak resolution of singularities for schemes of
dimensions up to $d$. Since the resolution of singularities is known
to hold in dimension up to three over any ground field (see \cite{VO}),
we see that the assertion of \thmref{thm:MT-1-*} remains valid for
affine schemes over any arbitrary field as long as $d \le 3$.

We also remark that by the same reason as above, \thmref{thm:MT-1-*}
is also valid if $k$ is any field
and $X$ is either an affine normal crossing scheme or an affine toric variety 
over $k$. This is because $X$ admits resolution of singularities in both cases. 
\end{remk}

\subsection{Vanishing of $K_{< -d}(X[M])$}\label{sec:Vanish*}
In this subsection, we shall prove a vanishing result for
$K_{< -d}(X[M])$ if $X$ is a quasi-excellent $\Q$-scheme of dimension $d$ and $M$
is a cancellative torsion-free monoid. We need the following extension 
of \lemref{lem:Regular}.

\begin{lem}\label{lem:Regular-vanish}
Let $M$ be a cancellative torsion-free monoid and 
$X$ a regular Noetherian scheme of dimension $d \ge 0$. Then
$K_i(X[M]) = 0$ for $i < -d$.
\end{lem}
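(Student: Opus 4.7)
The plan is to prove the vanishing by Zariski descent on $X$, reducing the global statement to the affine local case, which has already been handled in \lemref{lem:reg-neg}.

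Using the functorial splitting $K(X[M]) \simeq K(X) \amalg C^MK(X)$, it suffices to prove separately that $K_i(X) = 0$ and $C^MK_i(X) = 0$ for all $i < -d$. The vanishing of $K_i(X)$ in this range is Weibel's $K$-dimension conjecture for the regular Noetherian scheme $X$; it follows from \cite[Theorem~B]{KST16}, or directly from the Thomason-Trobaugh descent spectral sequence on $X$ together with the classical vanishing of negative $K$-theory on regular local rings.

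For $C^MK_i(X)$, I would invoke the strongly convergent spectral sequence of \thmref{thm:TT-gen} with $Z = \emptyset$:
\[
E^{p,q}_2 = H^p(X, C^M\sK_{q, X}) \Rightarrow C^MK_{q-p}(X).
\]
The stalk of $C^M\sK_{q, X}$ at a point $x \in X$ is $C^MK_q(\sO_{X,x})$, which is a direct summand of $K_q(\sO_{X,x}[M])$. Since every local ring $\sO_{X,x}$ is a regular Noetherian ring and $M$ is in particular partially cancellative torsion-free, \lemref{lem:reg-neg} gives $K_q(\sO_{X,x}[M]) = 0$ for all $q \le -1$, hence $C^M\sK_{q, X} = 0$ in the same range. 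For any $i < -d$, a nonzero $E^{p,q}_2$-term contributing to $C^MK_i(X)$ would have $q - p = i$, $q \ge 0$, and $p \le d$ (the latter by Grothendieck vanishing on a Noetherian scheme of dimension $d$), which forces $p = q - i > d$, a contradiction. Hence $C^MK_i(X) = 0$ for $i < -d$, completing the argument. The substantive input is \lemref{lem:reg-neg}; the Zariski descent and the dimension count should present no real obstacle.
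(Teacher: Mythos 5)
Your proposal is correct and follows essentially the same route as the paper: split off $C^MK(X)$, handle $K_i(X)$ by the known vanishing for regular Noetherian schemes, and kill $C^MK_i(X)$ for $i<-d$ via the Thomason--Trobaugh spectral sequence of \thmref{thm:TT-gen} together with the local vanishing on regular (local) rings, which in both treatments ultimately rests on Gubeladze's theorem (the paper cites \cite[Theorem~1.3]{Gub-4} directly for the affine case, while you route it through \lemref{lem:reg-neg}, whose proof uses the same input). The dimension count $p\le d$, $q\ge 0$, $q-p=i<-d$ is exactly the argument the paper alludes to by pointing back to the proof of \lemref{lem:waytored}.
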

\begin{proof}
Since $X$ is regular, we can assume that it is connected. We first assume that 
$X$ is affine. In this case, it follows from \cite[Theorem~1.3]{Gub-4}
that $K_{i}(X[M]) = 0$ for $i \le -1$. 
Suppose now that $X$ is any Noetherian scheme of dimension $d$.
Using \cite[Theorem~B]{KST16}, it suffices to show that
$C^MK_i(X) = 0$ for $i < -d$. But this follows immediately 
from the spectral sequence ~\eqref{eqn:TT-S}
(see the proof of \lemref{lem:waytored}).
\end{proof}

\begin{thm}\label{thm:scheme-main-vanish}
Let $M$ be a cancellative torsion-free monoid and $X$ a quasi-excellent 
$\Q$-scheme of dimension $d$. Then $K_i(X[M]) = 0$ for $i < -d$.
\end{thm}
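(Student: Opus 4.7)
The plan is to imitate the proof of \lemref{lem:scheme-main}, but drawing the regularity input from \lemref{lem:Regular-vanish} in place of \lemref{lem:Regular}. Since $X$ is a quasi-excellent Noetherian scheme of dimension $d$, Weibel's conjecture as proved in \cite[Theorem~B]{KST16} yields $K_i(X) = 0$ for $i < -d$. The natural splitting $K_i(X[M]) \simeq K_i(X) \oplus C^MK_i(X)$ therefore reduces the claim to showing $C^MK_i(X) = 0$ for $i < -d$.

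I would proceed by induction on $d$. For the base case $d = 0$, \lemref{lem:waytored} gives $C^MK_i(X) \simeq C^MK_i(X_{\rm red})$ for $i \le 0$ (using that $M$ is cancellative torsion-free). Since $X_{\rm red}$ is regular of dimension zero, \lemref{lem:Regular-vanish} gives $K_i(X_{\rm red}[M]) = 0$ for $i < 0$, whence $C^MK_i(X_{\rm red}) = 0$ for $i < 0$, as desired.

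For the inductive step with $d \ge 1$, assume the statement in all dimensions less than $d$. Again by \lemref{lem:waytored} we may replace $X$ by $X_{\rm red}$. Applying Hironaka--Temkin weak resolution of singularities (\thmref{thm:Res-sing}) to the quasi-excellent $\Q$-scheme $X$, I obtain a Cartesian square as in ~\eqref{eqn:CarSq} with $\wt X$ regular and $Y, \wt Y$ nowhere dense closed subschemes. Base-changing along $\Spec \Z[M]$ turns this into an abstract blow-up square of $X[M]$, and because $C^MK$ is a natural retract of $K(-[M])$, the pro-cdh descent of \cite[Theorem~A]{KST16} for $K$ descends to $C^MK$, yielding the exact sequence of pro-abelian groups
\[
\{C^MK_{i+1}(n\wt Y)\} \to C^MK_{i}(X) \to \{C^MK_{i}(nY)\} \oplus C^MK_{i}(\wt X).
\]
For $i < -d$, the rightmost term vanishes by \lemref{lem:Regular-vanish} applied to $\wt X$ (regular of dimension $\le d$). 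Since $\dim(nY), \dim(n\wt Y) \le d - 1$, both $i$ and $i+1$ lie strictly below $-(d-1)$, so the inductive hypothesis forces the other two pro-systems to vanish termwise, giving $C^MK_i(X) = 0$.

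The only subtlety to verify is that $C^MK$ inherits pro-cdh descent from $K$; this is immediate from the splitting $K(X[M]) \simeq K(X) \amalg C^MK(X)$ together with the fact that $(-) \times_{\Spec \Z} \Spec \Z[M]$ carries abstract blow-up squares to abstract blow-up squares, exactly as already exploited in \lemref{lem:scheme-main}. No further obstacle appears: unlike \lemref{lem:scheme-main}, I am only targeting vanishing strictly below $-d$, so the delicate semi-normality reduction needed for the isomorphism at $i = -d$ is avoided here and the cancellative torsion-free hypothesis suffices throughout.
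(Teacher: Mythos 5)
Your proposal is correct and follows essentially the same route as the paper: induction on $d$, reduction to the reduced case via Lemma \ref{lem:waytored}, Hironaka--Temkin resolution, and pro-cdh descent from \cite[Theorem~A]{KST16}, with Lemma \ref{lem:Regular-vanish} supplying the regular case. The only (immaterial) difference is that you split off $C^MK$ and invoke \cite[Theorem~B]{KST16} for the $K_i(X)$ summand, whereas the paper runs the same pro-cdh argument directly on $K_i(X[M])$.
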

\begin{proof}
This proof is identical to that of \lemref{lem:scheme-main}.
We give the sketch. 
We prove the theorem by induction on $d$. The $d = 0$ case follows
from \lemref{lem:Regular-vanish}. So we assume that $d \ge 1$.

By \lemref{lem:waytored}, we can assume that $X$ is reduced.
Consequently, we can assume that $X[M]$ is reduced by 
\cite[Theorem~4.19]{BG} as $X$ is a $\Q$-scheme.

By \thmref{thm:Res-sing}, we have an
abstract blow-up square as in ~\eqref{eqn:CarSq}.
Adjoining our monoid, this yields another abstract blow-up square:
\begin{equation}\label{eqn:CarSq-1**}
\xymatrix@C1pc{
\widetilde{Y}[M] \ar@{->}[r]
     \ar@{->}[d]
&\widetilde{X}[M]
     \ar@{->}[d]
\\
Y[M] \ar@{->}[r]
     & X[M].   
}
\end{equation}

Now, as we did in the proof of \lemref{lem:scheme-main},
we can apply the pro-cdh-descent theorem 
\cite[Theorem A]{KST16} to get an exact sequence of pro-abelian groups
\begin{equation}\label{eqn:CarSq-2**}
\{K_{i+1}(n\wt Y[M])\} \to K_{i}(X[M]) \to 
 \{K_{i}(nY[M])\} \oplus K_{i}(\wt X[M]).
\end{equation}

Since $Y \subset X$ and $\wt{Y} \subset \wt{X}$ are nowhere dense closed
subsets, we see that $\dim(Y)$ and $\dim(\wt{Y})$ are less than $d$.
Using induction on $d$ and regularity of $\wt{X}$, we see that the
end terms of this exact sequence vanish. We conclude that
$K_{i}(X[M]) = 0$ for $i < -d$. 
\end{proof}

\subsection{The positive characteristic case}\label{sec:Inv-p}
The weak resolution of singularities in all dimensions is yet unknown
if the ground field $k$ has positive characteristic.
Nevertheless, we can show that \thmref{thm:MT-1-*} is valid
in this case too if we invert 
${\rm char}(k)$. Using the reduction steps of \S~\ref{sec:prelim},
this turns out to be actually an easy consequence of 
the homotopy invariance property of Weibel's homotopy $K$-theory. 
More precisely, we can prove the following.
 
\begin{thm}\label{thm:MT-1-p}
Let $k$ be a field of characteristic $p > 0$. Let
$X \in {\wt{\Sch}}_k$ be an affine scheme of dimension $d \ge 0$.
Assume that one of the following holds.
\begin{enumerate}
\item
$d \ge 1$.
\item
$M$ is cancellative and semi-normal.
\end{enumerate}
Then the map $K_{i}(X)[\tfrac{1}{p}] \to K_{i}(X[M])[\tfrac{1}{p}]$ is an 
isomorphism for all $i \le -d$.
\end{thm}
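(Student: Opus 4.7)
The plan is to reduce via the lemmas of \S~\ref{sec:prelim} to the case where $M$ is semi-normal, and then exploit the homotopy invariance of Weibel's $KH$ combined with the positive-characteristic form of Weibel's conjecture. The reduction lemmas (Lemmas~\ref{lem:AffMainVans}, \ref{lem:RemovePosi}, and \ref{lem:RemoveSemi}) are built from Mayer-Vietoris sequences for Milnor squares (Proposition~\ref{prop:Milnor*}) together with Lemma~\ref{lem:Laurent-K_{-d}}; each remains valid after the exact localization $-\otimes_{\Z}\Z[\tfrac{1}{p}]$ applied termwise. Applying them in sequence reduces us to the situation in which $M$ is positive, cancellative, torsion-free and semi-normal: when $d = 0$ this is guaranteed directly by hypothesis~(2), while when $d \ge 1$ semi-normality is delivered in the last step by Lemma~\ref{lem:RemoveSemi}.

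For such an $M$, the crucial input is the homotopy invariance of $KH$: the natural map $KH(X) \to KH(X[M])$ is a weak equivalence for any Noetherian $X$. This is the $KH$-upgrade of the Gubeladze--Swan theorem (\cite[Corollary~1.4]{Swan}) stating $K_0(R) \cong K_0(R[M \oplus \Z])$ for regular $R$, obtained by passing to the simplicial ring $R[\Delta^\bullet]$ and using the cdh-descent and $\A^1$-invariance of $KH$ to propagate from regular bases to arbitrary Noetherian ones. In particular, for every $i$ one has $KH_i(X)[\tfrac{1}{p}] \cong KH_i(X[M])[\tfrac{1}{p}]$.

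It remains to identify $K_i[\tfrac{1}{p}]$ with $KH_i[\tfrac{1}{p}]$ in the range $i \le -d$ on both $X$ and $X[M]$. For $X$, this is unconditional by Kerz--Strunk--Tamme \cite{KST16}. For $X[M]$, whose dimension is $d + \rk(M)$, \cite{KST16} alone gives the identification only in the narrower range $i \le -(d+\rk M)$, and closing the intermediate range $-(d+\rk M) < i \le -d$ is the main obstacle. I plan to handle this using the positive-characteristic form of Weibel's conjecture from \cite{Weibel-1}, \cite{GH} and \cite{Krishna}, which after inverting $p$ identifies $K$ with $KH$ in all negative degrees on $\F_p$-schemes of suitable type; as an alternative, one can reduce to the case of regular $X$ via cdh-descent of $KH[\tfrac{1}{p}]$ together with de Jong alterations (which become cdh-covers after inverting $p$), where Lemma~\ref{lem:reg-neg} forces $K_i(X[M]) = 0 = KH_i(X[M])$ for $i \le -1$ and the identification becomes immediate. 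Combining this comparison with the $KH$-homotopy invariance of the previous paragraph and the KST identification $K_i(X)[\tfrac{1}{p}] \cong KH_i(X)[\tfrac{1}{p}]$ for $i \le -d$ yields the desired conclusion.
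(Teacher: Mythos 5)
Your overall strategy (invert $p$, pass to $KH$, use homotopy invariance) is the right one, but the step you yourself flag as ``the main obstacle'' --- identifying $K_i[\tfrac{1}{p}]$ with $KH_i[\tfrac{1}{p}]$ on $X[M]$ in the range $-(d+\rk(M)) < i \le -d$ --- is where the proposal breaks down, and neither of your proposed fixes works. The results of \cite{Weibel-1}, \cite{GH} and \cite{Krishna} are vanishing and $K_{-d}$-regularity statements below the dimension, proved under resolution-of-singularities or surface hypotheses; they do not identify $K[\tfrac{1}{p}]$ with $KH[\tfrac{1}{p}]$ in an intermediate range of negative degrees on a general affine $\F_p$-scheme. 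Your alternative is also flawed: a de Jong (or Gabber) alteration does not become a cdh-cover after inverting $p$ --- it is at best a cover for the $\ell dh$-topology, and descent of $KH$ for that topology is a separate, nontrivial matter. The point you are missing is that no comparison in a limited range is needed at all: for any scheme on which $p$ is nilpotent, the map $K(Y)[\tfrac{1}{p}] \to KH(Y)[\tfrac{1}{p}]$ is an equivalence in \emph{all} degrees, because the $NK$-groups of $\Z/p^k$-algebras are $p$-groups (this is \cite[Exercise~9.11(h)]{TT}, via the Witt-vector module structure on $NK_*$). This is exactly how the paper proceeds: it replaces $K[\tfrac{1}{p}]$ by $KH[\tfrac{1}{p}]$ on both $X$ and $X[M]$ at the outset, with no appeal to \cite{KST16} for the comparison.

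Once that substitution is made, your second paragraph is also heavier than necessary. After the $KH$-analogues of the reductions in Lemmas~\ref{lem:AffMainVans} and \ref{lem:RemovePosi} one may take $M$ positive, cancellative and torsion-free; then $R[M]$ is a positively graded $R$-algebra with degree-zero part $R$ (grade via a homomorphism $M \to \N$ with kernel $U(M)=0$, cf.\ Lemma~\ref{lem:extreme-crtn}), and Weibel's graded homotopy invariance \cite[Theorem~1.2]{Weibel89} gives $KH(R) \simeq KH(R[M])$ directly. No reduction to semi-normal monoids via Lemma~\ref{lem:RemoveSemi} is needed, and no ``$KH$-upgrade of Gubeladze--Swan'' propagated from regular to arbitrary Noetherian bases by cdh-descent is needed --- that propagation would again founder on the absence of resolution of singularities in characteristic $p$.
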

\begin{proof}
In this proof, we shall work only with $\Z[\tfrac{1}{p}]$-modules.
In particular, $K_i(X)$ will mean $K_i(X)[\tfrac{1}{p}]$ for simplicity of
notation. In this case, we can replace the spectrum $K(X)$ by $KH(X)$
(see \cite[Exercise 9.11(h)]{TT}), where the latter is Weibel's
homotopy $K$-theory \cite{Weibel89}.

Now, we can assume $M$ to be cancellative torsion-free and positive
by using the $KH$-analogues of Lemmas~\ref{lem:AffMainVans} and 
~\ref{lem:RemovePosi}.
If we now write $X = \Spec(R)$, it follows that $R[M]$ is a positively graded 
$R$-algebra.
Therefore, by the homotopy invariance property \cite[Theorem 1.2]{Weibel89}, 
we get $KH_i(X)\cong KH_i(X[M])$ for $i \le -d$.
\end{proof}  

\section{Counterexample for non-affine schemes}\label{sec:CExm}
Recall that Weibel's $K_{-d}$-regularity conjecture is true for all Noetherian
separated schemes \cite{KST16}. We saw in the previous section that this
holds also for cancellative torsion-free semi-normal monoids. 
However, we shall show in this section 
that the monoid 
extension of $K_{-d}$-regularity conjecture 
is not valid for non-affine schemes if $M$ is not semi-normal.
This shows that the extension of Weibel's  $K_{-d}$-regularity conjecture to
monoids is a subtle question.

We let $M \subset \Z_+$ be the submonoid generated by $\{2,3\}$.
It is clear that $M$ is cancellative torsion-free, but not semi-normal.
It is also clear that the inclusion $R[M] \subset R[\Z_+]$ is same as the
inclusion $R[x^2, x^3] \subset R[x]$ for any ring $R$.
We fix a field $k$ of characteristic zero and let $C = \Spec(k[M])$.

\begin{thm}\label{thm:Counter}
Let $X$ be a connected smooth projective curve over $k$ of positive genus. 
Then the map $K_{-1}(X) \to K_{-1}(X[M])$ is not an isomorphism.
\end{thm}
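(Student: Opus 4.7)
The plan is to use the conductor Milnor square coming from the inclusion $k[M] = k[x^2, x^3] \subset k[x]$. The conductor is $(x^2) \subset k[x]$, with $k[M]/(x^2) = k$ and $k[x]/(x^2) \cong k[\epsilon]/(\epsilon^2)$, so base-changing to $X$ produces the Cartesian square
\[
\xymatrix@C1pc{
X_\epsilon \ar[r] \ar[d] & X \times \A^1 \ar[d] \\
X \ar[r] & X[M],
}
\]
where $X_\epsilon := X \times_k \Spec(k[\epsilon]/(\epsilon^2))$, $X \hookrightarrow X[M]$ is the cusp closed immersion, and $X \times \A^1 \to X[M]$ is the normalization. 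This square is simultaneously a Zariski-local Milnor conductor square and an abstract blow-up square, so a Zariski-local application of $\propref{prop:Milnor*}$ combined with the spectral sequence of $\thmref{thm:TT-gen}$ (equivalently, the pro-cdh descent of \cite{KST16}) yields a Mayer--Vietoris long exact sequence in non-connective $K$-theory.

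Next I would feed in the vanishings at the negative end. Since $X$ is regular of dimension one, $K_{-1}(X) = K_{-1}(X \times \A^1) = 0$ by Weibel's conjecture and homotopy invariance. Moreover, $K_{-1}(X_\epsilon) = K_{-1}(X) = 0$ by the scheme-theoretic analogue of $\lemref{lem:affred}$; concretely, the only contributions to $K_{-1}(X_\epsilon)$ in the spectral sequence ~\eqref{eqn:TT-S} come from $(p,q) = (0,-1)$ and $(1,0)$, and for such $q \le 0$ the Zariski sheaf $\sK_q$ is nil-invariant. The Mayer--Vietoris sequence therefore collapses to a surjection
\[
K_0(X \times \A^1) \oplus K_0(X) \xrightarrow{\phi} K_0(X_\epsilon) \twoheadrightarrow K_{-1}(X[M]).
\]
Identifying $K_0(X \times \A^1) \cong K_0(X)$ by homotopy invariance, and noting that the composition $X_\epsilon \hookrightarrow X \times \A^1 \to X$ equals the structural retraction $r \colon X_\epsilon \to X$, the image of $\phi$ equals the image of $r^* \colon K_0(X) \to K_0(X_\epsilon)$. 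Since the closed immersion $i \colon X \hookrightarrow X_\epsilon$ satisfies $r \circ i = \id_X$, the map $r^*$ is split injective with retraction $i^*$, so
\[
K_{-1}(X[M]) \;\cong\; \ker\bigl(i^* \colon K_0(X_\epsilon) \to K_0(X)\bigr).
\]

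Finally I would produce a non-zero class in this kernel using the positive-genus assumption. Taking cohomology of the infinitesimal exponential sequence
\[
0 \to \sO_X \xrightarrow{s \mapsto 1+s\epsilon} \sO_{X_\epsilon}^{\times} \to \sO_X^{\times} \to 1,
\]
together with the surjectivity of $H^0(X, \sO_{X_\epsilon}^\times) \twoheadrightarrow H^0(X, \sO_X^\times) = k^\times$, identifies $\ker(\Pic(X_\epsilon) \to \Pic(X)) \cong H^1(X, \sO_X) = k^{g}$, which is non-zero precisely when $g = g(X) \ge 1$. Any non-trivial $L \in \Pic(X_\epsilon)$ with $i^* L \cong \sO_X$ then yields a class $[L] - [\sO_{X_\epsilon}] \in K_0(X_\epsilon)$ which lies in $\ker(i^*)$ but is non-zero because the determinant map $K_0(X_\epsilon) \to \Pic(X_\epsilon)$ detects it. This forces $K_{-1}(X[M]) \ne 0 = K_{-1}(X)$, proving the theorem. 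The only technical point that requires care is the scheme-theoretic Mayer--Vietoris sequence in negative $K$-theory for our Milnor square, but since the square is both a conductor square and an abstract blow-up square, either the Zariski-local application of $\propref{prop:Milnor*}$ or the pro-cdh descent machinery already invoked in the paper supplies it.
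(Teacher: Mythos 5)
Your proof is correct, and it takes a genuinely different route from the paper's. The paper keeps track of all infinitesimal thickenings $nX$ and $nX_T$ of the conductor locus, applies the pro-cdh-descent theorem of Kerz--Strunk--Tamme to the abstract blow-up square to identify the pro-systems $\{K_{-1}(X_C,nX)\}$ and $\{K_{-1}(X_{\A^1_k},nX_T)\}$, and then proves a separate claim that the latter pro-system surjects onto $K_{-1}(X_{\A^1_k},X_T)\cong H^1(X,\sO_X)$ by means of a truncated exponential on each thickening $nX_T$ (which is where $\Char(k)=0$ is actually used). You bypass the pro-systems entirely: the conductor square is affine-locally a Milnor square, so \propref{prop:Milnor*} makes the total homotopy fiber of the corresponding square of $K$-theory spectra acyclic in degrees $\le 0$ over affines, and Zariski descent over the one-dimensional base then kills it in degrees $\le -1$, which is exactly the range needed for your Mayer--Vietoris sequence. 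That descent step is the one point to write out in full --- \thmref{thm:TT-gen} is stated only for the presheaf $C^M\sK$, but the identical Brown--Gersten argument applies to the total-fiber presheaf --- and note that of your two suggested justifications only this one works as stated: pro-cdh descent by itself returns the pro-version of the sequence (with the thickenings), not the plain one. What your route buys: a shorter argument, the precise answer $K_{-1}(X[M])\cong\ker\bigl(i^*\colon K_0(X_\epsilon)\to K_0(X)\bigr)\cong\ker\bigl(\Pic(X_\epsilon)\to\Pic(X)\bigr)\cong H^1(X,\sO_X)$ rather than merely a surjection onto $H^1(X,\sO_X)$, and independence from the characteristic, since the first-order exponential $s\mapsto 1+s\epsilon$ requires no division. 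What the paper's route buys: it runs entirely on the pro-cdh machinery already set up for Theorems~\ref{thm:MT-1} and \ref{thm:scheme-main-vanish-Intro}, so no new descent statement has to be established there.
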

\begin{proof}
Since $K_{-1}(X) = 0$, the theorem is equivalent to showing that
$K_{-1}(X[M]) \neq 0$.

We consider the conductor square
\begin{equation}\label{eqn:Counter-0}
\xymatrix@C1pc{
S \ar[r]^-{u} \ar@{=}[dr] & T \ar[r]^-{\iota'} \ar[d]^-{f'} & \A^1_k \ar[d]^-{f} 
& & S \times X \ar[r]^-{u} \ar@{=}[dr] & T  \times X \ar[r]^-{\iota'} 
\ar[d]^-{f'} &  \A^1_k  \times X \ar[d]^-{f} \\
& S \ar[r]^-{\iota} & C & & & S  \times X \ar[r]^-{\iota} & C \times X,}
\end{equation}
where the square on the right is obtained by the one on the left by the
base change via the map $X \to \Spec(k)$. The map $f$ is the normalization 
map, $S \cong \Spec(k)$ and $T \cong \Spec({k[x]}/{(x^2)})$.
The inclusion $u: S \to T$ is induced by the augmentation 
${k[x]}/{(x^2)} \surj k$.

We write $X_Y = X \times Y$ for any $k$-scheme $Y$. Note that
$X_S \cong X$. For any $n \ge 1$, we
have a commutative diagram of relative $K$-theory exact sequences:
\begin{equation}\label{eqn:Counter-0-1}
\xymatrix@C.8pc{
K_0(X_C) \ar[d] \ar[r] & K_0(n X) \ar[r] \ar[d] & K_{-1}(X_C, nX) \ar[d] 
\ar[r]^-{\alpha_n} & K_{-1}(X_C) \ar[r] & 0 \\
K_0(X_{\A^1_k}) \ar[r] & K_0(n X_T) \ar[r]^-{\beta_n} & 
K_{-1}(X_{\A^1_k}, nX_T) \ar[r] & 0, &}
\end{equation} 
where the vertical arrows are induced by $f$.
Note that for a closed immersion $W \subset Y$ defined by the sheaf
of ideals $\sI_W$, the subscheme $nW \subset Y$ is defined by $\sI^n$. 
The map $\alpha_n$ is surjective because its cokernel will otherwise map 
injectively
into $K_{-1}(nX)$. But this latter term is isomorphic to $K_{-1}(X)$
by \lemref{lem:waytored} which is zero. The map $\beta_n$ is surjective 
because $K_{-1}(X_{\A^1_k}) = 0$.

When $n =1$, we have the situation
\begin{equation}\label{eqn:Counter-0-2}
\xymatrix@C.8pc{
K_0(X) \ar[r]^-{p^*} \ar[dr]_-{\cong} & K_0(X_C) \ar[d] \ar[r]^-{\iota^*} & 
K_0(X) \ar[d]^-{f'^*} \\
& K_0(X_{\A^1_k}) \ar[r]^-{\iota'^*} & K_0(X_T) \ar@{->>}[r]^-{\beta_1} & 
K_{-1}(X_{\A^1_k}, X_T),}
\end{equation} 
where $p^*$ is induced by the projection $X_C \to X$. It follows that the
composite horizontal arrow on the top is identity. The indicated isomorphism
is by the homotopy invariance. A diagram chase shows that $\iota^*$ is 
split surjective and $K_{-1}(X_{\A^1_k}, X_T) \cong 
{\rm Coker}(\iota'^*) = {\rm Coker}(f'^*)$.
Using this, we also see that $\alpha_1$ is an isomorphism.
In particular, the theorem is equivalent to showing that 
$K_{-1}(X_C, X) \neq 0$.

We now consider the inclusion of inductive systems of closed pairs
$\{(X_{\A^1_k}, X_T)\} \inj \{(X_{\A^1_k}, nX_T)\}$.
We make the following

{\bf Claim.} The induced map of pro-abelian groups
$\{K_{-1}(X_{\A^1_k}, nX_T)\} \to K_{-1}(X_{\A^1_k}, X_T)$ is surjective.

To prove the claim, it suffices to show that the map
$K_{-1}(X_{\A^1_k}, nX_T) \to K_{-1}(X_{\A^1_k}, X_T)$ is surjective for all
$n \ge 1$.

We fix $n \ge 1$ and consider the commutative diagram 
\begin{equation}\label{eqn:Counter-0-3}
\xymatrix@C.8pc{
K_0(nX_T) \ar@{->>}[r] \ar[d] & K_{-1}(X_{\A^1_k}, nX_T) \ar[d] \\
K_0(X_T) \ar@{->>}[r] & K_{-1}(X_{\A^1_k}, X_T).}
\end{equation} 
Because $K_{-1}(X_{\A^1_k}) = 0$,  the relative $K$-theory
exact sequence tells us that the horizontal arrows are surjective.
It suffices therefore to show that the left vertical arrow is surjective.

Since $\dim(X) = 1$, it is easy to check using the Thomason-Trobaugh
spectral sequence \cite[Theorem~10.3]{TT} that there is a functorial
split exact sequence
\begin{equation}\label{eqn:Counter-0-4}
0 \to H^1(nX_T, \sO^{\times}_{nX_T}) \to K_0(nX_T) \to
H^0(X, \Z) \to 0.
\end{equation}

To compute the left term, we consider the short exact sequence of sheaves
\[
0 \to (1 + \sI_{X}) \to \sO^{\times}_{nX_T} \to \sO^{\times}_X \to 0,
\]
where $\sI_X$ is the sheaf of ideals on $nX_T$ defining $X$.
Since $nX_T \cong X \times \Spec({k[x]}/{(x^{2n})})$ and ${\rm char}(k) =0$,
this sequence is split and the exponential map 
${\rm exp}: 
\sO_X \otimes_k {(x)}/{(x^{2n})} \to (1 + \sI_{X})$ is an isomorphism of
the sheaves of abelian groups.
We thus have a commutative diagram of the
split exact sequences of sheaves of abelian groups
\begin{equation}\label{eqn:Counter-0-5}
\xymatrix@C.8pc{
0 \ar[r] & \sO_X \otimes_k {(x)}/{(x^{2n})} \ar[d] \ar[r] &
\sO^{\times}_{nX_T} \ar[r] \ar[d] &  \sO^{\times}_X \ar@{=}[d] \ar[r] &  0 \\
0 \ar[r] & \sO_X \otimes_k {(x)}/{(x^{2})} \ar[r] &
\sO^{\times}_{X_T} \ar[r] &  \sO^{\times}_X  \ar[r] &  0.}
\end{equation}

This yields an associated commutative diagram of split exact sequences
of the first cohomology groups. On the other hand, the left vertical
arrow in ~\eqref{eqn:Counter-0-5} is split surjective because
${(x)}/{(x^{2n})} \cong \stackrel{2n-1}{\underset{i = 1}\oplus} \
{(x^i)}/{(x^{i+1})}$. It follows that
the map $H^1(nX_T, \sO^{\times}_{nX_T}) \to H^1(X_T, \sO^{\times}_{X_T})$
is split surjective. Using ~\eqref{eqn:Counter-0-4}, we conclude that
the map $K_0(nX_T) \to K_0(X_T)$ is surjective. We have thus proven
the claim.

Since $\sO_X \cong \sO_X \otimes_k {(x)}/{(x^{2})}$, it also follows
from ~\eqref{eqn:Counter-0-4} and ~\eqref{eqn:Counter-0-5} that 
$K_0(X_T) \cong H^1(X, \sO_X) \oplus K_0(X)$. Using this in
~\eqref{eqn:Counter-0-2}, we get

\begin{equation}\label{eqn:Counter-0-6}
K_{-1}(X_{\A^1_k}, X_T) \cong {\rm Coker}(\iota'^*) \cong
{\rm Coker}(f'^*) \cong H^1(X, \sO_X).
\end{equation}

In the final step, we consider the commutative diagram of pro-abelian
groups

\begin{equation}\label{eqn:Counter-0-7}
\xymatrix@C.8pc{
\{K_{-1}(X_C, nX)\} \ar[r]^-{\psi} \ar[d]_-{f^*} & K_{-1}(X_C, X) \ar[d]^-{f^*} \\
\{K_{-1}(X_{\A^1_k}, nX_T)\} \ar[r]^-{\psi'} & K_{-1}(X_{\A^1_k}, X_T),}
\end{equation}
where $\psi$ is induced by the inclusion $(X_C, X) \inj (X_C, nX)$.
The arrow $\psi'$ is similarly defined. 
By the pro-cdh-descent theorem \cite[Theorem~A]{KST16}
(note that \cite[Theorem~1.1]{Krishna-2} suffices for the present case), 
applied to the
abstract blow-up square on the right of ~\eqref{eqn:Counter-0},
we see that the left vertical arrow in ~\eqref{eqn:Counter-0-7} is
an isomorphism. The bottom horizontal arrow is surjective by the above
claim. It follows that $f^* \circ \psi = \psi' \circ f^*$ is surjective.

We conclude that the map $f^*: K_{-1}(X_C, X) \to K_{-1}(X_{\A^1_k}, X_T)$
is surjective. We can now apply ~\eqref{eqn:Counter-0-6} to see that
$K_{-1}(X_C, X)$ can not be zero if the genus of $X$ is positive.
The proof of the theorem is complete.
\end{proof}

\section{$SK_0$ of monoid algebras: some reductions}
\label{sec:Redn}
Our goal in the next two sections is to prove Theorems~\ref{thm:MT-2}
and ~\ref{thm:MT-4}.
In this section, we establish some reduction steps which go into the
proof of \thmref{thm:MT-4}.

\subsection{Structure of positive cancellative
semi-normal monoids}\label{sec:SNM}
Let $M$ be a cancellative monoid with possible torsion.
We shall denote the torsion part of $\gp(M)$ by $t(M)$.
We let $\overline{M}:= {\rm Image}(M \rightarrow \gp(M)/t(M))$. 
There is an identification $\gp(M) = \gp(\overline{M}) \times t(M)$.

Suppose now that $M$ is a cancellative torsion-free monoid and let 
$\gp(M) \simeq \Z^r$.
Recall from \cite[\S~5]{Swan} that the interior of $M$ is its subset
consisting of all elements $a \in M$ such that for all $b \in M$, there
is an integer $n > 0$ and $c \in M$ such that $na = b + c$.
We denote this set by ${\rm Int}(M)$.
Note that if $M$ is generated by a finite set
$\{x_1, \ldots , x_n\}$, then $\stackrel{n}{\underset{i =1} \sum} x_i \in
{\rm Int}(M)$. In particular, ${\rm Int}(M) \neq \emptyset$.

Since $\gp(M) \simeq \Z^r$, we can view $M$ as the set of 
of integral points in the vector space $\mathbb{R}^r$. 
We let $\R_+M$ denote the set of non-negative $\R$-linear combinations
of elements in $M$. In this case, we have ${\rm Int}(M) = 
{\rm Int}(\R_+M) \cap \Z^r$, where ${\rm Int}(\R_+M)$ is the topological
interior of the cone $\R_+M$.

If $M$ is positive cancellative but not necessarily torsion-free, then $\ov{M}$
is a positive cancellative torsion-free monoid. We shall let 
$F(M):=\{ F_0,\ldots, F_r\}$ denote the set of faces of the cone 
$\mathbb{R}_+\overline{M}$ including $0$ and
$\mathbb{R}_+\overline M$.
We index  $F(M)$ so that
$\dim(F_i) < \dim(F_j)$ implies $i<j$ for all $F_i, F_j \in F(M)$.
Let $\rk(M) = m$  so that $\dim(\R_+ \ov{M}) = m$. 
Given two monoids $L, N$, we define
$L\leftthreetimes N:= L\times N \setminus \{(0,n)| \ n \in N \setminus \{0\}\}$.

We shall use the following description of the
semi-normalization of $M$, due to Gubeladze \cite[Lemma~9.1]{Gub-7}.

\begin{lem}\label{lem:Gub-Semi}
Let $M$ be a positive cancellative monoid (possibly with torsion).
For every $F \in F(M)$, there is a subgroup $T_F\subset t(M)$ such that 
 \begin{enumerate}
\item
$sn(M)=\cup_{F(M)} \ ({\rm Int}(n(\overline{M} \cap F)) \times T_F)$.
\item
$T_{\mathbb{R}_+\overline{M}}=t(M)$.
\item
$T_{F_1}\subset T_{F_2}$ for $F_1\subset F_2$.
\end{enumerate}
\end{lem}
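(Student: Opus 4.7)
The plan is to first invoke the known stratification of $sn(\ov{M})$ in the positive cancellative torsion-free case (as in \cite[\S 5]{Swan}), and then lift this stratification to $sn(M)$ via the product decomposition $\gp(M) = \gp(\ov{M}) \times t(M)$, with the torsion data on each stratum encoded in subgroups $T_F \subset t(M)$.

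First, I would recall that for the positive cancellative torsion-free monoid $\ov{M}$ one has a disjoint decomposition
\[
sn(\ov{M}) = \bigsqcup_{F \in F(M)} \Int(n(\ov{M} \cap F)),
\]
the strata being pairwise disjoint since they are the relative interiors of the rational cones spanned by the faces of $\R_+\ov{M}$. Given any $\alpha \in sn(M)$, write $\alpha = (a, t)$ using $\gp(M) = \gp(\ov{M}) \times t(M)$; projecting $n\alpha \in M$ (for $n \gg 0$) to $\gp(\ov{M})$ yields $na \in \ov{M}$, so $a \in sn(\ov{M})$. Hence $a$ lies in a unique $\Int(n(\ov{M} \cap F))$. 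This is what assigns a well-defined face to each element of $sn(M)$.

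Next, I would define
\[
T_F := \{ t \in t(M) \mid (a, t) \in sn(M) \text{ for some } a \in \Int(n(\ov{M} \cap F)) \}
\]
and verify that this definition is intrinsic. The core step is to show that if $(a, t) \in sn(M)$ with $a \in \Int(n(\ov{M} \cap F))$ then $(a', t) \in sn(M)$ for every $a' \in \Int(n(\ov{M} \cap F))$: the difference $a - a'$ lies in $\gp(\ov{M} \cap F)$ and a large multiple of it lies in $\ov{M} \cap F$, which combined with semi-normality of $(a,t)$ produces the needed multiples of $(a', t)$ in $M$. Closure of $T_F$ under addition is then immediate from $(a_1, t_1) + (a_2, t_2) = (a_1 + a_2, t_1 + t_2)$, since $a_1 + a_2$ stays in $\Int(n(\ov{M} \cap F))$; closure under inverses follows from finiteness of $t(M)$, as $-t = (N-1)t$ where $N$ is the order of $t$. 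Properties (1)--(3) then fall out in sequence: (1) from combining the stratification of $sn(\ov{M})$ with the well-definedness of $T_F$; (3) from the observation that given $t \in T_{F_1}$ witnessed by $(a_1, t)$, adding a suitable interior element of $\ov{M} \cap F_2$ promotes the witness into the stratum of $F_2$.

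For property (2), given any $t \in t(M)$ I would use that $M$ generates $\gp(M)$ to write $t = t_1 - t_2$ with $(a_i, t_i) \in M$ and $a_1 = a_2$ in $\ov{M}$, and then pick $(b, s) \in M$ with $b$ in the topological interior of $\R_+\ov{M}$ (such $(b,s)$ exists because $M$ is finitely generated and spans $\gp(\ov{M})$). The element $\xi = (b, s+t) = (b, s) + (a_1, t_1) - (a_2, t_2) \in \gp(M)$ has $N\xi = (Nb, Ns) = N(b,s) \in M$ (where $N = |t(M)|$), and a congruence analysis modulo $N$ shows $n\xi \in M$ for all $n \gg 0$, placing $\xi \in sn(M)$ and hence $t \in T_{\R_+\ov{M}}$.

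The main obstacle I anticipate is the independence-of-basepoint step in the definition of $T_F$, where the interplay between the cone-theoretic interior condition and the semi-normality condition must be made precise, together with the congruence argument in (2) producing elements of $M$ in every residue class modulo $|t(M)|$. Both hinge on exploiting that ``eventually in $M$'' rather than ``$2x, 3x \in M$'' is the operative characterization of $sn$ (cf. \cite[\S 1]{Swan}), which gives sufficient flexibility to absorb torsion corrections.
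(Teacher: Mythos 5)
The paper offers no proof of this statement at all: it is imported verbatim from Gubeladze (\cite[Lemma~9.1]{Gub-7}). So your proposal can only be measured against the lemma itself, and there it has a genuine gap, located exactly at the step you single out as the main anticipated obstacle.

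The independence-of-basepoint claim is the problem, in two ways. First, the argument you give for it is false: for $a, a' \in \Int(n(\ov{M}\cap F))$ the difference $a - a'$ is a difference of two interior lattice points, and in general no positive multiple of it lies in $\ov{M}\cap F$ (already for $F = \R_+$, $a = 1$, $a' = 2$). Second, and more fundamentally, the claim itself fails if the identification $\gp(M) = \gp(\ov{M}) \times t(M)$ is an arbitrary splitting, which is all your construction uses. Take $M = \<(0,1,\bar{1}), (1,0,\bar{0}), (1,0,\bar{1})\> \subset \Z^2 \times \Z/2$ with the obvious splitting. This $M$ is finitely generated, positive and cancellative, with $\gp(M) = \Z^2 \times \Z/2$, $t(M) = \Z/2$ and $\ov{M} = \Z_+^2$; one checks that $M = \{(0,y,\bar{y}) : y \ge 0\} \cup \{(x,y,s) : x \ge 1, \ y \ge 0\}$ and that $M$ is already semi-normal, so $sn(M) = M$. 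For the face $F = \R_+(0,1)$ we have $\Int(n(\ov{M}\cap F)) = \{(0,y) : y \ge 1\}$, and the part of $sn(M)$ lying over this stratum is $\{(0,y,\bar{y}) : y \ge 1\}$: it contains $(0,1,\bar{1})$ but not $(0,2,\bar{1})$, and contains $(0,2,\bar{0})$ but not $(0,1,\bar{0})$. So with your definition $T_F = \Z/2$ and assertion (1) fails; in fact no subset $T_F$ works, because the fibre over the stratum is the graph of the nonzero homomorphism $(0,y) \mapsto \bar{y}$ rather than a product. The lemma is true only because the splitting $\gp(M) \cong \gp(\ov{M}) \times t(M)$ can be chosen adapted to $M$ (here, sending $(0,1)$ to $(0,1,\bar{1})$), and this must be done simultaneously for all faces; constructing such a section compatibly with the subgroups of $\gp(M)$ generated by the parts of $M$ lying over each face is the real content of the proof and is entirely absent from your sketch. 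The remaining items --- that $T_F$ is a subgroup once it is well defined, and parts (2) and (3) --- are fine modulo this point.
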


Recall from \cite[\S~5]{Swan} that a submonoid $E$ of any monoid $M$ is called 
extremal if it is non-empty and
$a+b\in E \Rightarrow a, b \in E $ for all $a,b \in M$.
The following is from \cite[Theorem 5.4]{Swan}.

\begin{lem}\label{lem:extreme-crtn}
Let $M$ be a cancellative monoid. Then a submonoid $E$ of $M$
is extremal if and only if there is a monoid homomorphism 
$\phi: M \rightarrow \mathbb{N}$ such that $\phi^{-1}(0)=E.$
\end{lem}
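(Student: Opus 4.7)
The reverse implication is immediate: if $\phi\colon M\to\N$ is a monoid homomorphism and $E=\phi^{-1}(0)$, then $E$ is clearly a submonoid, and extremality follows because $\phi(a+b)=\phi(a)+\phi(b)=0$ in $\N$ forces $\phi(a)=\phi(b)=0$, so $a,b\in E$.

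For the forward implication, I would first replace $M$ by a suitable quotient in which $E$ becomes $\{0\}$, and then produce a positive integral linear functional. Concretely, define a relation on $M$ by $a\sim_E b$ iff there exist $e,e'\in E$ with $a+e=b+e'$. Using that $M$ is cancellative, this is a monoid congruence, and the quotient $N=M/\sim_E$ is again cancellative. The key observation is that under extremality of $E$, the class $[a]_N$ equals $0$ exactly when $a\in E$: if $a+e=e'$ in $M$ with $e,e'\in E$, then $a+e\in E$, so $a\in E$ by extremality. Once this is established, it suffices to produce a monoid homomorphism $\psi\colon N\to\N$ whose only zero is $0_N$; composing with $M\twoheadrightarrow N$ then yields the desired $\phi$.

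Next I would verify that $N$ is positive and torsion-free as a monoid. For positivity, if $[a]+[b]=0$ in $N$, then $a+b\in E$, so $a,b\in E$ by extremality, giving $[a]=[b]=0$. For the torsion-free property, if $n[a]=0$ in $N$, then (after using that the congruence pushes $na$ into $E$) we obtain $na\in E$, and iterating extremality on $a+(n-1)a\in E$ forces $a\in E$, hence $[a]=0$. Consequently $\gp(N)$ is torsion-free finitely generated abelian, so isomorphic to $\Z^r$, and $N$ embeds as a finitely generated submonoid of $\Z^r$ spanning a strongly convex rational polyhedral cone $\R_{+}N\subset\R^r$.

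Finally I would use convex geometry to produce $\psi$: since $\R_{+}N$ contains no line, by Hahn--Banach (or the standard theory of strongly convex polyhedral cones) there is a linear functional on $\R^r$ which is strictly positive on $\R_{+}N\setminus\{0\}$; rationality of the cone lets us choose such a functional with integer values on $\Z^r$, and restricting to $N$ yields $\psi\colon N\to\N$ with $\psi^{-1}(0)=\{0\}$. The main obstacle is the torsion-freeness step: verifying that the congruence $\sim_E$ combined with the extremal property really kills all would-be torsion in $\gp(N)$, so that one genuinely lands in a $\Z^r$ and can apply polyhedral separation. Everything else is bookkeeping with the congruence and a routine rationality argument to make the separating functional integer-valued.
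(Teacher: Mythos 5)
Your reverse implication is correct, and your overall plan for the forward direction (collapse $E$ to $0$ via the congruence $a\sim_E b \Leftrightarrow a+e=b+e'$, then separate the resulting positive cone by an integral linear functional) is sound; note that the paper offers no argument of its own here, it simply cites Swan. The genuine gap is exactly at the point you flagged: the quotient $N=M/\!\sim_E$ need \emph{not} be torsion-free, and your argument does not prove that it is. What you verify is only that $n[a]=0$ forces $[a]=0$, which is a consequence of positivity; torsion-freeness in the sense used in this paper requires $n[a]=n[b]\Rightarrow [a]=[b]$, equivalently that $\gp(N)\cong\gp(M)/\gp(E)$ be torsion-free, and extremality of $E$ does not force $\gp(E)$ to be a saturated subgroup of $\gp(M)$. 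Concretely, take $M=\<\{(2,0),(0,1),(1,1)\}\>\subset\Z^2$ and $E=M\cap(\Z\times\{0\})=\<\{(2,0)\}\>$. Then $E$ is extremal (any sum landing in $E$ has both summands with vanishing second coordinate, hence in $E$), but $\gp(M)=\Z^2$ and $\gp(E)=2\Z\times\{0\}$, so $\gp(N)\cong\Z\oplus\Z/2$; indeed $[(0,1)]\neq[(1,1)]$ in $N$ by parity, while $2[(0,1)]=2[(1,1)]$ because $(0,2)+(2,0)=(2,2)$. So you cannot conclude that $N$ embeds in some $\Z^r$ and apply polyhedral separation directly.

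The repair is short but must be made explicit: replace $N$ by its image $\ov{N}$ in the quotient of $\gp(N)$ by its torsion subgroup. Extremality still gives positivity of $\ov{N}$: if $\ov{[a]}+\ov{[b]}=0$, then $n(a+b)\in E$ for some $n\ge 1$, and writing $n(a+b)=a+\bigl(b+(n-1)(a+b)\bigr)$ and applying extremality yields $a,b\in E$, i.e.\ $[a]=[b]=0$. The same computation (with $b=0$) shows that the composite $M\to N\to\ov{N}$ still has fibre over $0$ equal exactly to $E$. Now $\ov{N}$ is finitely generated, cancellative, torsion-free and positive, hence spans a pointed rational polyhedral cone in $\gp(\ov{N})\otimes_{\Z}\R$, and your Hahn--Banach plus rationality argument produces $\psi\colon\ov{N}\to\N$ with $\psi^{-1}(0)=\{0\}$; composing gives the desired $\phi$. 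With this correction your proof goes through and is, in substance, the argument behind the result of Swan that the paper cites without proof.
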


We note down some properties of the monoid $\overline{M}$ in the following 
lemma for later use.
\begin{lem}\label{lem:M-bar-prop}
Let $M$ be a positive cancellative semi-normal monoid. Then the following hold.
\begin{enumerate}
\item
$\overline{M}$ is a positive cancellative torsion-free semi-normal monoid.
\item
Each $\overline{M}\cap F_i$ is an extremal submonoid of $\ov{M}$.
\item
Each $\overline{M}\cap F_i$ is a cancellative torsion-free semi-normal monoid.
\item	
If $N$ is a submonoid of $\overline{M}$ such that 
${\rm Int}(N) \cap (\ov{M} \cap F_i)$ is non-empty,
then $N\subset \ov{M}\cap F_i$.
\end{enumerate}
\end{lem}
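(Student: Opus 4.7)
The plan is to prove the four statements essentially in the order given, since (2), (3), and (4) all depend on part (1), and within (1) the real work is the semi-normality claim.

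For part (1), the first two properties come cheaply. The monoid $\overline{M}$ embeds in $\gp(M)/t(M)$, which is a free abelian group, so $\overline{M}$ is automatically cancellative and torsion-free. Positivity is a short lift-and-cancel argument: if $\bar{a}+\bar{b}=0$ in $\overline{M}$ with $a,b\in M$ lifting $\bar{a},\bar{b}$, then $a+b\in t(M)$, hence $n(a+b)=0$ for some $n>0$, and positivity of $M$ forces $na=0$, so $a\in t(M)$ and $\bar{a}=0$. The main content of (1) is semi-normality of $\overline{M}$, and my plan is to obtain it by applying \lemref{lem:Gub-Semi} twice. Applied to $M$ (which equals $sn(M)$ by hypothesis), the lemma gives
\[
M \;=\; \bigcup_{F\in F(M)}\bigl({\rm Int}(n(\overline{M}\cap F))\times T_F\bigr),
\]
and projecting to $\gp(\overline{M})$ identifies $\overline{M}=\bigcup_F {\rm Int}(n(\overline{M}\cap F))$. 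Applied to $\overline{M}$ itself—already known to be positive cancellative with $t(\overline{M})=0$, so all $T_F$ are trivial—the same lemma gives $sn(\overline{M})=\bigcup_F {\rm Int}(n(\overline{M}\cap F))$. Comparing the two descriptions yields $sn(\overline{M})=\overline{M}$.

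Parts (2), (3), (4) are then essentially formal consequences of (1) together with the fact that each $F_i$ is a face of the rational polyhedral cone $\R_+\overline{M}$, i.e.\ satisfies $a+b\in F_i$ with $a,b\in\R_+\overline{M}$ implies $a,b\in F_i$. For (2), if $a,b\in\overline{M}$ and $a+b\in\overline{M}\cap F_i$, the face property gives $a,b\in F_i$, proving extremality directly (one could equivalently invoke \lemref{lem:extreme-crtn} by choosing a $\Z$-valued supporting functional of $F_i$). For (3), $\overline{M}\cap F_i$ inherits cancellativity and torsion-freeness from $\overline{M}$, and given $x\in\gp(\overline{M}\cap F_i)$ with $2x,3x\in\overline{M}\cap F_i$, semi-normality of $\overline{M}$ from (1) places $x$ in $\overline{M}$, while $2x\in F_i$ forces $x\in F_i$ since $F_i$ is a cone. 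For (4), pick $a\in{\rm Int}(N)\cap F_i$ and any $b\in N$; by definition of ${\rm Int}(N)$ there exist $n>0$ and $c\in N$ with $na=b+c$. Since $na\in F_i$ and $b,c\in\overline{M}\subset\R_+\overline{M}$, the face property gives $b\in F_i$, so $b\in\overline{M}\cap F_i$.

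The only nontrivial step I anticipate is the semi-normality clause of (1); everything else is either a routine embedding argument or a direct appeal to the face structure of $\R_+\overline{M}$. The subtlety in the semi-normality argument is that the projection $M\to\overline{M}$ collapses the torsion component $T_F$ in the Gubeladze decomposition, so one has to verify that the Gubeladze decomposition of $\overline{M}$ (where all $T_F$ vanish) really matches the image of the decomposition of $M$ over the same indexing family $F(M)=F(\overline{M})$. That equality of indexing sets is the key compatibility that makes the two applications of \lemref{lem:Gub-Semi} fit together.
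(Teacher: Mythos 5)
Your proposal is correct and follows essentially the same route as the paper: the semi-normality of $\overline{M}$ in (1) is obtained from the two applications of \lemref{lem:Gub-Semi} encoded in the paper's diagram ~\eqref{eqn:seminormal} (the paper phrases it as lifting $a$ through the surjection $\psi$ and projecting back, which amounts to your comparison of the two unions over the common face set $F(M)=F(\overline{M})$), and parts (2)--(4) use the same face/extremality mechanism. The only cosmetic differences are that you prove positivity of $\overline{M}$ and the extremality in (2) directly from the face property of $\R_+\overline{M}$, where the paper cites prior observations and \cite{BG}/\cite{Swan}.
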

\begin{proof}
We have already observed that $\overline{M}$ is a positive cancellative 
torsion-free monoid. To see semi-normality,
let $a\in gp(\overline{M})$ such that $2a, 3a \in \overline{M}$. 
Let $sn(M)$ and $sn(\ov{M})$ denote the semi-normalizations of $M$ and 
$\ov{M}$, respectively.
Using Lemma \ref{lem:Gub-Semi}, we get a commutative 
diagram
	
	\begin{equation}\label{eqn:seminormal}
	\xymatrix@C1pc{
		M \ar@{->}[r]
		\ar@{->}[d]_{\phi}
		&sn(M)= \cup_{F(M)} \ ({\rm Int}(n(\overline{M} \cap F)) 
\times T_F)
		\ar@{->}[d]^{\psi}
		\\
		\overline{M} \ar@{->}[r]
		& sn(\overline{M})=   \cup_{F(M)} \ 
({\rm Int}(n(\overline{M} \cap F)),
	}
	\end{equation}
where $\psi$ is induced from the projection on each face. Clearly, $\psi$ is a 
surjective map.

Since $\ov{M}$ is cancellative, the map $\ov{M} \rightarrow gp(\ov{M})$ is 
injective. Hence the map $\ov{M}\rightarrow sn(\ov{M})$ is injective. 
Since $a, 2a, 3a \in sn(\ov{M})$ (see \S~\ref{sec:Monoids}), 
we can lift them to $sn(M)$ using ~\eqref{eqn:seminormal}) and use the fact 
that $M=sn(M)$ to conclude that $a \in \ov{M}$. Hence, we have 
$\overline{M}=sn(\overline{M})$.

The part (2) follows from \cite[Remark~2.6(c), Exercise~2.3(a)]{BG}
(see also \cite[\S~5, Remark]{Swan}).
To prove (3), we only need to show that $\ov{M} \cap F_i$ is semi-normal.
For this, let $a\in gp(\overline{M}\cap F_i)$ such that $2a, 3a \in 
\overline{M}\cap F_i$. Since $\overline{M}$ is semi-normal, we must have
$a\in \overline{M}$. Since $F_i$ is an extremal
submonoid of $\ov{M}$ by (2) and $a+a\in  F_i$, we must have $a \in  F_i$. 
This proves (3).

For (4), let $z\in N$ and $x\in \Int(N) \cap (\ol M \cap F_i)$.
By definition of $\Int(N)$, there exists an integer $n>0$
such that $nx=z + y$ for some $y\in N$. 
Since $F_i$ is extremal and $z + y = nx \in F_i$, it follows that
$z,y \in F_i$. In particular, $z \in N \cap F_i \subset \ov{M} \cap F_i$.
\end{proof}

\begin{lem}\label{lem:monoid-ideal}
Let $M$ be a positive cancellative semi-normal monoid.
With the above notations, let 
$I_k:=\cup_{j\geq k} \ ({\rm Int}(n(\overline{M} \cap F_j)))_* \leftthreetimes 
T_{F_j}$ be subsets of $M$ for $0\leq k\leq r$.
Then $I_k$ is an ideal of $M$ for each $k$. 
\end{lem}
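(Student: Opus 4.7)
The goal is to verify the ideal axiom $I_k + M \subseteq I_k$. Fix $x \in I_k$ and $m \in M$; the strategy is to identify a face $F_{j'}$ of the cone $\R_+\ov{M}$, with index $j' \geq k$, such that $x+m$ lies in the $F_{j'}$-summand of $I_{j'} \subseteq I_k$. All the geometric content is packaged in Lemma~\ref{lem:Gub-Semi} (Gubeladze's description of $sn(M)$) together with the face lattice of the cone.

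By definition of $I_k$, the element $x$ lies in $(\Int(n(\ov{M} \cap F_j)))_* \leftthreetimes T_{F_j}$ for some $j \geq k$. Under the splitting $\gp(M) \cong \gp(\ov{M}) \times t(M)$ write $x = (a,t)$ with $a \in \Int(n(\ov{M} \cap F_j))$ and $t \in T_{F_j}$ (the identity element is the only exception, and is handled separately since $I_k$ trivially absorbs $0$). Since $M$ is positive cancellative semi-normal, one has $M = sn(M)$, so Lemma~\ref{lem:Gub-Semi}(1) yields the analogous decomposition $m = (b,s)$ with $b \in \Int(n(\ov{M} \cap F_i))$ and $s \in T_{F_i}$ for some face $F_i$.

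The main step is to take $F_{j'} := F_j \vee F_i$, the smallest face of $\R_+\ov{M}$ containing both $F_j$ and $F_i$. First, since $F_{j'} \supseteq F_j$, the indexing convention on $F(M)$ forces $j' \geq j \geq k$ (if $F_j \subsetneq F_{j'}$ the dimension strictly grows, hence $j' > j$; if $F_j = F_{j'}$ then $F_i \subseteq F_j$ and $j' = j$). Second, a standard convex-geometric fact asserts that if two points lie in the relative interiors of two faces of a cone, their sum lies in the relative interior of the smallest face containing both; applied here this gives $a + b \in \Int(n(\ov{M} \cap F_{j'}))$. Third, the monotonicity clause Lemma~\ref{lem:Gub-Semi}(3) yields $T_{F_j}, T_{F_i} \subseteq T_{F_{j'}}$, so $t+s \in T_{F_{j'}}$. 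Combining,
\[
x + m = (a+b,\, t+s) \in \Int(n(\ov{M} \cap F_{j'})) \times T_{F_{j'}} \subseteq (\Int(n(\ov{M} \cap F_{j'})))_* \leftthreetimes T_{F_{j'}} \subseteq I_k,
\]
which is the required closure property.

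The main obstacle I foresee is the convex-geometric assertion that the sum of two points in relative interiors of faces lies in the relative interior of the join; I would prove it either by invoking the standard face-sum identity for rational polyhedral cones, or monoid-theoretically by verifying that $\Int(N)$ is always an ideal of $N$ and then combining this with an inclusion $n(\ov{M} \cap F_j), n(\ov{M} \cap F_i) \hookrightarrow n(\ov{M} \cap F_{j'})$. A minor bookkeeping obstacle is the degenerate case $a = 0$ (only possible when $x$ is the identity, by the definition of $\leftthreetimes$), which must be set aside as a trivial case of the ideal axiom.
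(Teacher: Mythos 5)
Your argument is correct in substance, but it takes a genuinely different route from the paper's. The paper first replaces $\Int(n(\ov{M}\cap F_j))$ by $\Int(\ov{M}\cap F_j)$ via Swan's Lemma~6.6, then uses the disjoint decomposition $M=\amalg_j\,\Int(\ov{M}\cap F_j)\times T_{F_j}$ (Lemma~\ref{lem:Gub-Semi} together with Swan's Lemma~5.3) to conclude that $x+m$ lands in the piece indexed by \emph{some} face $F_l$, and only then proves $l\ge j$, indirectly: extremality of $\ov{M}\cap F_l$ forces the $\ov{M}$-component of $x$ into $\ov{M}\cap F_l$, Lemma~\ref{lem:M-bar-prop}(4) upgrades this to $\ov{M}\cap F_j\subseteq\ov{M}\cap F_l$, and a supporting-hyperplane argument upgrades that to $F_j\subseteq F_l$. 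You instead name the receiving face explicitly as the join $F_{j'}=F_j\vee F_i$ and check membership coordinate by coordinate. Your face-sum fact is correct (it is the extremality argument in disguise: a face contains $a+b$ if and only if it contains $a$ and $b$), but to pass from the topological statement $a+b\in{\rm relint}(F_{j'})$ to the monoid-theoretic one $a+b\in\Int(n(\ov{M}\cap F_{j'}))$ you need the identity $\Int(\ov{M}\cap F)=\ov{M}\cap{\rm relint}(F)$, which is exactly the Swan Lemma~5.3 input underlying the paper's disjoint union; cite it explicitly. What your route buys is an explicit identification of the target face and the avoidance of the paper's two-step upgrade from an inclusion of monoids to an inclusion of faces; the price is that you must verify the torsion coordinate separately via Lemma~\ref{lem:Gub-Semi}(3), which the paper gets for free from disjointness.

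Two caveats. First, your proposed monoid-theoretic fallback for the face-sum fact does not work: $\Int(N)$ is indeed an ideal of $N$, but an interior point of a proper face is never an interior point of the larger face, so the ideal property of $\Int(n(\ov{M}\cap F_{j'}))$ cannot be applied to $a$; only the cone-geometric route is viable. Second, your disposal of the element $(0,0)$ is not right as stated: if $0\in I_k$ then $I_k$ being an ideal would force $I_k=M$, so ``$I_k$ absorbs $0$'' does not render the case trivial. The correct resolution (which the paper also handles only implicitly, by asserting $(\Int(\ov{M}\cap F_j))_*\leftthreetimes T_{F_j}=\Int(\ov{M}\cap F_j)\times T_{F_j}$) is that the intended pieces for $j\ge1$ are $\Int(\ov{M}\cap F_j)\times T_{F_j}$, which do not contain $0$; read the definition that way rather than treating $x=0$ as an instance of the ideal axiom.
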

\begin{proof}
Since $M$ is semi-normal, Lemma~\ref{lem:M-bar-prop}(1) implies that
$\overline{M}$ is also semi-normal.
It follows from Lemma~\ref{lem:M-bar-prop}(3) that each 
$\overline{M} \cap F_j$ is also a cancellative torsion-free semi-normal
monoid. Therefore, we deduce from \cite[Lemma 6.6]{Swan} that
${\rm Int}(n(\overline{M} \cap F_j))={\rm Int}(\overline{M} \cap F_j)$.

We need to show that $I_k+M\subset I_k$ for each $k$. 
For this, we note that $({\rm Int}(\overline{M} \cap F_j))_* 
\leftthreetimes T_{F_j} =  {\rm Int}(\overline{M} \cap F_j) \times T_{F_j}$ for 
each $j$. Furthermore, $\R_+\ov{M}\cap \overline{M} = 
\amalg_j  {\rm Int}(\overline{M} \cap F_j)$ is a disjoint union
(see \cite[Lemma~5.3]{Swan}).
Hence, Lemma~\ref{lem:Gub-Semi} implies that 
$M=\amalg_j \ {\rm Int}(\overline{M} \cap F_j)\times T_F$ is a disjoint union, 
where we identify $gp(M)=gp(\overline{M})\times t(M)$ and
look at $M$ as a submonoid of $gp(M)$.

We now let $a\in I_k$ and $b\in M$.
Then $a=(a_1,a_2)\in {\rm Int}(\overline{M} \cap F_j) \times T_{F_j}$ for some 
$j \ge k$ and $b=(b_1,b_2)\in {\rm Int}(\overline{M} \cap F_i) \times T_{F_i}$ 
for some $0 \le i \le r$. 
Then $a + b=(a_1+b_1, a_2+b_2)\in M$ and hence it must belong to 
${\rm Int}(\overline{M} \cap F_{l}) \times T_{F_{l}}$ for some $l$.
We have to show that $l \ge k$ to finish the proof.

To show this, we note that
$\overline{M} \cap F_{l}$ is an extremal submonoid of $\overline{M}$.
It follows from this that $a_1, b_1\in \overline{M} \cap F_{l}$. 
In particular, we get $a_1 \in {\rm Int}(\overline{M} \cap F_j) \
\cap \ {\rm Int}(\overline{M} \cap F_{l})$.
Lemma~\ref{lem:M-bar-prop}($4$) therefore implies that
$\overline{M} \cap F_{j}\subseteq \overline{M} \cap F_{l}$.
Since each face of $\R_+\ov{M}$ is its intersection with
finitely many hyperplanes, each of which must either be non-negative or 
non-positive on $\R_+\ov{M}$, it follows that $F_j \subseteq F_l$.
This in turn implies that $l \ge j$. As $j \ge k$, we get $l \ge k$,
as desired.
\end{proof}

\subsection{Milnor square associated to positive torsion monoids}
\label{sec:Milnor-pos}
Let $R$ be a ring and $M$ a positive cancellative seminormal monoid
as in \S~\ref{sec:SNM}.
We let $A_k=R[M]/I_kR[M] = R[{M}/{I_k}]$ for $0 \le k \le r$.
There is a sequence of surjective $R$-algebra 
homomorphisms $A_{r+1}:= R[M] \surj A_r \surj \cdots \surj A_k \surj A_{k-1}
\surj \cdots \surj A_1 \xrightarrow{\simeq} A_0 = R$. 
Let $\phi_k: A_k \to A_{k-1}$ be the quotient map for $1 \le k \le r+1$.

\begin{lem}\label{lem:int-square}
The following hold for $1 \le k \le r+1$.
\begin{enumerate}
\item
${\rm Ker}(\phi_k) = R(\Int(\overline{M} \cap F_{k-1}) \times T_{F_{k-1}})$.
\item
There is a Milnor square 
\begin{equation}\label{eqn:int-sq}
\xymatrix@C1pc{
R[(\Int(\overline M \cap F_{k-1}) \times T_{F_{k-1}})_*] 
\ar@{->}[r] \ar@{->}[d]& A_k \ar@{->}[d]^{\phi_k} \\
R \ar@{->}[r]^-{\delta_{k-1}} & A_{k-1}.}
\end{equation}
\end{enumerate}
\end{lem}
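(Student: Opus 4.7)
The plan is to reduce the lemma to a clean computation of $R$-module bases, using the disjoint union decomposition of $M$ that was exploited in the proof of Lemma~\ref{lem:monoid-ideal}. The key combinatorial input will be the identification of $I_{k-1} \setminus I_k$ as a single stratum $\Int(\overline{M} \cap F_{k-1}) \times T_{F_{k-1}}$, and the main ring-theoretic input is that monoid algebras and their quotients by monoid ideals have canonical $R$-free bases.

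First, I would recall from the proof of Lemma~\ref{lem:monoid-ideal} that $M = \coprod_{j=0}^{r} \Int(\overline{M} \cap F_j) \times T_{F_j}$ as a disjoint union, viewed inside $\gp(\overline{M}) \times t(M)$. Since $M$ is semi-normal, that same argument gives $(\Int(n(\overline{M} \cap F_j)))_* \leftthreetimes T_{F_j} = \Int(\overline{M} \cap F_j) \times T_{F_j}$, so $I_k = \coprod_{j \ge k} \Int(\overline{M} \cap F_j) \times T_{F_j}$ and hence
$$I_{k-1} \setminus I_k \;=\; \Int(\overline{M} \cap F_{k-1}) \times T_{F_{k-1}}.$$
Because $R[M]$ is $R$-free on $M$ and $R[I_k]$ is $R$-free on $I_k$, the quotient $A_k = R[M]/R[I_k]$ is $R$-free on $M \setminus I_k$, and the ideal $\mathrm{Ker}(\phi_k) = R[I_{k-1}]/R[I_k]$ is $R$-free on $I_{k-1} \setminus I_k = \Int(\overline{M} \cap F_{k-1}) \times T_{F_{k-1}}$. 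This is exactly part~(1).

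For part~(2), I would set $S := \Int(\overline{M} \cap F_{k-1}) \times T_{F_{k-1}}$ and observe that $S$ is closed under addition in $M$: the sum of two interior points of the cone $\R_+(\overline{M} \cap F_{k-1})$ remains interior, and $T_{F_{k-1}}$ is a subgroup of $t(M)$. Hence $S_* = S \cup \{0\}$ is a submonoid of $M$, the composition $R[S_*] \hookrightarrow R[M] \twoheadrightarrow A_k$ defines the top horizontal ring map, and the augmentation sending every element of $S$ to $0$ gives the left vertical (a well-defined ring map precisely because $S$ is closed under addition). The square commutes because every element of $S$ lies in $I_{k-1}$ and hence maps to zero in $A_{k-1}$.

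Finally, I would verify the pullback property. The map $\phi_k$ is visibly surjective as a quotient map, so the Milnor square condition reduces to checking the fibered product. Given $(a, r) \in A_k \times R$ with $\phi_k(a) = \delta_{k-1}(r)$, expand $a = \sum_{m \in M \setminus I_k} r_m [m]$ in the $R$-basis; then $\phi_k(a) = \sum_{m \in M \setminus I_{k-1}} r_m [m]$ must equal $r \cdot 1$, forcing $r_0 = r$ and $r_m = 0$ for every nonzero $m \in M \setminus I_{k-1}$, by $R$-freeness of $A_{k-1}$. Thus $a = r + \sum_{m \in S} r_m [m]$ is the unique preimage in $R[S_*]$, proving the square is Cartesian. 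The only delicate point in the whole argument is the bookkeeping behind part~(1)—making sure the strata in $M$ are correctly assigned to $I_{k-1}$ versus $I_k$—but Lemma~\ref{lem:monoid-ideal} has already done this work.
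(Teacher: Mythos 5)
Your proof is correct and follows essentially the same route as the paper: both parts rest on the stratification $M = \coprod_j \Int(\overline{M}\cap F_j)\times T_{F_j}$ established in the proof of Lemma~\ref{lem:monoid-ideal}, together with the freeness of $R[M/I]$ as an $R$-module on $M\setminus I$, which is exactly how the paper identifies $\mathrm{Ker}(\phi_k)$ with $R[I_{k-1}/I_k]$ and then deduces the Cartesian property. Your explicit basis-level verification of the fiber product merely spells out what the paper compresses into ``we conclude immediately from (1)''.
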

\begin{proof}
Let ${I_{k-1}}/{I_{k}}$ be the image of $I_{k-1}$ in ${M}/{I_{k}}$
	under the quotient map $M \to M/{I_{k}}$. Then it is clear from
	\lemref{lem:monoid-ideal} that
	${I_{k-1}}/{I_{k}}$ is an ideal in ${M}/{I_{k}}$ and
	the monoid ${M}/{I_{k-1}}$ is obtained from ${M}/{I_{k}}$ by
	collapsing ${I_k}/{I_{k-1}}$.
	It follows that
\begin{equation}\label{eqn:Mon-exact}
	0 \to R[{I_{k-1}}/{I_{k}}] \to R[{M}/{I_{k}}] \xrightarrow{\phi_k} 
R[{M}/{I_{k-1}}] 
	\to 0
\end{equation}
	is an exact sequence of $R$-modules (see \S~\ref{sec:Monoids}).
The first assertion now follows as $R[{I_{k-1}}/{I_{k}}] =
	R[\Int(\overline{M} \cap F_{k-1}) \times T_{F_{k-1}}]$.
	
To prove (2), we first note that $\delta_k$ is the canonical inclusion
	$R \inj A_{k}$. Moreover, there is a commutative diagram
	\begin{equation}\label{eqn:int-sq-0}
	\xymatrix@C1pc{
		R \ar[dr] \ar@/_2pc/[ddr]_{\rm {Id}} 
\ar@/^2pc/[drr]^-{\delta_k} & & \\
		& R[(\Int(\overline M \cap F_{k-1}) \times T_{F_{k-1}})_*]
		\ar@{->}[r] \ar@{->}[d] & A_k \ar@{->}[d]^{\phi_k} \\
		& R \ar@{->}[r]^-{\delta_{k-1}} & A_{k-1}.}
	\end{equation}
Using this diagram, we conclude immediately from
	(1) that ~\eqref{eqn:int-sq} is Cartesian. Since $\phi_k$ is surjective,
	it follows that this is also a Milnor square. 
\end{proof}

\subsection{Reduction to positive semi-normal monoids}
\label{sec:SK-red-0}
In this subsection, we shall prove some lemmas to reduce the proof of
\thmref{thm:MT-4} to the case of positive semi-normal monoids.
We begin by recalling the definition of $SK_1$ and $SK_0$ of rings.

Let $R$ be a ring and let $H_0(R)$ denote the set of all continuous functions 
from $\Spec(R) \rightarrow \Z$ with respect to the Zariski topology on 
$\Spec(R)$ and the discrete topology on $\Z$. 
It is easy to verify that this is a ring.
There is a group homomorphism ${\rm rk}: K_0(R) \rightarrow H_0(R)$ such that
${\rm rk}([P])(\fp)$ is the rank of $P_{\fp}$ if $P$ is a projective
$R$-module.
We define $\wt{K}_0(R):= {\rm Ker}({\rm rk})$. 
There is a map ${\rm det}: K_0(R) \rightarrow \Pic(R)$ which sends $[P]$
to $[\wedge^r(P)]$, where $r$ is the rank of $P$. 
Its restriction yields a group homomorphism
$\wt{{\rm det}}: \wt{K}_0(R) \rightarrow \Pic(R)$. 
We define $SK_0(R):= {\rm Ker}(\wt{\rm det})$.
We let $SK_1(R) = {SL(R)}/{E(R)}$ so that there is a canonical 
decomposition $K_1(R) = SK_1(R) \oplus U(R)$.
The following lemma is elementary.

\begin{lem}\label{lem:SK_0-trivial}
$SK_0(R) = 0$ if and only if for every projective $R$-module $P$, one has
$P \oplus R^s \cong \wedge^r(P) \oplus R^t$, where $r = {\rm rk}(P)$.
\end{lem}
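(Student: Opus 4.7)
The plan is to use the definition of $SK_0(R)$ as the kernel of the determinant map $\wt{\det}: \wt{K}_0(R) \to \Pic(R)$, combined with the standard fact that two finitely generated projective $R$-modules $P$ and $Q$ represent the same class in $K_0(R)$ if and only if they are stably isomorphic, i.e.\ $P \oplus R^s \cong Q \oplus R^s$ for some $s \ge 0$. I would first reduce to the case where $P$ has constant rank on $\Spec(R)$ by decomposing $R$ along the finite clopen partition of $\Spec(R)$ given by the locally constant rank function of $P$; in the statement one absorbs the resulting componentwise stabilization indices into the single integers $s$ and $t$.

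For the forward implication, assume $SK_0(R) = 0$ and let $P$ be a finitely generated projective of constant rank $r$, so $\wedge^r(P)$ is the rank-one projective $\det(P)$. I would consider the class
\[
\alpha := [P] - [\wedge^r(P)] - (r-1)[R] \in K_0(R).
\]
Its rank is $r - 1 - (r-1) = 0$, hence $\alpha \in \wt{K}_0(R)$, and its image under $\wt{\det}$ is $[\det P] \otimes [\det P]^{-1} = [R]$, so $\alpha \in SK_0(R) = 0$. Thus $[P] = [\wedge^r(P) \oplus R^{r-1}]$ in $K_0(R)$, and stable isomorphism yields $P \oplus R^s \cong \wedge^r(P) \oplus R^{r-1+s}$ for some $s \ge 0$, which is the desired identity with $t = r - 1 + s$.

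For the converse, assume the stable-isomorphism hypothesis and take $\alpha \in SK_0(R)$. Writing $\alpha = [P] - [R^n]$ (every class admits such a presentation by absorbing a complement of the negative part into the positive one), the conditions $\alpha \in \wt{K}_0(R)$ and $\wt{\det}(\alpha) = 0$ translate to $\rk(P) = n$ and $\wedge^n(P) \cong R$. Applying the hypothesis to $P$ gives $P \oplus R^s \cong \wedge^n(P) \oplus R^t \cong R^{t+1}$, so $P$ is stably free; comparing ranks forces $n + s = t + 1$, so $P \oplus R^s \cong R^{n+s}$, and hence $[P] = [R^n]$ in $K_0(R)$, i.e.\ $\alpha = 0$.

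No serious obstacle is expected: the argument is a routine manipulation of the determinant map together with stable isomorphism. The only mildly delicate point is the reduction from non-constant to constant rank, which is handled by the finite clopen decomposition of $\Spec(R)$ induced by $\rk(P)$ and the fact that f.g.\ projectives split accordingly.
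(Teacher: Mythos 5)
Your proof is correct and takes essentially the same route as the paper: the paper's own proof simply asserts that $[P] = [\wedge^r P \oplus R^{r-1}]$ in $K_0(R)$ is ``easy to see'' and that the converse is ``obvious'', and your determinant computation of the class $\alpha$ together with the stably-free argument for the converse are precisely the elided details. The clopen-decomposition remark for non-constant rank is a reasonable extra care the paper does not bother with.
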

\begin{proof}
Suppose first that $SK_0(R) = 0$.
Let $P$ be a projective $R$-module of rank $r \ge 1$.
Then it is easy to see that $[P] = [\wedge^rP \oplus R^{r-1}]$ 
in $K_0(R)$. But this implies that $P\oplus R^s \cong \wedge^rP\oplus R^t$,
as is well known. The converse is obvious.
\end{proof}

The following result which connects $SK_1$ with $SK_0$, is due to 
Bass \cite[Corollary 5.12]{Bass}.
\begin{prop}\label{Bass-Milnor}
	If (\ref{eqn:MilnorSq}) is a Milnor square, then we have the 
$6$-term exact sequence
	{\small
		$$SK_1(A_1) \rightarrow SK_1(A_2)\oplus SK_1(B_1) 
\rightarrow SK_1(B_2) \rightarrow 
		SK_0(A_1) \rightarrow SK_0(A_2)\oplus SK_0(B_1) 
\rightarrow SK_0(B_2).$$} 
\end{prop}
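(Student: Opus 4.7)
The plan is to exhibit the desired six-term sequence as a direct summand of the full Mayer--Vietoris exact sequence
\[
K_1(A_1) \to K_1(A_2) \oplus K_1(B_1) \to K_1(B_2) \xrightarrow{\partial} K_0(A_1) \to K_0(A_2) \oplus K_0(B_1) \to K_0(B_2)
\]
provided by \propref{prop:Milnor*}, which is exact at every inner position under the Milnor square hypothesis. I would decompose every term into its natural summands and show that all maps respect the decompositions, so that each summand sub-sequence is itself exact.

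First, for any commutative ring $R$, I would invoke the natural splittings $K_1(R) = SK_1(R) \oplus R^\times$ and $K_0(R) = SK_0(R) \oplus \Pic(R) \oplus H_0(R)$. The latter comes from the right inverses $n \mapsto n[R]$ to the rank map $K_0(R) \to H_0(R)$ and $[L] \mapsto [L] - [R]$ to the determinant $\wt K_0(R) \to \Pic(R)$. Since any ring homomorphism preserves rank, sends $\SL_n$ into $\SL_n$, units to units, and line bundles to line bundles (via $L \mapsto S \otimes_R L$), all four horizontal arrows in the Mayer--Vietoris respect both decompositions.

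The crucial step will be the analysis of the connecting map $\partial$. Using the classical Milnor patching construction, for $\sigma \in \GL_n(B_2)$ one has $\partial[\sigma] = [P(\sigma)] - n[A_1]$, where
\[
P(\sigma) = \{(a,b) \in A_2^n \oplus B_1^n \mid \psi(a) = \sigma \cdot \beta(b)\}
\]
is a rank-$n$ projective $A_1$-module. Since $P(\sigma)$ has constant rank $n$, the image of $\partial$ lies in $\wt K_0(A_1) = SK_0(A_1) \oplus \Pic(A_1)$, so $\partial$ has no $H_0$ component. Moreover, applying $\wedge^n$ to the patching diagram yields $\det P(\sigma) = L(\det \sigma)$, the rank-one $A_1$-module patched by the unit $\det \sigma \in B_2^\times$. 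Consequently $\partial$ sends $SK_1(B_2)$ into $SK_0(A_1)$ (since $\sigma \in \SL_n$ gives $\det \sigma = 1$) and $B_2^\times$ into $\Pic(A_1)$, so $\partial$ too respects the decompositions.

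Once every map in the Mayer--Vietoris respects the decompositions, the six-term sequence splits as a direct sum of three exact sequences (kernels and images both decompose termwise in a direct sum): the desired $SK_1$--$SK_0$ sequence, the analogous $R^\times$--$\Pic$ sequence, and the 3-term sequence $H_0(A_1) \to H_0(A_2) \oplus H_0(B_1) \to H_0(B_2)$. The main technical point will be the identification $\det P(\sigma) = L(\det \sigma)$, which amounts to the functoriality of $\wedge^n$ under Milnor patching of projective modules; apart from this, everything reduces to routine naturality checks.
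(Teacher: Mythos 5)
Your reduction of the statement to the Mayer--Vietoris sequence of \propref{prop:Milnor*}, and in particular your analysis of the boundary map via Milnor patching together with the identification $\det P(\sigma)=L(\det\sigma)$, is exactly the right compatibility to establish (and is how Bass proves this; the paper itself offers no proof beyond the citation). However, there is a genuine gap in the step where you split the sequence into three direct summands: the decomposition $K_0(R)=SK_0(R)\oplus\Pic(R)\oplus H_0(R)$ is \emph{not} valid as a natural decomposition of abelian groups. The proposed section $[L]\mapsto[L]-[R]$ of $\wt{\det}$ is a section of sets but not a group homomorphism in general: additivity would require $[L]+[M]=[L\otimes M]+[R]$ in $K_0(R)$, i.e.\ that $L\oplus M$ and $(L\otimes M)\oplus R$ be stably isomorphic. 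The obstruction is $c_2\bigl([L]+[M]-[L\otimes M]-[R]\bigr)=c_1(L)c_1(M)$, which need not vanish even for smooth affine varieties: for $X=(\P^2\times\P^2)\setminus D$ with $D$ a smooth very ample $(1,1)$-divisor, one has $\Pic(X)\cong\Z$ generated by $L=\sO(1,0)|_X$ and $\CH^2(X)\cong\Z$ with $c_1(L)^2\neq 0$, so $2[L]-[L^{\otimes 2}]-[\sO_X]\neq 0$ in $K_0(X)$ (its Chern character is $-c_1(L)^2+\cdots$). Hence $K_0$ does not decompose termwise, and the argument ``kernels and images decompose in a direct sum'' does not apply to the $K_0$-part of the sequence. (The $K_1$-part is fine: $GL_1(R)\to K_1(R)$ really is a natural group-theoretic splitting of $\det$.)

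The statement is of course still true, and your proof can be repaired with a standard modification that uses only what you have already verified. Instead of a direct sum decomposition, use the natural short exact sequences $1\to SK_1(R)\to K_1(R)\to R^{\times}\to 1$ and $0\to SK_0(R)\to K_0(R)\xrightarrow{({\rm rk},\,\det)} H_0(R)\oplus\Pic(R)\to 0$ (surjectivity and the identification of the kernel need no splitting). Your computation that $\partial$ kills the $H_0$-direction and that $\det\circ\partial$ equals the classical units--Pic boundary $u\mapsto[L(u)]$ shows that these sequences assemble into a short exact sequence of six-term complexes, with subcomplex the desired $SK$-sequence and quotient the direct sum of the $H_0$-sequence and the units--Pic Mayer--Vietoris sequence. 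Since the total complex is exact at the inner spots (\propref{prop:Milnor*}) and the quotient complex is exact at the relevant spots (this is the classical units--Pic exact sequence of a Milnor square, which must now be quoted as an input rather than obtained as a byproduct, plus the injectivity of $U(A_1)\to U(A_2)\oplus U(B_1)$ and $H_0(A_1)\to H_0(A_2)\oplus H_0(B_1)$ coming from $A_1=A_2\times_{B_2}B_1$), the long exact homology sequence of the short exact sequence of complexes yields exactness of the $SK$-subcomplex at its four inner terms, which is the assertion.
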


\begin{lem}\label{Artin}
	Let $R$ be an Artinian ring and $M$ a cancellative 
torsion-free semi-normal monoid. Then the following hold.
\begin{enumerate}
\item
$SK_0(R[M])=0$.
\item
If $M$ is free and positive, then $SK_1(R[M])=0$.
\end{enumerate}
\end{lem}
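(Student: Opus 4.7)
My approach handles the two parts by quite different reductions: for~(1) I reduce modulo the nilradical of $R$ to land in a product of monoid algebras over fields and invoke the Gubeladze--Swan theorem, while for~(2) I observe that the hypothesis collapses $R[M]$ to a polynomial ring over the semilocal ring $R$ and invoke Suslin's stability theorem for $K_1$.

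For~(1), let $\mathfrak n$ denote the nilradical of $R$. Since $R$ is Artinian, $\mathfrak n$ is nilpotent and $R/\mathfrak n \cong \prod_i k_i$ is a finite product of fields. The ideal $\mathfrak n R[M]$ is nilpotent in $R[M]$, with quotient $(R/\mathfrak n)[M] \cong \prod_i k_i[M]$, so Lemma~\ref{lem:affred} gives an isomorphism $K_0(R[M]) \xrightarrow{\sim} \prod_i K_0(k_i[M])$. Both rank and determinant are functorial under the surjection $R[M] \twoheadrightarrow (R/\mathfrak n)[M]$, so this isomorphism carries $SK_0(R[M])$ into $\prod_i SK_0(k_i[M])$. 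Because $M$ is cancellative, torsion-free and semi-normal, the Gubeladze--Swan theorem \cite[Corollary~1.4]{Swan} yields $K_0(k_i) \cong K_0(k_i[M]) \cong \Z$, and hence $SK_0(k_i[M]) \subseteq \widetilde K_0(k_i[M]) = 0$ for each $i$. Injectivity of the $K_0$-isomorphism then forces $SK_0(R[M]) = 0$.

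For~(2), writing $M \cong \Z_+^n$ gives $R[M] \cong R[t_1,\ldots,t_n]$. The ring $R$ is Artinian, hence commutative semilocal, hence of Bass stable range one. Suslin's $K_1$-stability theorem for polynomial rings over a ring of small stable range then gives $SL_m(R[t_1,\ldots,t_n]) = E_m(R[t_1,\ldots,t_n])$ for every $m \geq 3$, whence $SK_1(R[t_1,\ldots,t_n]) = 0$. The main subtlety in~(1) is that $\Pic(R[M]) \to \Pic((R/\mathfrak n)[M])$ is generally \emph{not} an isomorphism, so one cannot simply compare the defining sequences $0 \to SK_0 \to \widetilde K_0 \to \Pic \to 0$ at the two levels; the argument bypasses this by working inside $K_0$, where nilpotent invariance does give an isomorphism, and using that $SK_0$ of the reduced target vanishes outright. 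The corresponding reduction fails for~(2) since $K_1$ is not invariant under nilpotent thickenings, which is precisely why Suslin's polynomial-ring stability (available because $R$ is semilocal) is the essential input there.
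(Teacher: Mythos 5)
Your proof is correct, and it is worth recording how it relates to the paper's. For part (1) you follow essentially the same route as the paper --- kill the nilradical and invoke Gubeladze over a product of fields --- but you are more careful about the one point the paper glosses over, namely why $SK_0$ is insensitive to nilpotents: you embed $SK_0$ into $K_0$, where \lemref{lem:affred} gives an honest isomorphism, and use functoriality of rank and determinant; you also quote the Gubeladze--Swan theorem \cite[Corollary~1.4]{Swan} to get the stronger statement $\wt{K}_0(k_i[M])=0$ where the paper cites \cite[Theorem~1.3]{Gub-4} for $SK_0(k_i[M])=0$. For part (2) your route is genuinely different. The paper first reduces to reduced $R$ using $SK_1(A)\cong SK_1(A/I)$ for $I$ nilpotent (Bass, Ch.~IX, Prop.~3.10--3.11) and then applies homotopy invariance of $K_1$ for the regular ring $k_1\times\cdots\times k_m$; you instead stay with the possibly non-reduced Artinian $R$ and combine Suslin's unstable theorem $GL_m(R[t_1,\dots,t_n])=GL_m(R)\,E_m(R[t_1,\dots,t_n])$ for $m\ge\max(3,\dim R+2)=3$ with the vanishing of $SK_1$ for commutative semilocal rings. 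Both arguments work; yours imports a theorem (Suslin's) not otherwise used in the paper, while the paper's stays inside the Bass-style toolkit already assembled here. One small correction to your closing remark: it is only $K_1$ that fails to be nilinvariant, not $SK_1$ --- the discrepancy lives entirely in the units --- so the reduction to the reduced case does \emph{not} fail for $SK_1$; it is exactly what the paper does. Your own argument simply does not need it.
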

\begin{proof}
By Lemma~\ref{lem:affred}, we can assume that $R$ is reduced. 
In this case, we have $R \cong k_1\times \cdots \times k_m$, where each $k_i$
is a field. The assertion (1) now follows \cite[Theorem~1.3]{Gub-4}.
 
Similar to the case of $SK_0(R)$, we have $SK_1(R[M])=SK_1(R_{\rm red}[M])$ by  
\cite[Chapter IX, Proposition 3.10, 3.11]{Bass}). So we can assume
$R$ to be a field.
Since $M \cong \Z^r_+$ by our assumption,
we get $SK_1(R[M]) \cong SK_1(R) = 0$, where the first isomorphism is
by the homotopy invariance. 
\end{proof}

Recall from \cite[\S~14]{Swan} that a ring extension $A \subset B$ is
called an elementary subintegral extension if $B = A[x]$, where
$x^2, x^3 \in A$. We say that $A \subset B$ is a subintegral extension if
it is a filtered union of elementary subintegral extensions.

\begin{lem}\label{lem:sub-int}
Let $A\subset B$ be a subintegral extension. If $SK_0(B)=0$, then $SK_0(A)=0$.
\end{lem}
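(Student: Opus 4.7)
The plan is to first reduce to the case of an elementary subintegral extension $B = A[x]$ with $x^2, x^3 \in A$. Since a subintegral extension is, by definition, a filtered union of elementary ones and algebraic $K$-theory commutes with filtered colimits, this reduction is immediate. I therefore assume $B = A[x]$ with $x^2, x^3 \in A$.

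For this case I would form the conductor Milnor square with $I = (x^2, x^3)$, which is readily verified to be an ideal of both $A$ and $B$ (since $x \cdot x^2, x \cdot x^3 \in I$). The key structural observation is that $B = A + Ax$ forces $B/I \cong (A/I)[\bar x]/(\bar x^2)$, a square-zero extension of $A/I$, and the quotient map $B/I \twoheadrightarrow A/I$ killing $\bar x$ retracts the inclusion $A/I \hookrightarrow B/I$.

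Feeding this Milnor square into \propref{Bass-Milnor} gives the six-term exact sequence
\[
SK_1(B) \oplus SK_1(A/I) \xrightarrow{\rho} SK_1(B/I) \to SK_0(A) \to SK_0(B) \oplus SK_0(A/I) \xrightarrow{\partial} SK_0(B/I).
\]
Because $(A/I)_{\red} = (B/I)_{\red}$ and $SK_0$ is invariant under nilpotent ideals by \lemref{lem:affred}, the map $SK_0(A/I) \to SK_0(B/I)$ is an isomorphism. Combined with the hypothesis $SK_0(B) = 0$, this forces $\ker(\partial) = 0$, so $SK_0(A)$ is identified with the cokernel of $\rho$.

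The whole proof thus reduces to showing $\rho$ is surjective, and for this it is enough that $SK_1(A/I) \to SK_1(B/I)$ already be surjective. This is an instance of the vanishing $SK_1(S, J) = 0$ for any square-zero ideal $J$ in a commutative ring $S$: given $M = I + N \in SL_n(S, J)$ with $n$ large, the condition $\det(M) = 1$ together with $J^2 = 0$ forces $\mathrm{tr}(N) = 0$; elementary operations in $E_n(S, J)$ reduce $N$ to a diagonal trace-zero matrix $\mathrm{diag}(n_1, \ldots, n_k)$ with $n_i \in J$; and because $\prod (1 + n_i) = 1 + \sum n_i = 1$, Whitehead's lemma places $\mathrm{diag}(1 + n_1, \ldots, 1 + n_k)$ in $E_n$. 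The main obstacle is precisely this last vanishing statement; once it is available, everything else is a formal chase in the Bass--Milnor sequence.
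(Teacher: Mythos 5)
Your Milnor-square mechanism for a single elementary extension is sound, but the opening reduction is where the argument breaks. You invoke ``$K$-theory commutes with filtered colimits'' to pass from a general subintegral extension to an elementary one, but the implication runs the wrong way: writing $B=\bigcup_\alpha B_\alpha$ with each $B_\alpha$ an iterated elementary extension of $A$, the hypothesis $SK_0(B)=0$ lives at the \emph{top} of the tower, and continuity only tells you that $\colim_\alpha SK_0(B_\alpha)=0$, i.e.\ that every class in some $SK_0(B_\alpha)$ dies further up. It does not give $SK_0(B_\alpha)=0$ for any intermediate stage, so you are not entitled to ``assume $B=A[x]$'' while keeping the hypothesis $SK_0(B)=0$. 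The repair is available inside your own argument: your exact sequence actually proves the stronger statement that $SK_0(C)\to SK_0(C[x])$ is \emph{injective} for every elementary subintegral extension (once $\rho$ is surjective the connecting map $SK_1(B/I)\to SK_0(A)$ vanishes, so $SK_0(A)\hookrightarrow SK_0(B)\oplus SK_0(A/I)$, and any class killed in $SK_0(B)$ lands in $\ker\bigl(SK_0(A/I)\to SK_0(B/I)\bigr)=0$). Injectivity composes along finite chains and passes to the filtered colimit, giving $SK_0(A)\hookrightarrow SK_0(B)=0$. You should state and prove that version instead.

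Two smaller points. First, $B/I\cong (A/I)[\bar x]/(\bar x^2)$ is false in general (take $x\in A$ already); what is true, and all you need, is that $\bar x$ generates a square-zero ideal of $B/I$ and that $(A/I)_{\red}\to (B/I)_{\red}$ is an isomorphism --- exactly the fact recorded in the proof of \lemref{lem:RemoveSemi} with reference to Swan. From this, nilpotent invariance of $SK_1$ and $SK_0$ for commutative rings (the Bass results already cited in the paper) gives that both $SK_1(A/I)\to SK_1(B/I)$ and $SK_0(A/I)\to SK_0(B/I)$ are isomorphisms, so your hands-on computation of $SK_1(S,J)$ for square-zero $J$, while essentially correct, is re-proving a quoted fact. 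Second, be aware that your route is genuinely different from the paper's: the paper translates $SK_0=0$ into the stable isomorphism $P\oplus A^s\cong \wedge^r(P)\oplus A^t$ via \lemref{lem:SK_0-trivial} and then invokes Swan's Theorem~14.1, which says isomorphisms of projective modules descend along subintegral extensions --- a two-line argument that handles the filtered union for free. Your approach re-derives the $K_0$-descent from the conductor square; it is more self-contained and more visibly $K$-theoretic, but it is also where the colimit subtlety above enters.
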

\begin{proof}
By \lemref{lem:SK_0-trivial}, we need to show that 
$P \oplus A^s \cong \wedge^r(P) \oplus A^t$
for any projective $A$-module $P$ of rank $r \ge 1$.
At any rate, our assumption and  \lemref{lem:SK_0-trivial} together imply that
$P_B := P \otimes_A B$ has the property that
$P_B \oplus B^s \cong \wedge^r(P_B)\oplus B^t$.
Equivalently, we have $(P \oplus A^s)_B \cong (\wedge^r(P) \oplus A^t)_B$.
But this implies that $P \oplus A^s \cong \wedge^r(P) \oplus A^t$
by \cite[Theorem 14.1]{Swan}.
\end{proof}

\begin{lem}\label{lem:sk-pos}
Let $R$ be an Artinian ring and $M$ any monoid. 
We let $N= M\setminus U(M)$, where $U(M)$ is the group of units of 
$M$. If $SK_0(R[N_*])=0$, then $SK_0(R[M])=0$.
\end{lem}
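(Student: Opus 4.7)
The plan is to mirror the strategy of \lemref{lem:RemovePosi}, replacing the long exact sequence of negative $K$-groups with the $6$-term Bass--Milnor sequence for $SK_*$. By \cite[Lemma~6.1]{Gub-7} one has the Milnor square
\begin{equation*}
\xymatrix@C1pc{
R[N_*] \ar[r] \ar[d] & R[M] \ar[d]^{\psi} \\
R \ar[r] & R[U(M)],
}
\end{equation*}
in which the surjection $\psi$ is split by the inclusion $R[U(M)] \inj R[M]$, since the composite $R[U(M)] \inj R[M] \xrightarrow{\psi} R[U(M)]$ is the identity. Consequently $\psi_* \colon SK_0(R[M]) \surj SK_0(R[U(M)])$ is split surjective.

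First I would apply the 6-term Bass--Milnor exact sequence of \propref{Bass-Milnor}, which yields an exact sequence
\[
SK_0(R[N_*]) \to SK_0(R) \oplus SK_0(R[M]) \xrightarrow{\rho} SK_0(R[U(M)]).
\]
Since $R$ is Artinian, it decomposes as a finite product of local rings and therefore satisfies $SK_0(R) = 0$. Combined with the hypothesis $SK_0(R[N_*]) = 0$, the exactness forces $\rho$ to be injective, which in turn implies that $\psi_* \colon SK_0(R[M]) \inj SK_0(R[U(M)])$ is injective. Together with the split surjectivity of $\psi_*$, this gives an isomorphism $\psi_* \colon SK_0(R[M]) \xrightarrow{\simeq} SK_0(R[U(M)])$, reducing the problem to showing $SK_0(R[U(M)]) = 0$.

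For this final step I will use that $U(M)$ is a finitely generated abelian group (since $M$ is finitely generated commutative), and write $U(M) = \Z^r \oplus T$ with $T$ finite. Then $R[U(M)] = R[T][x_1^{\pm},\dots,x_r^{\pm}]$, where $R[T]$ is again Artinian (as a finite $R$-module) and hence a finite product of Artinian local rings $A_j$. For each $A_j$, one has $K_{-1}(A_j) = 0$ (since $\dim A_j = 0$) and $NK_0(A_j) = 0$ ($K_0$-regularity of zero-dimensional rings); iterated application of the Bass fundamental theorem then gives $K_0(A_j) \xrightarrow{\simeq} K_0(A_j[x_1^{\pm},\dots,x_r^{\pm}])$. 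Since $\Spec(A_j[x_1^{\pm},\dots,x_r^{\pm}])$ is connected, the rank map on this $K_0$ is an isomorphism onto $\Z$, so $\wt{K}_0 = 0$ and hence $SK_0 = 0$ for each factor, giving $SK_0(R[U(M)]) = 0$.

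The main obstacle will be handling possible torsion in $U(M)$: in the torsion-free case the Laurent-extension computation can be handled directly using regularity and \lemref{lem:Laurent-K_{-d}}, but when $T \ne 0$ one must first absorb the finite torsion part into the base ring (using that $R[T]$ is Artinian) before exploiting regularity of the Laurent polynomial extension.
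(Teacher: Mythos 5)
Your proof is correct and follows essentially the same route as the paper's: the same Milnor square from Gubeladze, the same six-term Bass--Milnor sequence from \propref{Bass-Milnor}, and the same reduction to showing $SK_0(R[U(M)])=0$ with $U(M)\simeq \Z^r\oplus T$. The only (minor) difference is that the paper disposes of $SK_0(R[U(M)])=SK_0(R[T][\Z^r])$ by citing \lemref{Artin}, whereas you compute it directly from the Bass fundamental theorem together with $K_{-1}$-vanishing and $K_0$-regularity of Artinian rings; both are valid, and your split-surjectivity observation, while harmless, is not needed once injectivity of $\psi_*$ on $SK_0$ is established.
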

\begin{proof}
Since $M$ is finitely generated, we have $U(M)\simeq \Z^r \oplus G$ for
some $r \ge 0$, where $G$ is a finite abelian group.
Under the hypothesis of the lemma, there is a Milnor square of rings as
in ~\eqref{eqn:Unit} (see also \cite[\S~6]{Swan}). 	
Since $R[G]$ is an Artinian ring, we have
$SK_0(R[U(M)]) \cong SK_0(R[G][\mathbb{Z}^r])=0$ by Lemma~\ref{Artin}. 
As $SK_0(R[N_*])=0$, we conclude using \propref{Bass-Milnor} 
that $SK_0(R[M])=0$.
\end{proof}

\section{$SK_0$ and the Levine--Weibel Chow group}
\label{sec:LW-**}
In this final section, we shall first prove \thmref{thm:MT-4}
and then deduce \thmref{thm:MT-2} using \thmref{thm:MT-4} and the
affine Roitman torsion theorem \cite{Krishna-1} 
for the Levine-Weibel Chow group.

\subsection{$SK_0$ of cancellative monoid algebras}\label{sec:Canc-mon}
We shall first prove our main result for the vanishing of $SK_0$ of 
monoids algebras when the underlying monoid is cancellative.
More precisely, we prove:

\begin{lem}\label{lem:sk}
Let $R$ be an Artinian ring and $M$ a commutative cancellative 
monoid. Then $SK_0(R[M])=0$.
\end{lem}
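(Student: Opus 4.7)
The plan is to reduce to the case where $M$ is positive, cancellative, and semi-normal, and then induct on $\rk(M)$ using the face filtration and Milnor squares of Section \ref{sec:Milnor-pos}. First I would invoke \lemref{lem:sk-pos} to reduce to $M$ positive. Next, since $M \subset sn(M)$ is a filtered colimit of elementary subintegral monoid extensions, the ring extension $R[M] \subset R[sn(M)]$ is subintegral, so \lemref{lem:sub-int} further reduces the proof to the case where $M$ is positive, cancellative, and semi-normal.

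I would then induct on $m := \rk(M)$. If $m = 0$, the positive cancellative monoid $M$ has finite group completion, forcing $M = \{0\}$; then $R[M] = R$ and $SK_0(R) = 0$ because the Artinian ring $R$ is a finite product of local Artinian rings, on which every projective module is free. For $m \ge 1$, a secondary induction on $k$ would show $SK_0(A_k) = 0$ for every $0 \le k \le r+1$. Applying \propref{Bass-Milnor} to the Milnor square ~\eqref{eqn:int-sq} and using $SK_0(R) = 0$ together with the secondary inductive hypothesis $SK_0(A_{k-1}) = 0$, the 6-term exact sequence truncates to
\[
SK_1(A_{k-1}) \longrightarrow SK_0(R[N_k]) \longrightarrow SK_0(A_k) \longrightarrow 0,
\]
where $N_k = (\Int(\ov M \cap F_{k-1}) \times T_{F_{k-1}})_*$. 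It therefore suffices to show $SK_0(R[N_k]) = 0$ at each step.

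For $1 \le k \le r$ the face $F_{k-1}$ is proper, so $\rk(N_k) = \dim(F_{k-1}) < m$; since $N_k$ is cancellative, the outer rank-induction hypothesis applied directly to $N_k$ yields $SK_0(R[N_k]) = 0$. The main obstacle is the top stratum $k = r+1$, where $N_{r+1} = (\Int(\ov M) \times t(M))_*$ has rank $m$ and so escapes the rank-induction. My plan for this step is a secondary induction on $|t(M)|$, whose base case $t(M) = 0$ gives $N_{r+1} = \Int(\ov M)_*$, a positive cancellative torsion-free semi-normal monoid, for which $SK_0(R[N_{r+1}]) = 0$ follows immediately from \lemref{Artin}(1). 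For $t(M) \neq 0$, I would peel off the torsion through a further short chain of Milnor squares in the spirit of \lemref{lem:int-square}, exploiting Gubeladze's technique from \cite{Gub-7} and the fact that $R[H]$ is Artinian for any subgroup $H \subseteq t(M)$ to propagate the vanishing of $SK_0$. This torsion-peeling is the step I expect to require the most care, since the outer rank-induction is unavailable and one has to extract the required vanishing from the combinatorial structure of the cone together with its torsion decoration $\{T_F\}_{F \in F(M)}$.
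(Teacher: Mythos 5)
Your reduction to the positive semi-normal case, the stratification by faces via the Milnor squares ~\eqref{eqn:int-sq}, and the use of \propref{Bass-Milnor} to reduce everything to the vanishing of $SK_0(R[(\Int(\ov M\cap F)\times T_F)_*])$ all match the paper's argument. The one structural difference is your outer induction on $\rk(M)$, used to dispose of the proper-face strata; the paper instead treats every stratum (proper or not) by the same torsion-peeling argument, so the rank induction buys nothing once that argument is available --- and, as you correctly observe, it cannot touch the top stratum, where the whole difficulty sits.

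That top stratum is where your proposal has a genuine gap: ``peel off the torsion through a further short chain of Milnor squares in the spirit of \lemref{lem:int-square}, exploiting Gubeladze's technique'' is a pointer to the hard step, not a proof of it. The squares needed are \emph{not} of the type in ~\eqref{eqn:int-sq} (quotients by monoid ideals); they are the two pullback squares from \cite[Proof of Theorem~1.1]{Gub-7}, built around the ring $\Lambda = A+B$ with $A = R[\Z/{n_m}][(L\leftthreetimes\Z_+)\leftthreetimes\prod_{i=1}^{m-1}\Z/{n_i}]$ and $B = R[t^{n_m}, t^{n_m+1}-t,\dots,t^{2n_m-1}-t^{n_m-1}]\subset R[t]$, glued along the surjection $R[\Z_+]\surj R[\Z/{n_m}]$, $t\mapsto x$. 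Two points of substance are then still to be proved: (i) $SK_0(B)=0$, which requires its own conductor-square argument comparing $B$ with $R[t]$ and using that $B/C$ and $R[t]/C$ are Artinian; and (ii) $SK_0(A)=0$, which must come from an induction on the \emph{number of cyclic factors} of $T_F$ (with the torsion-free part $L$ allowed to be arbitrary normal positive), because the peeling replaces $L$ by $L\leftthreetimes\Z_+$ and thus \emph{raises} the rank --- so neither your outer rank induction nor any induction that fixes the torsion-free part can close up here. Your proposed induction on $\lvert t(M)\rvert$ can be made to work only if you formulate the inner statement exactly as the paper does in ~\eqref{eqn:main-tor-free}, quantifying over all finitely generated positive cancellative torsion-free normal $L$ and all Artinian coefficient rings; as written, that formulation and the two computations (i) and (ii) are missing, and they constitute the actual content of the lemma beyond \lemref{Artin}.
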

\begin{proof}
We can assume, using Lemmas~\ref{lem:sub-int} and ~\ref{lem:sk-pos}, 
that $M$ is semi-normal and positive.
If we further assume that $M$ is torsion-free, then 
$SK_0(R[M])=0$ by Lemma~\ref{Artin}. So we can assume that 
$M$ is positive, cancellative and semi-normal but not torsion-free.

We can write $M=sn(M)=\cup_{F(M)}(\Int(\overline{M} \cap F) \times T_F))$
by \lemref{lem:Gub-Semi}. Note that $\Int(n(\overline{M} \cap F)) =
\Int(\overline{M} \cap F)$ by \cite[Lemma~6.6]{Swan}.
We shall prove the lemma by applying \propref{Bass-Milnor} to
	~\eqref{eqn:int-sq}. We note that $A_0 \cong A_1 \cong R$ and 
$A_{r+1} = R[M]$.
Hence, it suffices to show using an induction argument that
$SK_0(A_k) = 0$ for  $1 \le k \le r+1$.

Since  $SK_0(R) = 0$,  the base case for induction is established.
It suffices now to prove the lemma for $A_k$ assuming it holds for 
$A_{k-1}$ for $k \ge 2$.
Using \lemref{lem:int-square} and \propref{Bass-Milnor},
it suffices to show that 
$SK_0(R[(\Int(\overline M \cap F_{k-1}) \times T_{F_{k-1}})_*]) = 0$. 

Now, we observe that $R[(\Int(\overline M \cap F_{k-1}) \times T_{F_{k-1}})_*]
= R[(\Int(\overline M \cap F_{k-1}))_* \leftthreetimes T_{F_{k-1}}]$.
Letting $L = (\Int(\overline M \cap F_{k-1}))_*$, we see that
$L$ is a normal positive cancellative torsion-free monoid
(see \cite[Proposition~2.40]{BG}).
Writing $F_{k-1} = \stackrel{m}{\underset{i = 1}\prod} {\Z}/{n_i}$, it
suffices to show by induction on $m$ that
\begin{equation}\label{eqn:main-tor-free} 
SK_0(R[L\leftthreetimes (\stackrel{m}{\underset{i = 1}\prod} {\Z}/{n_i})]) = 0. 
\end{equation}

If $m = 0$, then ~\eqref{eqn:main-tor-free} is true by our assumption.  
	In general, there is a Milnor square (see 
	\cite[Proof of Theorem~1.1, p.~214]{Gub-7})
	\begin{equation}\label{eqn:main-tor-free-0} 
	\xymatrix@C1pc{
		\Lambda \ar[r] \ar[d] & R[L\leftthreetimes 
		(\stackrel{m}{\underset{i = 1}\prod} {\Z}/{n_i})] \ar[d] \\
		R[\Z_+][L \leftthreetimes 
		(\stackrel{m-1}{\underset{i = 1}\prod} {\Z}/{n_i})] 
\ar@{->>}[r]^-{\pi} &
		R[{\Z}/{n_m}][L \leftthreetimes 
		(\stackrel{m-1}{\underset{i = 1}\prod} {\Z}/{n_i})],}
	\end{equation}
	where
	\begin{enumerate}
		\item
		$\Lambda = A + B$,
		\item
		$A = R[{\Z}/{n_m}][(L \leftthreetimes \Z_+) \leftthreetimes
		(\stackrel{m-1}{\underset{i = 1}\prod} {\Z}/{n_i})]$,
		\item
		$B = R[t^{n_m}, t^{n_m+1}-t, \cdots , t^{2n_m-1}- t^{n_m-1}] 
\subset R[t] \simeq
		R[\Z_+]$ and
		\item
		$\pi$ is induced by $t \mapsto x$ for some generator 
$x \in {\Z}/{n_m}$.
	\end{enumerate}
By induction on $m$ and \propref{Bass-Milnor}, it suffices to show that
$SK_0(\Lambda) = 0$.

We now consider another Milnor square (see 
\cite[Proof of Theorem~1.1]{Gub-7})
\[
\xymatrix@C1pc{
A \ar[r] \ar[d] & \Lambda \ar[d] \\
R \ar[r] & B.}
\]
Using this square and \propref{Bass-Milnor} again,
it suffices to prove that $SK_0(A) = 0 = SK_0(B)$.

Since $L \leftthreetimes \Z_+$ is a finitely generated positive, cancellative 
torsion-free normal monoid, it follows by induction on $m$ that
$SK_0(A) = 0$.  To prove the result for $B$, we can assume $R$ is reduced
by \lemref{lem:affred}.
We can further assume that $R$ is a field. We now observe that 
there is a conductor square
	\[
	\xymatrix{
		B\ar@{->}[r]
		\ar@{->}[d]
		&R[t]
		\ar@{->}[d]
		\\
		B/C \ar@{->}[r]
		&R[t]/C,     
	}
	\]
where $C$ is the conductor ideal of the extension $B\subset R[t]$. 
Being a subring of the integral domain $R[t]$,
$B$ is an integral domain. 
Since $t(t^{n_m}-1)\in C$ is a non-zero divisor, we see that the height of 
$C$ is positive. 	
It follows that $B/C$ and $R[t]/C$ are both Artinian rings.
In particular, $SK_1(R[t]/C) = 0$.
Since $SK_0(B/C) = SK_0(R[t]) = 0$, it follows from
\propref{Bass-Milnor} that $SK_0(B) = 0$.
\end{proof}

\subsection{The final step for \thmref{thm:MT-4}}\label{sec:MT-4**}
Let $M$ be an arbitrary (finitely generated) monoid. Recall from
\cite[\S~15]{Swan} that an ideal $P \subset M$ is called prime if
$P \neq M$ and if $x,y \in M, \ x+y \in P$ implies $x \in P$ or $y \in P$.
This is equivalent to saying that $N = M \setminus P$ is a non-empty submonoid
of $M$. In this case, there are monoid algebra morphisms
$R[N] \to R[M] \to R[N] \cong {R[M]}/{R[P]}$ for any ring $R$ whose
composite is identity.
It is also easy to check that if $\fp$ is a prime ideal of $R[M]$, them
$\fp \cap M$ is a prime ideal of $M$.

An ideal $I \subset M$ is called a radical ideal if every element
$x \in M$ with the property $nx \in I$ for some $n > 0$, belongs to $I$. 
If $I \subset M$ is an ideal, we let $\sqrt{I} = \{x \in M|nx \in I \
\mbox{for \ some} \ n > 0\}$. We call this the radical of $I$. 
It is easy to check that $\sqrt{I}$ is a radical ideal of $M$.
Our key step to finish the proof of \thmref{thm:MT-4} is
the following result which 
generalizes \cite[Lemma~15.6]{Swan} to arbitrary monoids.

\begin{lem}\label{lem:primary}
Let $I \subset M$ be a proper radical ideal in a monoid $M$. Then
there are prime ideals $\fp_1, \ldots , \fp_r$ in $M$ such that
$I = \stackrel{r}{\underset{i =1}\cap} \fp_i$.
\end{lem}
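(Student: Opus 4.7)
The plan is to mimic the classical Noetherian proof that every radical ideal of a commutative Noetherian ring is a finite intersection of prime ideals, adapted to the monoid setting. The key preliminary observation is that $M$ satisfies the ascending chain condition on ideals: by the Hilbert basis theorem, $\Z[M]$ is a Noetherian ring, and the assignment $J \mapsto \Z[J]$ from monoid ideals of $M$ to ring ideals of $\Z[M]$ is order-preserving and injective (since $\Z[J] \cap M = J$), so every ascending chain of monoid ideals stabilizes.

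The first step is to show that for every $x \in M \setminus I$ there exists a prime ideal $\fp$ of $M$ with $I \subseteq \fp$ and $x \notin \fp$. Since $I$ is radical, no $nx$ with $n \geq 1$ lies in $I$. Consider the collection $\sS$ of ideals $J$ of $M$ with $I \subseteq J$ and $\{nx : n \geq 1\} \cap J = \emptyset$; it is nonempty (containing $I$) and has a maximal element $\fp$ by ACC. To see $\fp$ is prime, suppose $a + b \in \fp$ with $a, b \notin \fp$. One checks directly that $\fp \cup (a + M)$ and $\fp \cup (b + M)$ are ideals, and both strictly enlarge $\fp$. By maximality, each meets $\{nx : n \geq 1\}$, yielding equations $n_1 x = a + u$ and $n_2 x = b + v$ with $n_i \geq 1$ and $u, v \in M$. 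Adding them gives $(n_1 + n_2)x = (a + b) + (u + v) \in \fp$, contradicting $\fp \in \sS$. This establishes $I = \bigcap_{\fp \supseteq I} \fp$ where $\fp$ runs over all primes containing $I$, and equivalently $I = \bigcap_{\fp} \fp$ where $\fp$ runs over minimal primes over $I$.

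The final step is to show that the set of minimal primes over $I$ is finite. By the standard Noetherian argument, if the collection $\sF$ of ideals having infinitely many minimal primes is nonempty, by ACC it has a maximal element $J_0$, which cannot itself be prime. Picking $a + b \in J_0$ with $a, b \notin J_0$, the strict enlargements $J_0 \cup (a + M)$ and $J_0 \cup (b + M)$ each admit only finitely many minimal primes. Any minimal prime $\fp$ over $J_0$ must contain $a$ or $b$ by primality, hence must be a minimal prime over one of $J_0 \cup (a + M)$ or $J_0 \cup (b + M)$, yielding only finitely many such $\fp$---a contradiction. The main point requiring care is merely that the standard Noetherian-style reductions go through unchanged in the monoid setting (in particular, that the union of an ideal with a principal ideal $y + M$ is again an ideal, and that maximality plus primality forces the desired prime); no cancellation or torsion-freeness hypothesis enters the argument, so the proof applies to arbitrary finitely generated commutative monoids, extending Swan's lemma as desired.
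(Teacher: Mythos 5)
Your proof is correct, but it follows a genuinely different route from the one in the paper. The paper's proof embeds $M$ multiplicatively into the Noetherian ring $\Z[M]$, takes an irredundant primary decomposition $\Z[I]=\bigcap_{i}Q_i$, sets $\fp_i=\sqrt{Q_i}\cap M$, and then uses $\Z[I]\cap M=I$ together with the radicality of $I$ to conclude that $I=\bigcap_i \fp_i$; in other words, it outsources the decomposition to the Lasker--Noether theorem. You instead import only the ascending chain condition on monoid ideals from $\Z[M]$ (via the order-preserving injection $J\mapsto \Z[J]$) and then run the classical arguments intrinsically in $M$: an ideal maximal among those containing $I$ and disjoint from $\{nx:n\ge 1\}$ is prime, and Noetherian induction bounds the number of minimal primes. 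Both arguments are sound; yours is longer but more self-contained, and it makes transparent that the only input is ACC on ideals, so it would apply to any (not necessarily finitely generated) monoid with that property. Two small points you leave implicit: (i) the passage from ``$I$ is the intersection of all primes containing it'' to ``$I$ is the intersection of its minimal primes'' needs the fact that every prime over $I$ contains a minimal one, which follows from Zorn's lemma once one checks that the intersection of a chain of primes is prime; (ii) both the existence and the finiteness of minimal primes can be obtained at once by observing that a finitely generated monoid has only finitely many prime ideals, because the complement of a prime is an extremal submonoid and any extremal submonoid is generated by the subset of the generators of $M$ that it contains.
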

\begin{proof}
In this proof, we shall use the multiplicative notation for the
monoid operation on $M$.
Since $\Z[I]$ is an ideal of the Noetherian ring $\Z[M]$ (note that $M$
is finitely generated), we have the (irredundant) primary decomposition
$\Z[I] = \stackrel{r}{\underset{i =1}\cap} Q_i$. We let $P_i$ denote the
unique associated prime of $Q_i$ in $\Z[M]$. Recall that if we write
the monoid operation of $M$ multiplicatively, then there
is a multiplicative monoid embedding $M \inj \Z[M]$ which sends $m$ to
$(1 \cdot m)$. We let $\fq_i = Q_i \cap M$
and $\fp_i = P_i \cap M$ via this embedding. 
It is easy to check from the definition of $\Z[M]$ that $I = \Z[I] \cap M$.
We therefore 
get $I = \Z[I] \cap M = \stackrel{r}{\underset{i =1}\cap} (Q_i \cap M) =
\stackrel{r}{\underset{i =1}\cap} \fq_i$.
In particular, we have $I = \stackrel{r}{\underset{i =1}\cap} \fq_i \subset
\stackrel{r}{\underset{i =1}\cap} \fp_i$.
Note that each $\fp_i$ is a prime ideal of $M$ as we already observed
earlier.

Suppose now that there is an element $x \in M$ which lies in
$\stackrel{r}{\underset{i =1}\cap} \fp_i$.
This implies that $x$ lies in $\stackrel{r}{\underset{i =1}\cap} P_i$ inside
$\Z[M]$. Since this intersection is same as $\sqrt{\Z[I]}$
in $\Z[M]$, it follows that $x^m \in \Z[I]$ for some $m > 0$.
Since $x \in M$ and $M$ is multiplicatively closed in $\Z[M]$, 
we also have $x^m \in M$. Consequently, we get $x^m \in \Z[I] \cap M = I$.
But $I$ is a radical ideal of $M$ and hence we must have $x \in I$.
We have therefore shown that $\stackrel{r}{\underset{i =1}\cap} \fp_i 
\subset I$. This proves the lemma.
\end{proof}

The following lemma is elementary.
\begin{lem}\label{lem:ELM-monoid}
Let $M$ be any monoid and $I \subset M$ any ideal. Then 
$R[\sqrt{I}] \subseteq \sqrt{R[I]}$ for any ring $R$.
\end{lem}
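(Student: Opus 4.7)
The plan is to show directly that any element $f \in R[\sqrt{I}]$ satisfies $f^N \in R[I]$ for some $N$, which is exactly the assertion $f \in \sqrt{R[I]}$. Switching to multiplicative notation for $M$, the hypothesis that $x \in \sqrt{I}$ means $x^n \in I$ for some $n \geq 1$.

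First I would write an arbitrary element of $R[\sqrt{I}]$ as a finite sum $f = \sum_{i=1}^{r} a_i x_i$ with $a_i \in R$ and $x_i \in \sqrt{I}$. For each $i$, choose $n_i \geq 1$ with $x_i^{n_i} \in I$, and set $n = \max_i n_i$. Now expand $f^N$ via the multinomial formula. Each monomial appearing in $f^N$ has the shape $c \cdot x_1^{e_1} x_2^{e_2} \cdots x_r^{e_r}$ with $e_1 + \cdots + e_r = N$ and $c \in R$.

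The key observation, which is a pigeonhole argument, is that if $N \geq r(n-1)+1$ then some exponent $e_j$ must satisfy $e_j \geq n \geq n_j$. For that index $j$ we can factor $x_j^{e_j} = x_j^{n_j} \cdot x_j^{e_j - n_j}$, and since $x_j^{n_j} \in I$ and $I$ is an ideal of $M$, the whole monoid element $x_1^{e_1} \cdots x_r^{e_r}$ lies in $I$. Therefore every monomial in $f^N$ lies in $R[I]$, giving $f^N \in R[I]$ as desired.

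There is no real obstacle here; the only mild subtlety is that $R[\sqrt{I}]$ is an $R$-submodule rather than an ideal in the usual ring-theoretic sense, so one must verify the radical statement on arbitrary finite $R$-linear combinations rather than just on the monoid generators (for which $x \in \sqrt{I}$ trivially implies $x^n \in I \subset R[I]$). The multinomial-plus-pigeonhole step above handles exactly this point, and completes the proof.
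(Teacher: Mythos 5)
Your proposal is correct and follows essentially the same route as the paper's proof: expand a general element of $R[\sqrt{I}]$ to a high power by the multinomial theorem and check that every resulting monoid element lands in $I$ because $I$ is an ideal. The only difference is that you make the pigeonhole bound $N \ge r(n-1)+1$ explicit where the paper simply takes $m \ge r m_0$ and calls the verification straightforward.
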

\begin{proof}
As in the proof of \lemref{lem:primary},
we shall use the multiplicative notation for the
monoid structure of $M$. Let $u = a_1x_1 + \cdots + a_rx_r \in R[\sqrt{I}]$
with $a_i \in R$ and $x_i \in \sqrt{I}$ for each $i$.
Since $I \subset M$ is an ideal, we can find $m_0 \gg 0$ such that
$x^m_i \in I$ for all $m \ge m_0$ and all $i \ge 1$.
It is then straightforward to check using the multinomial expansion of $u^m$
that for all $m \ge rm_0$, all $M$-coefficients of $u^m$ will lie in $I$.
That is, if we write $u^m = b_1y_1 + \cdots + b_sy_s$, then each $y_i \in I$
for $1 \le i \le s$. But this implies that $u \in \sqrt{R[I]}$.
\end{proof}

\vskip .3cm

{\bf {Proof of \thmref{thm:MT-4}:}}
We can assume $R$ to be reduced by \lemref{lem:affred}.
We can then further assume that $R$ is a field.
Let $N$ be a cancellative monoid and $I \subset N$ an ideal such that 
$M = N/I$. If $I = N$, then $R[M] \cong R$ and the theorem is 
obvious. So we can assume that $I \subset N$ is a proper ideal.
In particular, $I \cap U(N) = \emptyset$.

Since the image of $R[\sqrt{I}]$ in $R[M]$ is nilpotent 
by \lemref{lem:ELM-monoid}, we can assume that
$I$ is a radical ideal of $N$ by \lemref{lem:affred}.
In this case, \lemref{lem:primary} says that we can write 
$I = \stackrel{r}{\underset{i =1}\cap} \fp_i$, where 
$\fp_1, \ldots , \fp_r$ are prime ideals in $N$.

We shall now prove the theorem by induction on $r \ge 1$.
If $r = 1$, then $I$ is a prime ideal of $N$ so that $J = N \setminus I$
is a submonoid of $N$ and $R[M] \cong R[J]$. Since $N$ is cancellative,
it follows that $J$ is also cancellative. So we are done in this case
by \lemref{lem:sk}.

In general, we let $\fp = \fp_1$ and $\fq = \fp_2 \cap \cdots \cap \fp_r$
with $r \ge 2$. Then $I = \fp \cap \fq$ and $L = \fp \cup \fq$ is a proper
ideal of $N$ since $\fp, \fq \subset N \setminus U(N)$.
Note here that the union of two ideals in a ring is generally not an
ideal, but it is true for ideals in a monoid.

Since ${\fp}/I \xrightarrow{\cong} {L}/{\fq}$, it is easy to check
using exact sequences of the type ~\eqref{eqn:Mon-exact} that the diagram
\begin{equation}\label{eqn:MT-4-0}
\xymatrix@C.8pc{
{R[N]}/{R[I]} \ar[r] \ar[d] & {R[N]}/{R[\fq]} \ar[d] \\
{R[N]}/{R[\fp]} \ar[r] & {R[N]}/{R[L]}}
\end{equation}
is Cartesian (see \cite[Proof of Theorem~15.1]{Swan}).
Furthermore, if we let $N' = N \setminus \fp$ and $S = N' \cap L =
N' \cap \fq$, then $ {R[N']}/{R[S]} \xrightarrow{\cong} {R[N]}/{R[L]}$.
On the other hand, the inclusion $N' \inj N$ takes $S$ into $\fq$ and induces 
a map ${R[N']}/{R[S]} \to {R[N]}/{R[\fq]}$. This shows that the right vertical
arrow in ~\eqref{eqn:MT-4-0} is a split surjection.

By induction on $r$, we see that $SK_0({R[N]}/{R[\fp]}) = 0 =
SK_0({R[N]}/{R[\fq]})$. It follows from \propref{Bass-Milnor} that
the sequence
\[
SK_1({R[N]}/{R[\fp]}) \oplus SK_1({R[N]}/{R[\fq]}) \to
SK_1({R[N]}/{R[L]}) \to SK_0({R[N]}/{R[I]}) \to 0
\]
is exact. Since the right vertical arrow in ~\eqref{eqn:MT-4-0} is a split 
surjective homomorphism of $R$-algebras, it follows that the first arrow from
left in this
exact sequence in surjective. But this implies that
$SK_0(R[M]) = SK_0({R[N]}/{R[I]}) = 0$. This finishes the proof of the
theorem.
$\hfill \square$

\vskip .3cm

\subsection{The Levine-Weibel Chow group}\label{sec:LW*}
Our goal in this subsection is to prove \thmref{thm:MT-2}.
As we explained in Introduction,  
our approach is to use the
vanishing of $SK_0$ of the given monoid algebra
and then use the affine Roitman 
torsion theorem for the Levine-Weibel Chow group from \cite{Krishna-1}.
Before we give the details, we recall the
definition of the Levine-Weibel Chow group from \cite{LW} 
for reader's reference.

Let $k$ be an algebraically closed field of any characteristic.
Let $A$ be a finite type reduced $k$-algebra and let $X = \Spec(A)$ denote
the spectrum of $A$.  We shall say that a point $x \in X$ is regular
if $\sO_{X,x}$ is a regular local ring. We let $X_{\rm reg} \subset X$
denote the regular locus of $X$ so that $x \in X_{\rm reg}$ if and only if
it is a regular point. We let $X_{\rm sing} = X \setminus X_{\rm reg}$ 
denote the singular locus of $X$. A closed subscheme $C \subset X$ is
called a Cartier curve if it is a scheme of pure dimension one
such that the following hold.
\begin{enumerate}
\item
No irreducible component of $C$ lies in $X_{\rm sing}$.
\item
For every $x \in C \cap X_{\rm sing}$, the ideal $I_{C,x}$ of
$C$ in the local ring 
$\sO_{X,x}$ is generated by a regular sequence.
\end{enumerate}

For a Cartier curve $C$, let $k(C,X_{\rm sing})^{\times}$ denote the group of
invertible elements in the ring of total quotients of $C$
which are regular along $C \cap X_{\rm sing}$.

Let $\sZ_0(X)$ denote the free abelian group on the
set of regular closed points of $X$. Given a Cartier curve $C \subset X$
and $f \in k(C,X_{\rm sing})^{\times}$, we have the divisor $\divf_C(f)$ of $f$
in the sense of \cite[\S~1.2]{Fulton} (see also \cite[\S~1]{LW}). 
Since $f$ is regular and
invertible along $X_{\rm sing}$, it follows that $\divf_C(f) \in \sZ_0(X)$.
We let $\CH^{LW}_0(X)$ be the quotient of $\sZ_0(X)$ by the subgroup
$\sR_0(X)$, generated by $\divf_C(f)$, where $C$ runs over all
Cartier curves on $X$ and $f \in k(C,X_{\rm sing})^{\times}$.
We shall use the notations $\CH^{LW}_0(X)$ and $\CH^{LW}_0(A)$ interchangeably. 

When $X$ is regular, $\CH^{LW}_0(X)$ coincides with the classical Chow group of
0-cycles on $X$ (see \cite[Chapter~1]{Fulton}).
This is however not the case when $X$ has singularity.
In this case, it is $\CH^{LW}_0(X)$ which is known to be the correct
Chow group of 0-cycles and is supposed to constitute the 0-cycle part
of the conjectural
full theory of cohomological Chow groups of $X$. Furthermore, it is
directly related to theory of vector bundles on $X$ unlike the classical
homological Chow group $\CH_0(X)$. 

Since the structure sheaf of a regular closed point $x \in X$ has finite
tor-dimension over $X$, it follows that this point has a class $cyc(x)$
in $K_0(X)$. We thus get a cycle class map $cyc: \sZ_0(X) \to K_0(X)$.
Furthermore, it is shown in \cite[Proposition~2.1]{LW} that this map
kills $\sR_0(X)$ so that there is a well-defined cycle class map
\begin{equation}\label{eqn:cycle-class}
cyc: \CH^{LW}_0(X) \to K_0(X).
\end{equation}

We have the following result about this cycle class map which is supposed to
be well known.

\begin{lem}\label{lem:cycle-class-*}
Suppose that $\dim(X) \ge 2$. Then the cycle class map has the factorization
\[
cyc: \CH^{LW}_0(X) \to SK_0(X).
\]
\end{lem}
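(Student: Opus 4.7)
The plan is to verify that for every regular closed point $x \in X$ the class $cyc(x) = [\mathcal{O}_x] \in K_0(X)$ lies in $SK_0(X)$; since $\CH^{LW}_0(X)$ is generated by such classes and both $\rk$ and $\det$ are homomorphisms, this will give the factorization. Observe first that $cyc(x)$ is a well-defined element of $K_0(X)$: the skyscraper sheaf $\mathcal{O}_x$ has finite tor-dimension over $\mathcal{O}_X$ because it is zero away from $x$, while at $x$ the residue field $k(x)$ is resolved by the Koszul complex on a regular system of parameters of the regular local ring $\mathcal{O}_{X,x}$. Hence $\mathcal{O}_x$ represents a perfect complex on $X$.

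The rank condition is essentially automatic. Since $X$ is reduced of dimension at least two, no generic point of $X$ coincides with the closed point $x$, so $(\mathcal{O}_x)_\eta = 0$ at every generic point $\eta$. Since $\rk$ is the locally constant function whose value at $\eta$ is the Euler characteristic of any locally free resolution stalk, we obtain $\rk(cyc(x)) = 0 \in H_0(X)$, so $cyc(x) \in \widetilde K_0(X)$.

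For the determinant, pick an affine open $U \subseteq X_{\mathrm{reg}}$ containing $x$ (possible since the regular locus is open) and lift a regular system of parameters at $x$ to elements $f_1, \dots, f_d \in \Gamma(U, \mathcal{O}_X)$, where $d = \dim \mathcal{O}_{X,x} \ge 2$. After shrinking $U$ we may assume that $V(f_1, \dots, f_d) \cap U = \{x\}$, so that the Koszul complex $K_\bullet(f_1, \dots, f_d)$ is a finite free resolution of $\mathcal{O}_x$ on $U$. Each term $K_i \cong \mathcal{O}_U^{\binom{d}{i}}$ has trivial determinant, so $\det(cyc(x))|_U$ is canonically trivial. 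On $W = X \setminus \{x\}$ the sheaf $\mathcal{O}_x$ vanishes, so $\det(cyc(x))|_W$ is canonically trivial as well. The two trivializations differ on $U \cap W = U \setminus \{x\}$ by a unit $u$; since $\mathcal{O}_{X,x}$ is a regular local ring of dimension $\ge 2$ it has depth $\ge 2$, so by Hartog's lemma both $u$ and $u^{-1}$ extend to regular functions on $U$ whose product is $1$ on the dense open $U \setminus \{x\}$, forcing the extension of $u$ to be a unit on $U$. The trivializations then glue to a global trivialization of $\det(cyc(x))$ on $X$.

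The main obstacle is the gluing step, where the hypothesis $\dim X \ge 2$ enters twice: once to make the Koszul resolution long enough for its alternating determinant to be trivial, and once to provide the depth required to extend units across $\{x\}$. In dimension one this argument collapses, reflecting the fact that for a smooth affine curve of positive genus the cycle class map $\CH^{LW}_0 \to K_0$ factors through $\det \colon K_0 \to \Pic$ nontrivially and so need not land in $SK_0$.
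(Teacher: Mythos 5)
Your proof is correct, but it takes a genuinely different route from the paper's. The paper works with the open complement $U = X \setminus \{x\}$ and the Thomason--Trobaugh localization sequence $K_0(\{x\}) \cong K_0^{\{x\}}(X) \to K_0(X) \to K_0(U)$: since $cyc(x)$ dies in $K_0(U)$, the lemma reduces by naturality of $\rk$ and $\det$ to the injectivity of $H^0(X,\Z) \to H^0(U,\Z)$ and $\Pic(X) \to \Pic(U)$, and the latter is proved via the vanishing of $H^1_{\{x\}}(X, \sO_X^{\times})$, excision to the regular locus, and codimension-$\ge 2$ purity for units and line bundles on a regular scheme. You avoid the localization sequence altogether and instead exhibit the trivializations by hand: a Koszul resolution near $x$ and the zero complex away from $x$, glued by extending the transition unit across $\{x\}$ via Hartogs. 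The codimension-two input is ultimately the same in both arguments --- your statement that every unit on $U \setminus \{x\}$ extends to a unit on $U$ is precisely the degree-$\le 1$ part of the paper's $H^1_{\{x\}}(X,\sO_X^\times)=0$ --- but your version is more elementary and self-contained, at the cost of having to produce the resolution explicitly; the paper's version needs nothing about the resolution beyond its existence. Two small remarks. First, your closing claim that $\dim X \ge 2$ is needed ``to make the Koszul resolution long enough for its alternating determinant to be trivial'' is not quite right: the alternating determinant of a Koszul complex is canonically trivial in any length, since all its terms are free; the hypothesis enters only in the Hartogs step, which is exactly why the argument collapses for curves. Second, both your proof and the paper's silently use that $\dim \sO_{X,x} \ge 2$ for the given regular closed point, which requires every irreducible component through $x$ to have dimension $\ge 2$ rather than merely $\dim X \ge 2$; since the paper makes the same tacit assumption, this is not a gap relative to it.
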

\begin{proof}
Let $x \in X$ be a regular closed point and let $U = \Spec(X) \setminus \{x\}$.
We then have the Thomason-Trobaugh localization exact sequence
\begin{equation}\label{eqn:cycle-class-*-0}
K_0(\{x\}) \xrightarrow{\cong} K^{\{x\}}_0(X) \to K_0(X) \to K_0(U).
\end{equation}

Since the image of the middle arrow is the subgroup generated by 
$cyc(x)$, it suffices to show that the maps
$H^0(X, \Z) \to H^0(U, \Z)$ and $\Pic(X) \to \Pic(U)$ are injective.
The first assertion is obvious. So we need to prove the second assertion.
For this, we let $S = \{x\}$. Using the isomorphism
$H^1(X, \sO^{\times}_X) \cong \Pic(X)$ and the exact sequence
\[
H^1_S(X,  \sO^{\times}_X) \to H^1(X, \sO^{\times}_X) \to H^1(U, \sO^{\times}_U),
\]
it suffices to show that $H^1_S(X,  \sO^{\times}_X) = 0$.
Since $x$ is a regular closed point, we have
$H^1_S(X,  \sO^{\times}_X) \cong H^1_S(X_{\rm reg},  \sO^{\times}_{X_{\rm reg}})$
by excision. So we can assume $X$ is regular.

In this case, we have a long exact sequence
\[
H^0(X, \sO^{\times}_X) \to H^0(U, \sO^{\times}_U) \to
H^1_S(X,  \sO^{\times}_X) \to H^1(X, \sO^{\times}_X) \to H^1(U, \sO^{\times}_U).
\]

Since $X$ is regular and codimension of $S$ is at least two in $X$, 
it is well known that the map
$H^i(X, \sO^{\times}_X) \to H^i(U, \sO^{\times}_U)$ is an isomorphism for
$i \le 1$. We are therefore done.
\end{proof}

The second key ingredient in the proof of \thmref{thm:MT-2} is the following
affine Roitman torsion theorem for 0-cycles. 
This is an old conjecture of Murthy 
\cite{Murthy} and is now a theorem \cite[Corollary~7.6]{Krishna-1}.

\begin{thm}\label{thm:Murthy}
Let $A$ be a reduced affine algebra over an algebraically closed field $k$. 
Then the cycle class map $cyc: \CH^{LW}_0(A) \to K_0(A)$ is injective.
\end{thm}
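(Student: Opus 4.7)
The plan is to prove this injectivity by combining a compactification argument with the classical Roitman torsion theorem for smooth projective varieties. The cases $d = \dim A \leq 1$ are classical (for curves, $\CH^{LW}_0$ is computed by Picard), so I would assume $d \geq 2$. In this range, \lemref{lem:cycle-class-*} tells us that $cyc$ factors through $SK_0(A)$, so the task reduces to showing that $\CH^{LW}_0(A) \to SK_0(A)$ is injective. After decomposing across connected components and using a conductor square to control both $\CH^{LW}_0$ and $SK_0$ along the normalization (in the style of \propref{Bass-Milnor}), one may further assume that $A$ is an integral normal domain.

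Next, I would fix a projective compactification $X = \Spec(A) \hookrightarrow \bar X$ with boundary $D = \bar X \setminus X$, and choose a resolution $f : \tilde X \to \bar X$ that is an isomorphism over the smooth locus of $\bar X$, with $\tilde D = f^{-1}(D)$ and $\tilde U = f^{-1}(X)$. Thus $\tilde X$ is smooth projective, $\tilde U$ is smooth quasi-projective, and $\tilde U \to X$ is proper birational. A Levine--Weibel type moving lemma lets us represent a given class $\alpha \in \CH^{LW}_0(A)$ by 0-cycles supported in the regular locus $X_{\rm reg}$, giving a well-defined pullback $\tilde{\alpha} \in \CH_0(\tilde U)$ that sits inside the localization sequence $\CH_0(\tilde D) \to \CH_0(\tilde X) \to \CH_0(\tilde U) \to 0$. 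Proper-pushforward compatibility of cycle class maps translates the hypothesis $cyc(\alpha)=0$ in $K_0(A)$ into a $K$-theoretic vanishing of a lift of $\tilde{\alpha}$ in $K_0(\tilde X)$ modulo the image of $K_0(\tilde D)$.

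For the torsion part of $\alpha$, Roitman's classical theorem on $\tilde X$ gives injectivity of the Albanese map on $\CH_0(\tilde X)_{\rm tors}$; a boundary-aware refinement using the generalized Albanese of the pair $(\tilde X, \tilde D)$, in the sense of Esnault--Srinivas--Viehweg or Biswas--Srinivas, should reduce the vanishing of the torsion part of $\alpha$ to the vanishing of its class in this generalized Albanese, which in turn is controlled by the $K$-theoretic vanishing. For the torsion-free part, a Bloch--Srinivas decomposition of the diagonal on $\tilde X$, combined with the known structure of $F^d K_0$ of the smooth affine $\tilde U$ (where Bloch's formula $\CH_0 \cong H^d(\sK_d)$ gives a direct comparison with $K_0$), should close the argument.

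The main obstacle I anticipate is the final passage from information on $\tilde X$ back to $\CH^{LW}_0(A)$: rational equivalence on the singular affine $X$ is defined only via Cartier curves that are regular along $X_{\rm sing}$ and whose defining ideals form regular sequences at the singular points, so lifting rational equivalences from $\tilde X$ to such Cartier curves on $X$ is delicate. The tool I would use here is the pro-cdh descent formalism of \cite{KST16} applied to the abstract blow-up square $(\tilde X, \tilde Y) \to (\bar X, Y)$ with $Y$ the singular locus of $\bar X$, which yields a comparison of the Zariski cohomology of the sheaf $\sK_d$ on $X$ (which computes $\CH^{LW}_0(X)$ by Bloch's formula for Levine--Weibel groups) with the corresponding cohomology on the resolution. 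The technical heart of the proof, and the sole obstacle to a clean argument, is showing that the difference between these two cohomology groups is precisely accounted for by the generalized Albanese contribution, so that the $K$-theoretic vanishing forces the Levine--Weibel vanishing.
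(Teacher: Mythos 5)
You should first be aware that the paper does not prove this statement at all: Theorem~\ref{thm:Murthy} is quoted as an external input, namely \cite[Corollary~7.6]{Krishna-1}, which is a full-length paper devoted precisely to resolving Murthy's conjecture. So there is no internal proof to compare your sketch against, and any genuinely complete argument here would itself be a substantial research contribution rather than a routine verification.

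As a proof sketch, your proposal has several concrete gaps. First, the reduction to a normal integral domain via a conductor square does not work: $\CH^{LW}_0$ admits no Mayer--Vietoris or localization sequence along conductor squares, and normalization genuinely changes the group (the paper itself notes $\CH^{LW}_0(k[t^2,t^3]) \cong k$, while the normalization $k[t]$ has trivial Chow group), so injectivity for the normalization says nothing about $A$. Second, you have skipped the standard first step that shapes the whole problem: by Levine and Murthy the kernel of $cyc\colon \CH^{LW}_0(A) \to K_0(A)$ is killed by $(d-1)!$, so the theorem is equivalent to the torsion-freeness of $\CH^{LW}_0(A)$ (hence the name ``affine Roitman torsion theorem''); your entire paragraph on the ``torsion-free part'' is therefore unnecessary, and in any case a Bloch--Srinivas decomposition of the diagonal is a strong hypothesis on $\widetilde{X}$ that is simply not available for a general resolution. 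Third, and most seriously, the step you flag as ``the technical heart'' --- matching rational equivalences given by Cartier curves on the singular affine $X$ with rational equivalences on a resolution of a compactification, and identifying the discrepancy with a generalized Albanese of the boundary pair --- is not a technical obstacle to an otherwise complete argument; it \emph{is} the theorem. Pro-cdh descent in the sense of \cite{KST16} is a statement about pro-systems over infinitesimal thickenings and does not yield the needed comparison of $H^d(-,\sK_d)$ with the resolution in a single stage. The actual proof in \cite{Krishna-1} proceeds quite differently, via a decomposition theorem identifying $\CH^{LW}_0(X)$ with a Chow group with modulus $\CH_0(\widetilde{X}|\widetilde{D})$ on a resolution of a compactification, together with a Roitman torsion theorem for Chow groups with modulus; none of that machinery is visible in your outline. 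In short, the proposal assembles plausible ingredients but is circular at its core.
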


\vskip .3cm

{\bf {Proof of \thmref{thm:MT-2}:}}
We let $X = \Spec(k[M])$. Using \thmref{thm:MT-4} and 
\lemref{lem:cycle-class-*}, it suffices to show that the map
$cyc: \CH^{LW}_0(X) \to K_0(X)$ is injective. But this follows from 
\thmref{thm:Murthy}.
$\hfill \square$

\subsection{A different proof of  \thmref{thm:MT-4} for
pctf  monoids}\label{sec:pctf-sk}
We end our discussion with another proof of \thmref{thm:MT-4}
in the special case in which the underlying monoid is (partially cancellative
and) torsion-free.
Note that \thmref{thm:MT-4} proves the vanishing of $SK_0(R[M])$ for
all partially cancellative monoids which are not necessarily torsion-free.
But we decided to include this different proof in the special case
because it is more $K$-theoretic in nature and crucially uses
negative $K$-theory. Our hope is that this $K$-theoretic approach may
be helpful in future generalizations of \thmref{thm:MT-4}
to more general monoid algebras.

\vskip .3cm


We now begin the proof. We let $M$ be a partially cancellative
torsion-free monoid and $R$ an Artinian ring.
We want to show that $SK_0(R[M]) = 0$.

If $M$ is cancellative, then the result
follows from \lemref{lem:sk}. We can therefore assume that
$M$ is torsion-free but only partially cancellative.  
As in the proof of \lemref{lem:AffMainVans}, we can assume that
$M = N/I$, where $N$ is a cancellative torsion-free monoid.
Associated to the Milnor-square ~\eqref{pc-Milnor}, there exists
a commutative diagram
\begin{equation}\label{eqn:PC-SK}
\xymatrix@C.8pc{
SK_0(R) \oplus SK_0(R[N]) \ar[r]^-{\alpha} \ar[d] &  SK_0(R[M]) \ar[d] & \\
K_0(R) \oplus K_0(R[N]) \ar[r]^-{\beta} &  K_0(R[M]) \ar[r] & K_{-1}(R[I_*]),}
\end{equation}
where the bottom row is exact by Proposition~\ref{prop:Milnor*}. 
The vertical arrows are clearly injective.
Here, the arrows $\alpha$ and $\beta$ are the canonical maps.

Since $R$ is Artinian and
$I_*$ is a cancellative torsion-free monoid,
it follows from Lemmas~\ref{lem:affred} and ~\ref{lem:reg-neg} that
the last term of the bottom exact sequence in ~\eqref{eqn:PC-SK} vanishes.
A diagram chase shows that that $SK_0(R[M])$ lies in  the image of
$\beta$. Since $\wt{K}_0(R)$ is a canonical direct summand of $K_0(R)$,
it follows that the image of $SK_0(R[M])$ in $\wt{K}_0(R[M])$
lies in the image of the map
$\wt{K}_0(R) \oplus \wt{K}_0(R[N]) \to \wt{K}_0(R[M])$.

We now consider the commutative diagram
\begin{equation}\label{eqn:SK-0-*}
\xymatrix@C.8pc{
& SK_0(R) \oplus SK_0(R[N]) \ar[r]^-{\alpha} \ar[d] &  SK_0(R[M]) \ar[d] & \\
\wt{K}_0(R[I_*]) \ar[r] \ar@{->>}[d] & 
\wt{K}_0(R) \oplus \wt{K}_0(R[N]) \ar[r]^-{\beta} \ar@{->>}[d] &  
\wt{K}_0(R[M]) \ar[r] \ar@{->>}[d] & 0 \\
\Pic(R[I_*]) \ar[r] & \Pic(R) \oplus \Pic(R[N]) \ar[r] & \Pic(R[M]) \ar[r] & 0.}
\end{equation}

If $A$ is a ring and $L$ is a projective $A$-module of rank one,
then $[L] - [A] \in \wt{K}_0(A)$. Furthermore, the map
${\rm det}: \wt{K}_0(A) \to \Pic(A)$ sends $[L] - [A]$ to $[L]$. 
It follows that the map $\wt{K}_0(A) \to \Pic(A)$ is surjective. 
We conclude that the lower vertical arrows in ~\eqref{eqn:SK-0-*} 
are all surjective.
Since the two columns on the right are exact and the two lower rows are
also exact, a diagram chase shows that $\alpha$ is surjective.
On the other hand, $SK_0(R) \oplus SK_0(R[N]) = 0$ by
\lemref{lem:sk}. It follows that $SK_0(R[M]) = 0$.
This finishes the proof.
$\hfill \square$

\vskip .4cm


\noindent\emph{Acknowledgments.} 
This project was conceived and a part of the work was done when the authors 
were visiting Hausdorff Institute for Mathematics (HIM), Bonn as part of
institute's $K$-theory program in 2017. The authors thank the
institute and the organizers of the program for invitation and support.
The second author would like to thank Tata Institute and University of Warwick
for invitation and support. He would also like to thank
Govt. of India SERB fellowship for financial support during his stay at
Warwick. This paper was greatly inspired by several works of
Gubeladze, Corti{\~n}as, Haesemeyer, Walker and Weibel on $K$-theory of
monoid algebras. The authors would like to thank the referee for carefully reading
the paper and providing many valuable suggestions to improve its presentation.

\end{document}